\tikzstyle{vertex}=[ circle, fill, draw, inner sep=0pt, minimum size=4pt,]
\tikzstyle{edge}= [thick]
\newtheorem*{cor}{Corollary}
\newtheorem*{lem}{Lemma}
\newtheorem*{prop}{Proposition}
\newtheorem*{ex}{Example}
\theoremstyle{definition} 
\theoremstyle{definition}
\newtheorem{thm}{Theorem}
\newtheorem*{thm*}{Theorem}
\newtheorem*{rem}{Remark}
\newenvironment{pf}{\proof}{\endproof}
\newcounter{cnt}
\newenvironment{enumerit}{\begin{list}{{\hfill\rm(\roman{cnt})\hfill}}{%
\settowidth{\labelwidth}{{\rm(iv)}}\leftmargin=\labelwidth%
\advance\leftmargin by \labelsep\rightmargin=0pt\usecounter{cnt}}}{\end{list}} \makeatletter
\def\mydggeometry{\makeatletter\dg@YGRID=1\dg@XGRID=20\unitlength=0.003pt\makeatother}
\makeatother \theoremstyle{remark}
\numberwithin{equation}{section}
 \DeclareMathOperator{\Ht}{ht}
\newcommand{\ti}[1]{\widetilde{#1}}
\begin{document}

\newcommand{\thmref}[1]{Theorem~\ref{#1}}
\newcommand{\secref}[1]{Section~\ref{#1}}
\newcommand{\lemref}[1]{Lemma~\ref{#1}}
\newcommand{\propref}[1]{Proposition~\ref{#1}}
\newcommand{\corref}[1]{Corollary~\ref{#1}}
\newcommand{\remref}[1]{Remark~\ref{#1}}
\newcommand{\defref}[1]{Definition~\ref{#1}}
\newcommand{\er}[1]{(\ref{#1})}
\newcommand{\id}{\operatorname{id}}
\newcommand{\ord}{\operatorname{\emph{ord}}}
\newcommand{\sgn}{\operatorname{sgn}}
\newcommand{\wt}{\operatorname{wt}}
\newcommand{\tensor}{\otimes}
\newcommand{\from}{\leftarrow}
\newcommand{\nc}{\newcommand}
\newcommand{\rnc}{\renewcommand}
\newcommand{\dist}{\operatorname{dist}}
\newcommand{\qbinom}[2]{\genfrac[]{0pt}0{#1}{#2}}
\nc{\cal}{\mathcal} \nc{\goth}{\mathfrak} \rnc{\bold}{\mathbf}
\renewcommand{\frak}{\mathfrak}
\newcommand{\supp}{\operatorname{supp}}
\newcommand{\Irr}{\operatorname{Irr}}
\newcommand{\psym}{\mathcal{P}^+_{K,n}}
\newcommand{\psyml}{\mathcal{P}^+_{K,\lambda}}
\newcommand{\psymt}{\mathcal{P}^+_{2,\lambda}}
\renewcommand{\Bbb}{\mathbb}
\nc\bomega{{\mbox{\boldmath $\omega$}}} \nc\bpsi{{\mbox{\boldmath $\Psi$}}}
 \nc\balpha{{\mbox{\boldmath $\alpha$}}}
 \nc\bbeta{{\mbox{\boldmath $\beta$}}}
 \nc\bpi{{\mbox{\boldmath $\pi$}}}
  \nc\bvarpi{{\mbox{\boldmath $\varpi$}}}
\newcommand{\tii}{\ti{\cal I}}
\nc\bepsilon{{\mbox{\boldmath $\epsilon$}}}

  \nc\bxi{{\mbox{\boldmath $\xi$}}}
\nc\bmu{{\mbox{\boldmath $\mu$}}} \nc\bcN{{\mbox{\boldmath $\cal{N}$}}} \nc\bcm{{\mbox{\boldmath $\cal{M}$}}} \nc\blambda{{\mbox{\boldmath
$\lambda$}}}

\newcommand{\Tmn}{\bold{T}_{\lambda^1, \lambda^2}^{\nu}}

\newcommand{\lie}[1]{\mathfrak{#1}}
\newcommand{\ol}[1]{\overline{#1}}
\makeatletter
\def\section{\def\@secnumfont{\mdseries}\@startsection{section}{1}%
  \z@{.7\linespacing\@plus\linespacing}{.5\linespacing}%
  {\normalfont\scshape\centering}}
\def\subsection{\def\@secnumfont{\bfseries}\@startsection{subsection}{2}%
  {\parindent}{.5\linespacing\@plus.7\linespacing}{-.5em}%
  {\normalfont\bfseries}}
\makeatother
\def\subl#1{\subsection{}\label{#1}}
 \nc{\Hom}{\operatorname{Hom}}
  \nc{\mode}{\operatorname{mod}}
\nc{\End}{\operatorname{End}} \nc{\wh}[1]{\widehat{#1}} \nc{\Ext}{\operatorname{Ext}}
 \nc{\ch}{\operatorname{ch}} \nc{\ev}{\operatorname{ev}}
\nc{\Ob}{\operatorname{Ob}} \nc{\soc}{\operatorname{soc}} \nc{\rad}{\operatorname{rad}} \nc{\head}{\operatorname{head}}
\def\Im{\operatorname{Im}}
\def\gr{\operatorname{gr}}
\def\mult{\operatorname{mult}}
\def\Max{\operatorname{Max}}
\def\ann{\operatorname{Ann}}
\def\sym{\operatorname{sym}}
\def\loc{\operatorname{loc}}
\def\Res{\operatorname{\br^\lambda_A}}
\def\und{\underline}
\def\Lietg{$A_k(\lie{g})(\bs_\xiigma,r)$}
\def\res{\operatorname{res}}

 \nc{\Cal}{\cal} \nc{\Xp}[1]{X^+(#1)} \nc{\Xm}[1]{X^-(#1)}
\nc{\on}{\operatorname} \nc{\Z}{{\bold Z}} \nc{\J}{{\cal J}} \nc{\C}{{\bold C}} \nc{\Q}{{\bold Q}}
\renewcommand{\P}{{\cal P}}
\nc{\N}{{\Bbb N}} \nc\boa{\bold a}  \nc\bob{\bold b} \nc\boc{\bold c} \nc\bod{\bold d} \nc\boe{\bold e} \nc\bof{\bold f} \nc\bog{\bold g}
\nc\boh{\bold h} \nc\boi{\bold i} \nc\boj{\bold j} \nc\bok{\bold k} \nc\bol{\bold l} \nc\bom{\bold m} \nc\bon{\bold n} \nc\boo{\bold o}
\nc\bop{\bold p} \nc\boq{\bold q} \nc\bor{\bold r} \nc\bos{\bold s} \nc\boT{\bold t} \nc\boF{\bold F} \nc\bou{\bold u} \nc\bov{\bold v}
\nc\bow{\bold w} \nc\boz{\bold z} \nc\boy{\bold y} \nc\ba{\bold A} \nc\bb{\bold B} \nc\bc{\mathbb C} \nc\bd{\bold D} \nc\be{\bold E} \nc\bg{\bold
G} \nc\bh{\bold H} \nc\bi{\bold I} \nc\bj{\bold J} \nc\bk{\bold K} \nc\bl{\bold L} \nc\bm{\bold M}  \nc\bo{\bold O} \nc\bp{\bold
P} \nc\bq{\bold Q} \nc\br{\bold R} \nc\bs{\bold S} \nc\bt{\bold T} \nc\bu{\bold U} \nc\bv{\bold V} \nc\bw{\bold W} \nc\bx{\bold
x} \nc\KR{\bold{KR}} \nc\rk{\bold{rk}} \nc\het{\text{ht }}
\nc\bz{\mathbb Z}
\nc\bn{\mathbb N}

\nc\toa{\tilde a} \nc\tob{\tilde b} \nc\toc{\tilde c} \nc\tod{\tilde d} \nc\toe{\tilde e} \nc\tof{\tilde f} \nc\tog{\tilde g} \nc\toh{\tilde h}
\nc\toi{\tilde i} \nc\toj{\tilde j} \nc\tok{\tilde k} \nc\tol{\tilde l} \nc\tom{\tilde m} \nc\ton{\tilde n} \nc\too{\tilde o} \nc\toq{\tilde q}
\nc\tor{\tilde r} \nc\tos{\tilde s} \nc\toT{\tilde t} \nc\tou{\tilde u} \nc\tov{\tilde v} \nc\tow{\tilde w} \nc\toz{\tilde z} \nc\woi{w_{\omega_i}}
\nc\chara{\operatorname{Char}}
\def\a{\alpha}
\def\l{\lambda}
\def\d{\delta}

\title[Borel--de Siebenthal pairs, Global  Weyl modules and Stanley--Reisner rings]{Borel--de Siebenthal pairs, Global  Weyl modules and Stanley--Reisner rings}
\author{Vyjayanthi Chari}
\address{Department of Mathematics, University of California, Riverside, CA 92521}
\email{chari@math.ucr.edu}
\thanks{V.C. was partially supported by DMS 1303052.}

\author{Deniz Kus}
\address{Mathematisches Institut, Endenicher Allee 60, D-53115 Bonn}
\email{dkus@math.uni-bonn.de}
\thanks{D.K was partially funded under the Institutional Strategy of the University of Cologne within the German Excellence Initiative.}

\author{Matt Odell}
\address{Department of Mathematics, University of California, Riverside, CA 92521}
\email{mattodell10@gmail.com}
\thanks{}

\subjclass[2010]{}
\begin{abstract}
We develop the theory of integrable representations for an arbitrary maximal parabolic subalgebra of an affine Lie algebra. We see that such subalgebras can be thought of as arising in a natural way from a Borel--de Siebenthal pair of semisimple Lie algebras. We see that although there are similarities with the represenation thery of the standard maximal parabolic subalgebra there are also very interesting and non--trivial differences; including the fact that there are examples of non--trivial global Weyl modules which are irreducible and finite--dimensional. We also give a presentation of the endomorphism ring of the global Weyl module; although these are no longer polynomial algebras we see that for certain parabolics these algebras are Stanley--Reisner rings which
are both Koszul and Cohen--Macaualey.
\end{abstract}

\maketitle


\section{Introduction}

The category of integrable representations of the current algebra $\lie g[t]$ (or equivalently the standard maximal parabolic subalgebra in an untwisted affine Lie algebra) has been intensively studied in recent years. This study has interesting combinatorial consequences and connections with the theory of Macdonald polynomials and their generalizations (see for instance \cite{CI14},  \cite{FeM15},\cite{FM14},). In this paper we develop the corresponding theory for an arbitrary maximal parabolic subalgebra of an untwisted affine Lie algebra. We show that such subalgebras can be realized as the set of fixed points of a finite group action on the current algebra; in other words they are examples of equivariant map algebras as defined in \cite{NSS}.  The representation theory of equivariant map algebras has been developed in \cite{FKKS11,FMS15,NSS}. However much of the theory depends on the group acting freely on $\bc$; in which case it is proved that the representation theory is essentially the same as that of the current algebra.  But this is not true for the non--standard parabolics and there are many interesting and non--trivial differences in the representation theory.

 Recall that two important families of integrable representations of the current algebras are the global and local Weyl modules.  The global Weyl modules are  indexed by dominant integral weights $\lambda \in P^+$ and are universal objects in the category. Moreover the ring of endomorphisms $\ba_\lambda$  in this category is commutative. 
It is known through the  work of \cite{CP01} that $\ba_\lambda$ is a polynomial algebra in a finite number of variables depending on the weight $\lambda$ and that it is  infinite--dimensional if $\lambda\ne 0$.  
The local Weyl modules are indexed by dominant integral weights and maximal ideals in the corresponding algebra $\ba_\lambda $  and are known to be finite--dimensional. The work of \cite{CP01, FoL07, Na11} shows that the dimension of the local Weyl  module depends only on the weight, and not on the choice of maximal ideal in $\ba_\lambda,$ and so the global Weyl module is a free $\ba_\lambda$--module of finite rank.  

In this paper we develop the theory of global and local Weyl modules for an arbitrary maximal parabolic. The modules are indexed by dominant integral weights of a semisimple Lie subalgebra $\lie g_0$ of $\lie g$ which is of maximal rank; a particular example that we use to illustrate all our results is the pair $(B_n, D_n)$ which is also an example of a Borel--de Siebenthal pair. We determine a presentation of $\ba_\lambda$ and show that in general $\ba_{\lambda}$ is not a polynomial algebra and that the corresponding algebraic variety is not irreducible. In fact we give necessary and sufficient conditions on  $\lambda$ for  $\ba_\lambda$ to be  finite--dimensional (we prove that it must be of dimension 1). In  particular the associated global Weyl module is finite--dimensional and under further restrictions on $\lambda$ the global Weyl module is also irreducible. We also show that under suitable conditions on the maximal parabolic the algebra $\ba_\lambda$ is a Stanley--Reisner ring  which is both Koszul and Cohen--Macaualey. 

Finally we study the local Weyl modules associated with a mutiple of a fundamental weight. In this case $\ba_\lambda$ is either one--dimensional or a polynomial algebra.  We determine the dimension of the local Weyl modules and prove that it is independent of the choice of a maximal ideal in $\ba_\lambda$. This proves also that in this case the  global Weyl module is a free $\ba_\lambda$--module of finite rank.
 This fact is false for general $\lambda$ and we give an example of this in Section 7. However, we will show in this example that the global Weyl module is a free module for a suitable quotient algebra of $\ba_{\lambda}$, namely the coordinate ring of one of the irreducible subvarieties of $\ba_{\lambda}$.

 This paper is organized as follows:  In Section 2, we recall a result of Borel and de Siebenthal which realizes all maximal proper semisimple subalgebras, $\lie g_0$,  of maximal rank, of a fixed simple Lie algebra $\lie g$ as the set of fixed points of an automorphism of $\lie g.$  We prove some results on root systems that we will need later in the paper, and discuss the running example of the paper, which is the case where $\lie g$ is of type $B_n$, and $\lie g_0$ is of type $D_n.$

In Section 3 we extend the automorphism of $\lie g$ to an automorphism of $\lie g [t].$ We then study the corresponding equivariant map algebra, which is the set of fixed points of this automorphism.  We discuss ideals of this equivariant map algebra, and show that in this case, the equivariant map algebra is not isomorphic to an equivariant map algebra where the action of the group is free, which makes the representation theory much different from that of the map algebra $\lie g [t]$.  We conclude the section by making the connection between these equivariant map algebras and maximal parabolic subalgebras of the affine Kac-Moody algebra.

In Section 4 we  develop the representation theory of $\lie g[t]^\tau$. Following \cite{CFK10,CIK14}, we define the notion of global Weyl modules, the associated commutative algebra and the local Weyl modules associated to maximal ideals in this algebra. In the case of $\lie g[t]$  it was shown in \cite{CP01}  that  the  commutative algebra associated with a global Weyl module is a  polynomial ring in finitely many variables. This is no longer true for $\lie g[t]^\tau$; however in Section 5 we see that modulo the Jacobson radical, the algebra is a quotient of a finitely generated polynomial ring by a squarefree monomial ideal. By making the connection to Stanley--Reisner theory, we are able to determine the Hilbert series.  In the case when $\boa_j^{\vee}(\alpha_0) = 1$ we also determine the Krull dimension, and we give a sufficient condition for the commutative algebra to be Koszul and Cohen-Macaulay.

In Section 6 we examine an interesting consequence of determining this presentation of the commutative algebra which differs from the case of the current algebra greatly.  More specifically we see that under suitable conditions a global Weyl module can be  finite--dimensional and irreducible, and we give necessary and sufficient conditions for this to be the case.  

We conclude this paper by determining the dimension of the local Weyl module in the case of our running example $(B_n, D_n)$ for multiples of fundamental weights and a few other cases.  We also discuss other features not seen in the case of the current algebra. Namely we give an example of a weight where the dimension of the local Weyl module depends on the choice of maximal ideal in $\ba_{\lambda}$ showing that the global Weyl module is not projective and hence not a free $\ba_{\lambda}$--module.

\textit{Acknowledgements: Part of this work was done when the third author was visiting the University of Cologne. He thanks the University of Cologne for excellent working conditions. He also thanks the Fulbright U.S. Student Program, which made this collaboration possible.}


\section{The Lie algebras \texorpdfstring{$(\lie g,\lie g_0)$}{(g,g0)}}\label{section1}

\subsection{} We denote the set of complex numbers, the set of integers, non--negative integers, and positive integers  by $\bc$,  $\bz$, $\bz_+$ and $\bn$ respectively. Unless otherwise stated, all the vector spaces considered in this paper are $\bc$-vector spaces and $\otimes$ stands for $\otimes_\bc$. Given any Lie algebra $\lie a$ we let $\bu(\lie a) $ be the universal enveloping algebra of $\lie a$. We also fix an indeterminate $t$ and let $\bc[t]$ and  $\bc[t,t^{-1}]$ be the corresponding  polynomial ring, respectively  Laurent polynomial ring  with complex coefficients.

\subsection{} Let $\lie g$ be a complex simple finite--dimesional Lie algebra of rank $n$  with a fixed Cartan subalgebra $\lie h$. Let $I=\{1,\dots, n\}$ and fix a set $\Delta=\{\alpha_i: i\in I\}$ of simple roots of $\lie g$ with respect to $\lie h$. Let $R$, $R^+$ be the corresponding  set of roots and positive roots respectively. Given $\alpha\in R$ let $\lie g_\alpha$ be the corresponding root space and $a_i$, $i\in I$ be the labels of the Dynkin diagram of $\lie g$; equivalently the highest root of $R^+$ is $\theta=\sum_{i=1}^na_i\alpha_i$.   Fix a Chevalley basis $\{x^\pm _\alpha,  h_i :\alpha\in R^+, i\in I\}$ of $\lie g$, and set $x^\pm_i=x_{\pm\alpha_i}$.  Let $(\ ,\ )$ be the non--degenerate bilinear form on $\lie h^*$ with $(\theta,\theta)=2$ induced by the restriction of the (suitably normalized) Killing form  of $\lie g$ to $\lie h$.  

Let $Q$ be the root lattice with basis $\alpha_i$, $i\in I$.    Define  $\boa_i: Q\to \bz$, $i\in I$  by requiring  $\eta=\sum_{i=1}^n\boa_i(\eta)\alpha_i ,$ and set $\Ht(\eta)=\sum^n_{i=1}\boa_i(\eta)$. For $\alpha\in R$ set $d_\alpha=2/(\alpha,\alpha)$,\  $\boa_i^{\vee}(\alpha)=\boa_i(\alpha)d_\alpha d_{\alpha_i}^{-1}$ and $h_{\alpha}=\sum_{i=1}^n \boa^{\vee}_i(\alpha)h_i$.  
Let $W$ be the Weyl group of $\lie g$ generated by a set of simple reflections $s_i$, $i\in I$ and fix a set of fundamental weights $\{\omega_i: 1\le i\le n\}$ for $\lie g$ with respect to $\Delta$. 

\subsection{} 
\textit{From now on we fix an element $j\in I$ with $a_j\ge 2$ and also fix $\zeta$ to be  a primitive $a_j$--th root of unity. We remark that if $j$ is such that $a_j = 1$ the algebras studied in this paper are known in the literature as current algebras. As we discussed in the introduction, the representation theory of 
current algebras is well-developed and hence we will assume from now on and usually without mention that $a_j\geq 2$.} The following is well--known (see for instance \cite{Hel}). Set $I(j)=I\setminus\{j\}$. 
\begin{prop}\label{defg0}  The assignment $$x_i^\pm \to x_i^\pm,\ \ i\in I(j) ,\ \ x_j^\pm= \zeta_j^{\pm 1}x_j^\pm,$$ defines an automorphism  $\tau:\lie g\to\lie g$ of  order $a_j$.  Moreover, the set of fixed points  $\lie g_0$  is a semismple subalgebra with Cartan subalgebra $\lie h$ and  $$R_0=\{\alpha\in R: \boa_j(\alpha)\in\{0, \pm a_j\}\},$$ is the set of roots of the pair $(\lie g_0,\lie h)$. The set  $\{\alpha_i: i\in I(j)\}\cup\{-\theta\}$ is a simple system for $R_0$.
\hfill\qedsymbol
\end{prop}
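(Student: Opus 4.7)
The plan is to define $\tau$ globally through the root-space grading rather than only on generators. Since $\boa_j:Q\to\bz$ is additive, the assignment $\tau|_{\lie h}=\id$ and $\tau|_{\lie g_\alpha}=\zeta^{\boa_j(\alpha)}\cdot\id$ for $\alpha\in R$ is manifestly a linear bijection, and it respects the Lie bracket because $\boa_j$ is additive, so the scaling factor is multiplicative on brackets of root vectors while $[x_\alpha,x_{-\alpha}]=h_\alpha\in\lie h$ is automatically fixed. This assignment agrees with the prescribed formulas on Chevalley generators since $\boa_j(\alpha_i)=\delta_{ij}$. The order of $\tau$ divides $a_j$ by construction and equals $a_j$ because $\tau(x_j^+)=\zeta x_j^+$ is already of order exactly $a_j$.

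The fixed-point decomposition reads off directly from the formula: $\lie h$ is fixed, and $\lie g_\alpha\subseteq\lie g_0$ iff $\zeta^{\boa_j(\alpha)}=1$. Since $\zeta$ has order $a_j$ and $|\boa_j(\alpha)|\le\boa_j(\theta)=a_j$ for every $\alpha\in R$, this is equivalent to $\boa_j(\alpha)\in\{0,\pm a_j\}$, yielding the claimed $R_0$. To see that $R_0$ is closed under the reflections $s_\alpha$ for $\alpha\in R_0$, I would compute $\boa_j(s_\alpha\beta)=\boa_j(\beta)-\beta(h_\alpha)\boa_j(\alpha)$ and observe this is again an integer multiple of $a_j$; combined with the bound $|\boa_j|\le a_j$ on $R$, it must lie in $\{0,\pm a_j\}$. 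Hence $R_0$ is a sub root system of $R$, and the decomposition $\lie g_0=\lie h\oplus\bigoplus_{\alpha\in R_0}\lie g_\alpha$ makes $\lie h$ a Cartan subalgebra of $\lie g_0$ (its centralizer in $\lie g_0$ is contained in its centralizer in $\lie g$, namely $\lie h$). Semisimplicity then follows by showing the center is trivial: any central element lies in $\lie h$ and must annihilate all of $R_0$, but $\alpha_i(h)=0$ for $i\in I(j)$ together with $\theta(h)=0$ forces $\alpha_j(h)=0$ via $\theta=\sum_i a_i\alpha_i$ and $a_j\ne 0$, so $h=0$.

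For the simple system $\{\alpha_i:i\in I(j)\}\cup\{-\theta\}\subset R_0$, linear independence is immediate from $\boa_j(-\theta)=-a_j\ne 0$. To express each $\alpha\in R_0$ as a sign-definite integer combination, I would split on $\boa_j(\alpha)$: the case $\boa_j(\alpha)=0$ is direct, since only the $\alpha_i$ with $i\in I(j)$ appear and the sign is inherited from $R^\pm$; the case $\boa_j(\alpha)=a_j$ uses the highest-root property $\theta-\alpha\in Q^+$ with vanishing $\boa_j$-coefficient to write $\alpha=-(-\theta)-\sum_{i\in I(j)}c_i\alpha_i$ with $c_i\ge 0$, a uniformly nonpositive combination; and $\boa_j(\alpha)=-a_j$ is symmetric. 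The main subtlety I anticipate is bookkeeping rather than difficulty: the positive system of $R_0$ induced by this simple system is not the one inherited from $R^+$ --- the roots with $\boa_j(\alpha)=+a_j$, although positive in $R$, are \emph{negative} roots of $R_0$ with respect to $\{\alpha_i\}_{i\in I(j)}\cup\{-\theta\}$.
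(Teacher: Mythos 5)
Your argument is correct, and it is worth noting that the paper itself offers no proof of this proposition: it is stated as well-known with a citation to Helgason and closed with a qed symbol. Your proof is the standard one for such Borel--de Siebenthal type statements: defining $\tau$ globally as the grading automorphism $\tau|_{\lie h}=\id$, $\tau|_{\lie g_\alpha}=\zeta^{\boa_j(\alpha)}\id$ (which sidesteps having to check the Serre relations for the assignment on generators), reading off $R_0$ from $|\boa_j(\alpha)|\le a_j$, and using the highest-root property $\theta-\alpha\in Q^+$ to verify the base. Your closing remark that the induced positive system of $R_0$ is not $R_0\cap R^+$ is exactly the point the paper addresses next in Section~\ref{delta0} by passing to $\Delta_0=-w_\circ^{-1}(\{\alpha_i\}\cup\{-\theta\})$.

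The only step you gloss is ``semisimplicity follows by showing the center is trivial'': trivial center implies semisimple only once you know $\lie g_0$ is reductive. That filler is standard and cheap here --- e.g.\ the Killing form of $\lie g$ restricts nondegenerately to $\lie g_0=\lie h\oplus\bigoplus_{\alpha\in R_0}\lie g_\alpha$ because $R_0=-R_0$ and $\kappa$ pairs $\lie g_\alpha$ with $\lie g_{-\alpha}$, so $\ad:\lie g_0\to\lie{gl}(\lie g)$ has nondegenerate trace form and $\lie g_0$ is reductive; alternatively one checks $\lie h=\operatorname{span}\{h_\alpha:\alpha\in R_0\}$ directly, which is the same computation as your center argument. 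With that one sentence added, the proof is complete.
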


 \begin{rem} In the case when $a_j$ is prime, the pair $(\lie g, \lie g_0)$ is an example of a semisimple Borel--de Siebenthal pair.  In other words, $\lie g_0$ is a maximal proper semisimple subalgebra of $\lie g$ of rank $n$. If $a_j$ is not prime we can find a chain of semisimple subalgebras
$$\lie g_0 \subset \boa_1 \subset \cdots\subset \boa_\ell \subset \lie g,$$
such that the successive inclusions are Borel--de Siebenthal pairs.  We shall be interested in infinite--dimensional analogues of these.
\end{rem}

\subsection{} \label{delta0}For our purposes we will need a different simple system for $R_0$ which we choose as follows. The subgroup of $W$ generated by the simple reflections $s_i$, $i\in I(j)$ is the Weyl group of the semisimple Lie algebra  generated by $\{x_i^\pm: i\in I(j)\}$. 
Let $w_\circ$ be the longest element of  this group.
\begin{lem} The set
 $$\Delta_0 =\{\alpha_i: i\in I(j)\}\cup\  \{w_\circ^{-1}\theta\},$$ is a set of simple roots  for $(\lie g_0,\lie h)$ and the corresponding set  $R_0^+$ of positive roots is contained in $R^+$.
\end{lem}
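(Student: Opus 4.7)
The plan is to verify directly that $\Delta_0$ is a simple system by checking that every root in $R_0$ decomposes as a non-negative or non-positive integer combination of its elements, and simultaneously to observe that all such non-negative combinations lie in $R^+$.

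The first step is a bookkeeping observation: the simple reflections $s_i$ with $i\in I(j)$ never alter the coefficient of $\alpha_j$ of any element of $\lie h^*$, hence neither does $w_\circ$. In particular $\boa_j(w_\circ^{-1}\theta)=\boa_j(\theta)=a_j$, which already shows $w_\circ^{-1}\theta\in R^+$ and guarantees linear independence of $\Delta_0$ (the $\alpha_i$, $i\in I(j)$, are linearly independent, and $w_\circ^{-1}\theta$ contributes a nonzero $\alpha_j$-component). I would also record that $w_\circ$, being the longest element of $W(I(j))$, sends each $\alpha_i$ with $i\in I(j)$ to $-\alpha_{\sigma(i)}$ for some permutation $\sigma$ of $I(j)$.

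Next, I would partition $R_0$ according to $\boa_j(\alpha)\in\{0,\pm a_j\}$ given by \propref{defg0}. When $\boa_j(\alpha)=0$, $\alpha$ is a root of the root subsystem spanned by $\{\alpha_i:i\in I(j)\}$, so it is $\pm$ a non-negative integer combination of these simple roots alone. When $\boa_j(\alpha)=a_j$, necessarily $\alpha\in R^+$; apply $w_\circ$ to get another positive root $w_\circ\alpha$ with the same $\alpha_j$-coefficient $a_j$. Since $\theta$ is the highest root, $\theta-w_\circ\alpha$ is a non-negative $\bz$-combination of $\{\alpha_i:i\in I\}$, and since its $\alpha_j$-coefficient vanishes one has $\theta-w_\circ\alpha=\sum_{i\in I(j)} c_i\alpha_i$ with $c_i\in\bz_{\ge 0}$. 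Applying $w_\circ^{-1}$ and using $w_\circ^{-1}\alpha_i=-\alpha_{\sigma^{-1}(i)}$ yields
\[
\alpha \;=\; w_\circ^{-1}\theta \;+\; \sum_{i\in I(j)} c_i\,\alpha_{\sigma^{-1}(i)},
\]
a non-negative integer combination of $\Delta_0$. The case $\boa_j(\alpha)=-a_j$ reduces to the previous one by sending $\alpha\mapsto-\alpha$.

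This simultaneously establishes both assertions. Since every element of $R_0$ is a non-negative or non-positive $\bz$-combination of $\Delta_0$ and $\Delta_0$ is linearly independent, $\Delta_0$ is a simple system for $R_0$. Moreover the positive roots produced in the analysis are either $\sum_{i\in I(j)} c_i\alpha_i$ (with zero $\alpha_j$-coefficient and all other coefficients non-negative) or $w_\circ^{-1}\theta+\sum_{i\in I(j)} c_i\alpha_i$ (with $\alpha_j$-coefficient $a_j>0$), and in both cases the result lies in $R^+$, giving $R_0^+\subseteq R^+$. The one place requiring care — the main obstacle — is the transfer-of-highest-root argument: one must be sure that the intermediate root $w_\circ\alpha$ is still positive and has $\alpha_j$-coefficient equal to $a_j$, so that $\theta-w_\circ\alpha$ is expressible using only the $\alpha_i$ with $i\in I(j)$; after that the inversion by $w_\circ^{-1}$ is formal.
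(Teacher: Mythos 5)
Your proof is correct, but it takes a genuinely different route from the paper's. The paper's proof is a two-line structural argument: it observes that $w_\circ\alpha_i\in\{-\alpha_p:p\in I(j)\}$ for $i\in I(j)$, so that $\Delta_0=-w_\circ^{-1}\bigl(\{\alpha_i:i\in I(j)\}\cup\{-\theta\}\bigr)$, and then invokes the general fact that the image of a simple system under an element of the Weyl group of $R_0$ composed with $-1$ is again a simple system; positivity of $\Delta_0$ then follows from the preserved $\alpha_j$-coefficient. You instead verify the base axioms by hand: linear independence plus an explicit one-signed integral decomposition of every element of $R_0$, obtained in the crucial case $\boa_j(\alpha)=a_j$ by comparing $w_\circ\alpha$ with the highest root $\theta$ and transporting back by $w_\circ^{-1}$. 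Both arguments hinge on the same two facts (that $w_\circ$ permutes and negates the $\alpha_i$, $i\in I(j)$, and that $w_\circ$ fixes the $\alpha_j$-coefficient), but where the paper leans on the abstract transformation property of simple systems, you substitute the highest-root domination property of $\theta$. What your version buys is that the inclusion $R_0^+\subset R^+$ falls out explicitly from the decompositions, whereas the paper only exhibits $\Delta_0\subset R^+$ and leaves the (easy, but unstated) step that a root of $R$ which is a non-negative combination of elements of $R^+$ lies in $R^+$ to the reader; the cost is length. One small point worth making explicit in your write-up: the criterion you use --- linearly independent subset of the root system with every root a one-signed integral combination implies base --- is standard but should be cited or stated, since it is the load-bearing abstract input replacing the paper's appeal to Weyl-group invariance of simple systems.
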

\begin{pf} Since $w_\circ$ is the longest element of the Weyl group generated by $s_i$, $i\in I(j)$, it follows that  for $i\in I(j)$, $$w_\circ\alpha_i\subset\{-\alpha_p: p\in I(j)\}.$$ Hence $$\Delta_0=- w_\circ^{-1}\left(\{\alpha_i: i\in I(j)\}\cup\{-\theta\}\right).$$ Since $w_\circ$ is an element of the Weyl group of $\lie g_0$ it follows from Proposition \ref{defg0} that $\Delta_0$ is a simple system for $R_0$. Moreover $w_\circ^{-1}\theta\in R^+$ since $w_\circ\alpha_j\in R^+$ and $\boa_j(\theta)=a_j$. Hence $\Delta_0\subset R^+$ thus proving the Lemma.
\end{pf} 
 Let $Q_0$ be the weight lattice of $\lie g_0$ determined by $\Delta_0$; clearly $Q_0\subset Q$  and set $Q_0^+=Q_0\cap Q^+$. Then $Q_0^+$ is properly contained in $Q^+$ and we see an example of this at the end of this subsection.

\begin{rem}\label{rem1r} We isolate some immediate consequences of the Lemma which we will  use repeatedly.  From now on we set $\alpha_0=w_\circ^{-1}\theta$,\ $x_{0}^{\pm}=x_{\alpha_0}^{\pm}$ and $h_0=h_{\alpha_0}$.   Then,
\begin{enumerit}
\item[(i)] $\alpha_0$ is a long root.
\item[(ii)] $(\alpha_0,\alpha_i)\le 0$ if $i\in I(j)$ and since  $\alpha_0\in R^+$ it follows that $(\alpha_0,\alpha_j)>0.$
\item[(iii)] $\boa_j(\alpha_0)=a_j$.
\item[(iv)] If $\alpha\in R_0^+$  is such that $\boa_j(\alpha)=a_j$ and $\alpha\ne \alpha_0$, then $\Ht\alpha>\Ht\alpha_0$.
\end{enumerit}
\end{rem}
\begin{ex}
Consider the example of the Borel-de Siebenthal pair $(B_n, D_n),$ so $j=n.$  Recall that the positive roots of $B_n$ are of the form
$$ \alpha_{r,s}:=\alpha_r+\cdots+\alpha_s,\quad \alpha_{r,\overline{s}}:=\alpha_r+\cdots+\alpha_{s-1}+2\alpha_s+\cdots+2\alpha_n.$$  Moreover, $\theta = \alpha_{1,\overline{2}}$ and so $a_n = 2.$ In this case, $\lie g_0$ is of type $D_n$ and $\alpha_0=\alpha_{n-1}+2\alpha_n$. The simple system for $D_n$ is $\Delta_0 = \{ \alpha_1,\dots,\alpha_{n-2},\alpha_{n-1},\alpha_0\}$ ($\alpha_0$ and $\alpha_{n-1}$ correspond to the spin nodes) and the root system for $D_n$ is the set of all long roots of $B_n.$  We note  that $\alpha_n \in Q^+ \setminus Q^+_0$ as mentioned earlier in this section.
\end{ex}
\subsection{} For $1\le k<a_j$ set $$R_k=\{\alpha\in R: \boa_j(\alpha)\in\{k, -a_j+k\}\},\ \ \lie g_k=\bigoplus_{\alpha\in R_k}\lie g_\alpha.$$ Equivalently $$\lie g_k=\{x\in\lie g: \tau(x)=\zeta^k x\}.$$ Setting $R_k^+=R_k\cap R^+$, we observe that \begin{equation}\label{relok} [x_{0}^+, R_k^+]=0,\ \ 1\le k<a_j.\end{equation}
\begin{prop}\label{facts} We have,

\begin{enumerit}
\item [(i)] $\lie g_0=[\lie g_{1}, \lie g_{a_j-1}]$.
\item[(ii)] For all $1\leq k<a_j$ the subspace $\lie g_k$ is an irreducible $\lie g_0$--module.
\item [(iii)]For all $0\le m<k< a_j$, we have $\lie g_k=[\lie g_{k-m}, \lie g_m]$.
\end{enumerit}
\end{prop}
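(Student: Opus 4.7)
I would prove the three parts in the order (ii), (iii), (i), since the irreducibility statement from (ii) is the key input used in the others.

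For (ii), my approach would exploit that $\lie g_0$ contains the Levi subalgebra $\lie g_{I(j)}$ generated by $\{x_i^\pm : i \in I(j)\}$ together with the extra root vectors $x_{\pm\alpha_0}$. Under $\lie g_{I(j)}$, the space $\lie g_k$ splits as $\lie g_k^+ \oplus \lie g_k^-$, where $\lie g_k^+ = \bigoplus_{\alpha \in R^+,\ \boa_j(\alpha) = k} \lie g_\alpha$ and $\lie g_k^- = \bigoplus_{\alpha \in R^+,\ \boa_j(\alpha) = a_j - k} \lie g_{-\alpha}$. These two summands are graded pieces of the nilpotent radical (resp.\ its opposite) of the maximal parabolic of $\lie g$ obtained by removing $\alpha_j$, and by the standard result on parabolic gradings they are irreducible $\lie g_{I(j)}$-modules. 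For a nonzero $\lie g_0$-submodule $V \subseteq \lie g_k$, the $\lie h$-weight decomposition forces $V = (V \cap \lie g_k^+) \oplus (V \cap \lie g_k^-)$, and each intersection is either $0$ or $\lie g_k^\pm$ by $\lie g_{I(j)}$-irreducibility. To conclude, I would verify that $x_{\pm\alpha_0}$ exchanges the two pieces nontrivially: since $(\alpha_0, \alpha_j) > 0$ by the preceding remark, the $\alpha_j$-string through $\alpha_0$ has length at least two, which propagates to give roots $\alpha \in R^+$ with $0 < \boa_j(\alpha) < a_j$ and $\alpha_0 - \alpha \in R$.

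For (iii), the case $m = 0$ is immediate because $\lie h \subseteq \lie g_0$ and $[\lie h, \lie g_\alpha] = \lie g_\alpha$ for every nonzero root $\alpha$, so $[\lie g_0, \lie g_k] = \lie g_k$. For $m \ge 1$, the Jacobi identity shows $[\lie g_{k-m}, \lie g_m]$ is a $\lie g_0$-submodule of $\lie g_k$, hence is $0$ or $\lie g_k$ by (ii). To rule out vanishing, I would pick any $\gamma \in R^+$ with $\boa_j(\gamma) = k$ and use the standard chain realization $\gamma = \alpha_{i_1} + \cdots + \alpha_{i_h}$ with all partial sums in $R^+$; since $\boa_j$ takes each value $0, 1, \ldots, k$ along the chain, a partial sum $\alpha \in R^+$ with $\boa_j(\alpha) = k - m$ exists, and a small combinatorial adjustment of the chain ensures the complement $\beta = \gamma - \alpha$ is also in $R^+$, yielding $[x_\alpha, x_\beta] \ne 0$.

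For (i), the subspace $J := [\lie g_1, \lie g_{a_j - 1}]$ is a nonzero ideal of $\lie g_0$ since it contains $h_j = [x_j^+, x_j^-]$. To show $J = \lie g_0$, I would work with the $\kappa$-orthogonal complement $J^\perp$ in $\lie g_0$; non-degeneracy of the Killing form on the semisimple $\lie g_0$ reduces the question to showing $J^\perp = 0$. Using invariance of $\kappa$ on $\lie g$ and the non-degenerate pairing $\lie g_1 \times \lie g_{a_j-1} \to \bc$ induced by $\kappa$, one identifies $J^\perp = \lie k := \{x \in \lie g_0 : [x, \lie g_1] = 0\}$. An induction on $\ell$ based on (iii) with $m = 1$ (so $\lie g_{\ell+1} = [\lie g_1, \lie g_\ell]$) and the Jacobi identity then propagates $[\lie k, \lie g_1] = 0$ to $[\lie k, \lie g_\ell] = 0$ for every $\ell \in \{1, \ldots, a_j - 1\}$; combined with $[\lie k, \lie g_0] \subseteq \lie k$, this makes $\lie k$ an ideal of $\lie g$. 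Simplicity of $\lie g$ together with $\lie k \subseteq \lie g_0 \subsetneq \lie g$ then forces $\lie k = 0$, so $J = \lie g_0$. The main obstacle throughout is (ii), particularly verifying that $x_{\pm\alpha_0}$ acts nontrivially across the two pieces $\lie g_k^\pm$; this requires a detailed $\alpha_0$-string analysis for which the structural properties of $\alpha_0$ in the preceding remark are essential.
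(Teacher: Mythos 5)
Your route is genuinely different from the paper's, and where it is complete it is arguably cleaner. For (ii) the paper cites Kac's Proposition 8.6 when $a_j=2$ and falls back on a case-by-case inspection of the exceptional types when $a_j\ge 3$ (locating the unique maximal-height root $\theta_k\in R_k^+$ and comparing dimensions), whereas your argument via the irreducibility of the graded pieces of the nilradical of the maximal parabolic attached to $j$ is uniform in type. Your proof of (i) by Killing-form orthogonality, identifying $[\lie g_1,\lie g_{a_j-1}]^{\perp}$ with the centralizer of $\lie g_1$ in $\lie g_0$ and showing that this centralizer is an ideal of the simple algebra $\lie g$, is correct but heavier than the paper's: the paper only notes that $[\lie g_1,\lie g_{a_j-1}]$ is an ideal of the semisimple $\lie g_0$ containing $h_j$, and that $\ad h_j$ is nonzero on some simple root vector in each simple ideal of $\lie g_0$, so the ideal meets (hence contains) every simple factor. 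Note also that the paper's (i) uses neither (ii) nor (iii), while yours needs (iii) with $m=1$; your ordering (ii), (iii), (i) keeps this non-circular.

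Two steps are genuinely incomplete. In (ii), after reducing to $V\in\{\lie g_k^+,\lie g_k^-\}$, what you must produce for \emph{every} $1\le k<a_j$ is a pair $\alpha,\beta\in R^+$ with $\boa_j(\alpha)=k$, $\boa_j(\beta)=a_j$ and $\beta-\alpha\in R$. The $\alpha_j$-string through $\alpha_0$ only yields this for $k=1$ and $k=a_j-1$ (take $\beta=\alpha_0$ and $\alpha=\alpha_j$, resp.\ $\alpha=\alpha_0-\alpha_j$); that covers $a_j\le 3$ but not $F_4$, $E_7$ or $E_8$, where $a_j$ can be $4$, $5$ or $6$, and "propagates" is carrying the entire burden. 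A genuine fix: let $\alpha$ have maximal height among positive roots with $\boa_j(\alpha)=k$; maximality forces $(\alpha,\alpha_i)\ge 0$ for $i\ne j$, hence (by connectedness of the Dynkin diagram) $\boa_i(\alpha)>0$ for all $i$, hence $(\theta,\alpha)>0$ and $\theta-\alpha\in R^+$ with $\boa_j(\theta-\alpha)=a_j-k$; then $[x^-_\theta,\lie g_\alpha]\ne 0$ with $x^-_\theta\in\lie g_0$ does the job. In (iii), your nonvanishing argument does not work as stated: a partial sum of a chain $\gamma=\alpha_{i_1}+\cdots+\alpha_{i_h}$ is a root, but its complement in $\gamma$ generally is not, and the promised "small combinatorial adjustment" is not a standard fact. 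This is precisely the point the paper itself settles only "by inspection," so you have relabelled that gap rather than closed it.
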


\begin{proof} Each connected component of the Dynkin diagram of the semisimple algebra $\lie g_0$ contains some  simple root $\alpha_i$ with  $\alpha_i(h_j)<0$. Since $0\neq h_j=[x^+_{j}, x^{-}_{_j}]\in [\lie g_1, \lie g_{a_j-1}]$ it follows that  $[\lie g_{1}, \lie g_{a_j-1}]$ intersects each simple ideal of $\lie g_0$ non--trivially, which proves (i). 

If $a_j=2$, the proof of the irreducibility in part (ii) of the proposition can be found in \cite[Proposition 8.6]{K90}. 
If  $a_j\geq 3$ then $\lie g$ is of exceptional type and the proof is done in a case by case fashion. One  inspects the set of roots to notice that for $1\le k<a_j$ there exists a unique root $\theta_k\in R^+_k$  such that $\Ht\theta_k$ is maximal. This means that $x_{\theta_k}^+$ generates an irreducible $\lie g_0$--module and a calculation proves that the dimension of this module is precisely $\dim \lie g_k$ 
and establishes part (ii).
Part (iii) is now immediate if we prove that the $\lie g_0$--module $[\lie g_{k-m},\lie g_{m}]$ is non--zero and this is again proved by inspection. We omit the details.
\end{proof}

\label{proptheta}Part (ii) of the proposition implies that  $R_k^+$ has a unique element $\theta_k$ such that  the following holds: \begin{equation}\label{22}(\theta_k,\alpha_i)\ge 0\ \  {\rm{and}} \ [x_i^+, \lie g_{\theta_k}]=0, \ \ \ i\in I(j)\cup\{0\}. \end{equation}  Since $\theta_k\ne\theta$ it is immediate that $$[x_j^+, \lie g_{\theta_k}]\ne 0,\ \ i.e.,\ \  \theta_k+\alpha_j\in R^+.$$ Notice that $x_{\theta_k}^-\in\lie g_{a_j-k}$ and $[x_i^-, x_{\theta_k}^-]=0$ for all $i\in I(j)\cup\{0\}$.
 Moreover  \begin{equation}\label{alli} \boa_i(\theta_k)>0,\ \ i\in I,\ \ 1\le k<a_j.\end{equation}  To see this  note that  the set $\{i:\boa_i(\theta_k)=0\}$ is contained in $I(j)$. Since $R$ is irreducible there must exist $i, p\in I $ with $\boa_i(\theta_k)=0$ and $\boa_p(\theta_k)>0$ and $(\alpha_i,\alpha_p)<0$. It follows that $(\theta_k, \alpha_i)<0$ which contradicts \eqref{22}.  As a consequence of \eqref{alli} we get, 
\begin{equation}\label{diff} (\theta,\theta_k)>0,\ \ 1\le k<a_j,\ \ {\rm{and\  hence}}\ \ \theta-\theta_k\in R^+_{a_j-k}.\end{equation}
 Finally, we note that  since $(\theta_k+\alpha_j,\alpha_0)=(\theta_k,\alpha_0)+(\alpha_j,\alpha_0)>0$ (see the Remark in Section \ref{delta0})  we now have $$\theta_k+\alpha_j-\alpha_0\in R,\ \ k\neq a_j-1,\ \ \theta_{a_j-1}+\alpha_j-\alpha_0\in R_0^+\cup\{0\}.$$

\begin{ex}
In the  case of $(B_n, D_n)$,  we recall $a_n=2$. In this case, $R_1$ is the set of all short roots of $B_n,$ and  $\theta_1=\alpha_{1}+\cdots+\alpha_n$. When $n \geq 4$, $\lie g_1$ is the natural representation of $D_n$.  When $n=3$, $\lie g_1$ is the second fundamental representation of $A_3$.
\end{ex}


\section{ The algebras \texorpdfstring{$(\lie g[t], \lie g[t]^\tau)$}{(g[t],g[t]{tau})}}\label{section2} In this section we define the current algebra version of the pair $(\lie g,\lie g_0)$; namely we extend the automorphism $\tau$ to the current algebra and study its fixed points. The fixed point algebra is an example of an equivariant map algebra studied in \cite{NSS}. We show that our examples are particularly interesting since they can also be realized as maximal parabolic subalgebras of affine Lie algebras. We also show that our examples never arise from a free action of a finite abelian  group on the current algebra. This fact makes the study of its representation theory quite different from that of the usual current algebra.

\subsection{}\label{grad} Let $\lie g[t]=\lie g\otimes\bc[t]$ be the Lie algebra with the Lie bracket given by extending scalars. Recall the automorphism $\tau: \lie g\to \lie g$ defined in Section \ref{section1}. It  extends to an automorphism of $\lie g[t]$ (also denoted as $\tau$) by $$\tau(x\otimes t^r)=\tau(x)\otimes \zeta^{-r}t^r,\ \ x\in\lie g,\ \ r\in\bz_+.$$ Let $\lie g[t]^\tau$ be the subalgebra of fixed points of $\tau$; clearly $$\lie g[t]^\tau=\bigoplus_{k=0}^{a_j-1}\lie g_k\otimes t^{k}\bc[t^{a_j}].$$ Further, if we regard $\lie g[t]$ as a $\bz_+$--graded Lie algebra by requiring the grade of $x\otimes t^r$ to be $r$ then $\lie g[t]^\tau$ is also a $\bz_+$--graded Lie algebra, i.e.,
$$\lie g[t]^\tau=\bigoplus_{s\in \bz_+} \lie g[t]^\tau[s].$$
A graded representation of $\lie g[t]^\tau$ is a $\bz_+$--graded vector space $V$ which admits a compatible
Lie algebra action of $\lie g[t]^\tau$, i.e.,
$$V=\bigoplus_{s\in \bz_+}V[s],\ \ \lie g[t]^\tau[s] V[r]\subset V[r+s],\ r,s\in \bz_+.$$

\subsection{} Given $z\in\bc$, let $\ev_z: \lie g[t]\to \lie g$ be defined by $\ev_z(x\otimes t^r)=z^rx$, $x\in\lie g$, $r\in\bz_+$. It is easy to see that  \begin{equation}\label{diffquot}\ev_0(\lie g[t]^\tau)=\lie g_0,\ \ \ev_z(\lie g[t]^\tau)=\lie g,\ \ z\ne 0.\end{equation} More generally, one can construct ideals of finite codimension in $\lie g[t]^\tau$ as follows. Let $f\in\bc[t^{a_j}]$ and $0\le k<a_j$. The ideal $\lie g\otimes t^kf\bc[t]$ of $\lie g[t]$ is of finite codimension and  preserved by $\tau$.  Hence, $\lie i_{k,f}=(\lie g\otimes t^kf\bc[t^{a_j}])^{\tau}$ is an ideal of finite codimension in $\lie g[t]^\tau$. Notice that $$\ker \ev_0 \cap \lie g[t]^\tau=\lie i_{1,1},\ \ \ker\ev_z \cap \lie g[t]^\tau=\lie i_{0, (t^{a_j}-z^{a_j})}.$$
 We now prove,
\begin{prop} \label{fincodim} Let $\lie i$ be a non--zero ideal  in $\lie g[t]^\tau$. Then there exists $0\le k<a_j$ and $f\in\bc[t^{a_j}]$ such that $\lie i_{k,f}\subset\lie i$. In particular, any non--zero ideal in $\lie g[t]^\tau$ is of finite codimension.
\end{prop}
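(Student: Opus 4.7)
My plan is to use the $\lie h$-weight decomposition of $\lie g[t]^\tau$ to extract a root-vector element of $\lie i$, amplify its polynomial factor via $h_\alpha \otimes t^{a_j}$, and then sweep around the cyclic $\bz/a_j$-grading using \propref{facts}.

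Since $\lie h \otimes 1 \subset \lie g_0 \otimes 1 \subset \lie g[t]^\tau$, the adjoint action of $\lie h$ preserves $\lie i$ and decomposes it into weight spaces; by a standard Vandermonde argument, each weight component of any element of $\lie i$ lies in $\lie i$. The weight spaces are $\lie h \otimes \bc[t^{a_j}]$ in weight zero and $\lie g_\alpha \otimes t^{k} \bc[t^{a_j}]$ for $\alpha \in R_k$ otherwise. I pick a non-zero weight vector in $\lie i$. If it has weight zero, say $h \otimes g(t^{a_j})$ with $h \in \lie h \setminus \{0\}$, I choose $\alpha \in R_{k'}$ with $\alpha(h) \neq 0$ (such $\alpha$ exists because the roots span $\lie h^*$) and bracket with $x_{-\alpha} \otimes t^{a_j - k'} \in \lie g[t]^\tau$ to produce the non-zero root-vector element $\alpha(h)\, x_{-\alpha} \otimes t^{a_j - k'} g(t^{a_j}) \in \lie i$.

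Hence I may assume $x_\alpha \otimes t^k g(t^{a_j}) \in \lie i$ with $\alpha \in R_k$ and $g \in \bc[t^{a_j}]$ non-zero. Iterated brackets with $h_\alpha \otimes t^{a_j} \in \lie g[t]^\tau$ yield $x_\alpha \otimes t^{k + n a_j} g \in \lie i$ for all $n \geq 0$, so $x_\alpha \otimes t^k g \, \bc[t^{a_j}] \subset \lie i$. Acting by $\bu(\lie g_0) \otimes 1 \subset \lie g[t]^\tau$ and invoking the irreducibility of $\lie g_k$ as a $\lie g_0$-module (\propref{facts}(ii)) upgrades this to $\lie g_k \otimes t^k g \bc[t^{a_j}] \subset \lie i$. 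I then propagate around the cyclic grading by repeatedly bracketing with $\lie g_1 \otimes t$: by \propref{facts}(iii) one has $[\lie g_1, \lie g_m] = \lie g_{m+1}$ for $0 \leq m \leq a_j - 2$, and by \propref{facts}(i) one has $[\lie g_1, \lie g_{a_j - 1}] = \lie g_0$, so each step shifts $\lie g_m \otimes t^m g \bc[t^{a_j}]$ into $\lie g_{(m+1) \bmod a_j} \otimes t^{m+1} g \bc[t^{a_j}]$. After $a_j$ iterations we accumulate $\lie g_m \otimes t^m g \bc[t^{a_j}] \subset \lie i$ for every $m \geq k$ and $\lie g_m \otimes t^{m + a_j} g \bc[t^{a_j}] \subset \lie i$ for every $m < k$, whose sum is exactly $\lie i_{k, g} = (\lie g \otimes t^k g \bc[t])^\tau$.

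Finite codimension of $\lie i$ then follows immediately from finite codimension of $\lie g \otimes t^k g \bc[t]$ in $\lie g[t]$. The main technical point requiring care is verifying that the cyclic propagation never produces an accidentally vanishing bracket, which is precisely the content of parts (i) and (iii) of \propref{facts}.
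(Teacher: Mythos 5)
Your approach is genuinely different from the paper's. The paper defines, for each $0\le k<a_j$, the set $S_k = \{g \in \bc[t^{a_j}] : \lie g_k\otimes t^k g \subset \lie i\}$, proves these are ideals of $\bc[t^{a_j}]$ satisfying the chain $t^{a_j}S_{a_j-1} \subset S_0 \subset \cdots \subset S_{a_j-1}$, and then analyzes the factorization $g_0\cdots g_{a_j-1}=t^{a_j}$ of the quotients of successive generators to locate the $(k,f)$ in the statement. You instead extract a single root vector via the $\lie h$-weight decomposition, amplify the scalar factor by bracketing with $h_\alpha\otimes t^{a_j}$, upgrade to a full $\lie g_k$-block by irreducibility, and sweep around the $\bz/a_j$-grading. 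Both proofs rest on Proposition~\ref{facts}; yours is shorter and more direct, while the paper's filtration-by-$S_k$ buys tighter control of the polynomial ideal actually achieved (they show the chain forces a single generator $f$), which you do not need since the statement only asks for \emph{some} $\lie i_{k,f}\subset \lie i$.

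There is, however, a gap at the step where you upgrade $x_\alpha\otimes t^k g\,\bc[t^{a_j}]\subset\lie i$ to $\lie g_k\otimes t^k g\,\bc[t^{a_j}]\subset\lie i$: Proposition~\ref{facts}(ii) asserts irreducibility of $\lie g_k$ as a $\lie g_0$-module only for $1\le k<a_j$, while your extracted root vector may well lie in $\lie g_0$ (i.e.\ $k=0$), since nothing in your weight-space argument excludes weights $\alpha\in R_0$, and your escape-from-weight-zero step may itself land you in $R_0$ if you happen to choose $\alpha\in R_0$ there. In that case $\lie g_0$ is in general a nontrivial direct sum of simple ideals, and $\bu(\lie g_0)$ acting on $x_\alpha$ gives only the simple ideal $\lie a\ni x_\alpha$, not all of $\lie g_0$. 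The paper addresses precisely this corner by proving $\lie i\not\subset\lie g_0\otimes\bc[t^{a_j}]$ via a contradiction argument using $x_j^\pm$. Your proof can be repaired either by always choosing $\alpha\in R_1$ in the escape step and, if the initial weight vector itself lies in $\lie g_0\otimes\bc[t^{a_j}]$, by first bracketing once with $\lie g_1\otimes t$; the needed fact is that $[\lie g_1,\lie a]=\lie g_1$ for every simple ideal $\lie a$ of $\lie g_0$, which follows because $\lie g_1$ is a faithful $\lie g_0$-module (any $\lie a$ killing $\lie g_1$ also kills its Killing-dual $\lie g_{a_j-1}$ and hence centralizes $[\lie g_1,\lie g_{a_j-1}]=\lie g_0$, forcing $\lie a=0$) and $[\lie g_1,\lie a]$ is a nonzero $\lie g_0$-submodule of the irreducible $\lie g_1$.
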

\begin{pf} 
For  $0\le k<a_j$,  set $$ S_k= \big\{ g \in \mathbb{C}[t^{a_j}]:  x \otimes t^kg \in \mathfrak{i} \text{ for all }  x \in \lie g_k\big\}. $$   We claim that $S_k$ is an ideal in $\bc[t^{a_j}]$ for all $0\le k<a_j$ and 
\begin{equation}\label{props}t^{a_j} S_{a_j-1} \subset S_0 \subset S_1\subset \cdots \subset S_{a_j-1}. \end{equation}
Let $0\le k< a_j$, $g\in S_k$ and $f\in\bc[t^{a_j}]$. By Proposition \ref{facts} we have $[\lie g_0,\lie g_k]=\lie g_k$ and hence any $x\in\lie g_k$ can be written as  $x=\sum_{s=1}^r[z_s, y_s]$ with $z_s\in\lie g_0$ and $y_s\in\lie g_k$. Therefore $$x\otimes t^kfg=\sum_{s=1}^r[z_s\otimes f, y_s\otimes t^kg]\in\lie i,$$ which proves that $fg\in S_k$ and hence $S_k$ is an ideal in $\mathbb{C}[t^{a_j}]$. A similar argument using $[\lie g_m,\lie g_{k-m}]=\lie g_k$ proves the desired inclusions \eqref{props}.

We now prove that $S_k\ne 0$ for some $0\le k< a_j$. If $\lie i\subset \lie g_0\otimes\bc[t^{a_j}]$ then $[x_j^\pm, \lie i]=0$. This would imply that $(x^+_i\otimes g)\notin\lie i$ for any $i\in I(j)\cup\{0\}$ and $g\in\bc[t^{a_j}]$. This is because if $(x^+_i\otimes g)\in \lie i$  then $(\lie a\otimes g)\in\lie i$ for the  simple ideal $\lie a$ of $\lie g_0$ containing $x^+_i$. But $\lie a$ contains a simple root  vector $x_k^+$ with  $[x_k^+,x_j^+]\ne 0$ and hence we have a contradiction.  In other words we have proved that $\lie i$ must contain an element of the form 
 $(x\otimes t^k g)$ for some root vector $x\in \lie g_k$, $k>0$  and $0\ne g\in \bc[t^{a_j}]$. Since $\lie g_k$ is an irreducible $\lie g_0$--module we have $(\lie g_k\otimes t^k g)\in \lie i$, i.e. $S_k\ne 0$ and we are done. 

Using \eqref{props} we also see that $S_r\neq 0$ for all $0\le r< a_j$; let $f_r\in\bc[t^{a_j}]$  be a non--zero generator for the ideal $S_r$.  By \eqref{props} there exist $g_0,\dots,g_{a_j-1}\in \mathbb{C}[t^{a_j}]$ such that 
$$f_r=g_rf_{r+1},\  0\leq r \leq a_j-2,\ \ t^{a_j}f_{a_j-1}=g_{a_j-1}f_0.$$ This implies
$$g_{a_j-1}f_0=g_0\cdots g_{a_j-1}f_{a_j-1}=t^{a_j}f_{a_j-1}.$$ Hence there exists a unique $m\in\{0,\dots, a_j-1\}$ such that $g_m=t^{a_j}$  and $g_p=1$ if $p\ne m$. Taking $f=f_{m+1}$, where we understand $f_{a_j}=f_{0}$, we see that $$\lie i_{k,f}\subset\lie i,\ \ k=m+1-a_j\delta_{m,a_j-1}.$$
\end{pf}

\subsection{}  We now show that $\lie g[t]^\tau$ is never a current algebra or more generally an equivariant map algebra with free action.  For this, we  recall from \cite{NSS} the definition of an equivariant map algebra.  Thus, let  $\lie a$ be any complex Lie algebra and $A$ a finitely generated commutative associative algebra.  Assume also that  $\Gamma$ is  a finite abelian group acting on $\mathfrak{a}$ by Lie algebra automorphisms and on $A$ by algebra automorphisms. Then we have an induced action on the Lie algebra  $\big(\mathfrak{a}\otimes A\big)$ (the commutator is given in the obvious way)  such that $\gamma(x\otimes f)=\gamma x\otimes \gamma f$.   An  equivariant map algebra  is defined to be the fixed point subalgebra:
$$\big(\mathfrak{a}\otimes A\big)^{\Gamma}:=\big\{z\in \big(\mathfrak{a}\otimes A\big) \mid \gamma(z)=z \ \ \forall  \ \gamma\in \Gamma\big\}.$$ The finite--dimensional irreducible representations of such algebras (and hence for $\lie g[t]^\tau$) were given in \cite{NSS} and generalized  earlier work on affine Lie algebras.  
In the case when $\Gamma$ acts freely on $A$, many aspects of  the representation theory of the equivariant map algebra are the same as the representation theory of $\lie a\otimes A$ (see for instance \cite{FKKS11}). The importance of the following proposition is now clear.
\begin{prop} \label{eqmap}  The Lie algebra  $\lie g[t]^\tau $  is  not  isomorphic to an  equivariant map algebra $(\mathfrak{a} \otimes A )^{\Gamma}$ with $\lie a$ semisimple  and $\Gamma$ acting freely on $A$.\end{prop}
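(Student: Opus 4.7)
The approach is to derive a contradiction from a hypothetical isomorphism $\psi:\lie g[t]^\tau\xrightarrow{\sim}(\lie a\otimes A)^\Gamma$ by comparing, for two non-isomorphic simple finite-dimensional quotients, the cardinalities of the maximal ideals that realize them. First, I would produce two non-isomorphic simple quotients of $\lie g[t]^\tau$. By \eqref{diffquot}, for each $z\in\bc^\times$ the restriction $\ev_z|_{\lie g[t]^\tau}:\lie g[t]^\tau\to\lie g$ is surjective, so its kernel $M_z$ is a maximal ideal with quotient $\lie g$, and $M_z=M_{z'}$ if and only if $z,z'$ lie in the same $\bz/a_j$-orbit in $\bc^\times$; hence the set of such maximal ideals is uncountable. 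On the other hand $\ev_0|_{\lie g[t]^\tau}$ surjects onto $\lie g_0$ with kernel $\lie i_{1,1}$, so composing with the projection onto any simple summand $\lie h$ of $\lie g_0$ yields a maximal ideal with quotient $\lie h$. Since $a_j\ge 2$ forces $\lie g_0\subsetneq\lie g$ and hence $\dim\lie h\le\dim\lie g_0<\dim\lie g$, the algebras $\lie g$ and $\lie h$ are not isomorphic.

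Next, I would argue that only finitely many maximal ideals of $\lie g[t]^\tau$ have quotient isomorphic to $\lie h$. The crucial observation is that for such an $M$ and any $z\in\bc^\times$, the image $\ev_z(M)$ is an ideal in the simple algebra $\lie g$ and cannot be zero (otherwise $M=M_z$ and the quotient would be $\lie g$, not $\lie h$), so $\ev_z(M)=\lie g$ for every $z\in\bc^\times$. Combining this with Proposition \ref{fincodim}, which gives $M\supseteq\lie i_{k,f}$ for some $k,f$, and with a direct analysis of the semisimplification of the finite-dimensional quotient $\lie g[t]^\tau/\lie i_{k,f}$ — whose simple factors are one copy of $\lie g$ per non-zero $\bz/a_j$-orbit of roots of $f$ and the simple summands of $\lie g_0$ if $0$ is a root — one concludes that $M\supseteq\lie i_{1,1}$. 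Thus such $M$ correspond bijectively to the finitely many maximal ideals of the semisimple algebra $\lie g_0$ with quotient $\lie h$.

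Finally, I would apply the description of maximal ideals of free equivariant map algebras from \cite{NSS,FKKS11}: when $\Gamma$ acts freely on $A$, the maximal ideals of $(\lie a\otimes A)^\Gamma$ whose simple quotient is isomorphic to a prescribed simple summand $\lie a_i\subseteq\lie a$ are counted by $|\Max(A^\Gamma)|$ times the multiplicity of $\lie a_i$ in $\lie a$; in particular this cardinality is infinite if and only if $\Max(A^\Gamma)$ is. Under the assumed isomorphism both $\lie g$ and $\lie h$ would be simple summands of $\lie a$, so the uncountable count for $\lie g$-quotients and the finite count for $\lie h$-quotients would force $\Max(A^\Gamma)$ to be both infinite and finite, which is absurd. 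The main technical obstacle is the middle step — verifying that every maximal ideal whose simple quotient has dimension strictly less than $\dim\lie g$ is supported at the $\tau$-fixed point $0\in\bc$, i.e.\ contains $\lie i_{1,1}$; this is precisely where the non-freeness of the $\bz/a_j$-action on $\bc$ manifests itself in the ideal structure of $\lie g[t]^\tau$ and cannot be reproduced by any free equivariant map algebra.
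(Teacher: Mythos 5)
Your argument is correct in outline but takes a genuinely different, and considerably heavier, route than the paper. The paper's proof has two short steps: first, writing $\lie a=\lie a_1\oplus\cdots\oplus\lie a_k$ with each $\lie a_s$ isotypic, it observes that $\Gamma$ preserves each $\lie a_s$, so $(\lie a\otimes A)^\Gamma$ splits as a direct sum of ideals; if $k\ge 2$ the complement of the (necessarily existing) infinite-dimensional summand would be a nonzero ideal of infinite codimension, contradicting Proposition \ref{fincodim}, so $\lie a$ is isotypic. Second, it invokes only the qualitative part of \cite[Proposition 5.2]{NSS} --- every finite-dimensional simple quotient of a free equivariant map algebra is a quotient of $\lie a$ --- to conclude that all simple quotients are isomorphic, which \eqref{diffquot} immediately contradicts since $\lie g$ and any simple factor of $\lie g_0$ are non-isomorphic simple quotients of $\lie g[t]^\tau$. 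You use the same two inputs (Proposition \ref{fincodim} and the two evaluation quotients) but deploy them to \emph{count} maximal ideals per isomorphism class of simple quotient rather than to constrain the isomorphism classes themselves. What your approach buys is a sharper structural statement (the $\lie g_0$-type maximal ideals are all supported at $0$, hence finite in number, while the $\lie g$-type ones are uncountable), which indeed pinpoints where non-freeness lives; what it costs is two substantial verifications that you only sketch.

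Concretely: (a) your middle step requires computing the radical of $\lie g[t]^\tau/\lie i_{k,f}$ to see that its semisimplification is $\lie g_0^{\epsilon}\oplus\lie g^{\oplus m}$ and hence that any maximal ideal with quotient not isomorphic to $\lie g$ contains $\ker\ev_0\cap\lie g[t]^\tau=\lie i_{1,1}$; this is true (the kernel of the joint evaluation modulo $\lie i_{k,f}$ is nilpotent, being $(\lie g\otimes J)^\tau$ for a nilpotent ideal $J$ of $\bc[t]/(t^kf)$), but it is a genuine argument, not a remark. (b) Your final step attributes to \cite{NSS,FKKS11} a counting formula for maximal Lie ideals of $(\lie a\otimes A)^\Gamma$ with prescribed simple quotient; those references classify irreducible finite-dimensional representations as evaluation modules, and extracting from this that \emph{every} maximal ideal is the kernel of an evaluation at a point of $\Max(A)/\Gamma$ followed by a projection (and hence that the counts for any two simple summands of $\lie a$ are simultaneously finite or infinite) needs to be spelled out; the multiplicity factor should also be corrected to count $\Gamma$-orbits of summands. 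Also note that your $\lie h$ clashes with the paper's notation for the Cartan subalgebra. None of this is a fatal gap, but if you want the short proof, the isotypic-decomposition trick plus the qualitative form of \cite[Proposition 5.2]{NSS} makes the entire counting apparatus unnecessary.
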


\begin{pf}
Recall our assumption that $a_j>1$ and assume for a contradiction that $$\lie g[t]^\tau\cong (\lie a\otimes A)^\Gamma$$ where $\lie a$ is semi--simple.
Write $\lie a=\lie a_1\oplus \cdots \oplus \lie a_k$ where each $\lie a_s$ is a  direct sum of copies of a  simple Lie  algebra $\lie g_s$ and $\lie g_s\ncong \lie g_m$ if $m\ne s$. Clearly $\Gamma$ preserves $\lie a_s$ for all $1\le s\le k$ and hence $$\lie g[t]^\tau\cong (\lie a\otimes A)^\Gamma\cong \oplus_{s=1}^k(\lie a_s\otimes A)^\Gamma.$$  Since $\lie g[t]^\tau$ is infinite--dimensional  at least one of the summands $(\lie a_s\otimes A)^\Gamma$ is infinite--dimensional, say $s=1$ without loss of generality.  But this means that $\oplus_{s=2}^k(\lie a_s\oplus A)^\Gamma$ is an ideal  which is not of  finite codimension which contradicts Proposition \ref{fincodim}. Hence we must have $k=1$, i.e. $\lie a=\lie a_1$.
 It was proven in \cite[Proposition 5.2]{NSS} that if $\Gamma$ acts freely on $A$ then any finite--dimensional simple quotient of $(\lie a\otimes A)^\Gamma$  is  a quotient of $\lie a$; in particular in our situation  it follows that all the finite--dimensional simple quotients  of $(\lie a\otimes A)^\Gamma$ are isomorphic. On the other hand, 
\eqref{diffquot}  shows that $\lie g[t]^\tau$ has  both $\lie g_0$ and $\lie g$ as quotients. Since  $\lie g_0$ is not isomorphic to $\lie g$  we have the desired contradiction.
\end{pf}

\subsection{} We now make the connection of $\lie g[t]^\tau$ with a maximal parabolic subalgebra of the untwisted affine Lie algebra $ \widehat{\lie g}$ associated to $\lie g$. 

 Fix a Cartan subalgebra $\wh{\lie h}$ of $\wh{\lie g}$ containing $\lie h$ and recall that $$\wh{\lie h}=\lie h\oplus\bc c\oplus\bc d,$$ where $c$ spans the one--dimensional center of $\wh{\lie g}$ and $d$ is the scaling element.  Let $\delta\in \wh{\lie h}^*$ be the unique non--divisible positive imaginary root, i.e.,  $\delta(d)=1$ and $\delta(\lie h\oplus\bc c)=0$.  Extend $\alpha\in\lie h^*$ to an element of $\wh{\lie h}^*$ by $\alpha(c)=\alpha(d)=0$. The elements $\{\alpha_i: 1\le i\le n\}\cup\{-\theta+\delta\}$ is a set of simple roots for $\wh{\lie g}$.  We define a grading on $\wh{\lie g}$ as follows: for $ r \in \mathbb{Z} $ and  for each $x_{\alpha} \in \wh{\lie g}_{\alpha}$, $x_{\alpha} \in \wh{\lie g}[r]$ iff  $\alpha = \sum_{i=0}^n r_i \alpha_i $ and  $ r = r_j. $ The following is not hard to prove.
\begin{prop}\label{heliso} Let $\wh{\lie p}$ be the maximal parabolic subalgebra generated by the elements $x_i^\pm$, $i\in I(j)$, $x_{\pm(\delta-\theta)}$ and $x_j^+$. Then there exists an isomorphism of graded Lie algebras $$\widehat{\lie p}\cong\lie g[t]^\tau.$$
\hfill\qed
\end{prop}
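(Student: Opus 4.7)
The plan is to work in the loop realization $\wh{\lie g}=(\lie g\otimes\bc[s^{\pm 1}])\oplus \bc c\oplus \bc d$, under which a real affine root vector $x_{\bar\alpha+k\delta}$ corresponds to $x_{\bar\alpha}\otimes s^k$, and in particular $x_{\pm(\delta-\theta)}$ corresponds to $x_\theta^\mp\otimes s^{\pm 1}$. In this picture the listed generators of $\wh{\lie p}$ all have $r_j$-grade $0$ or $1$, so $\wh{\lie p}$ inherits a $\bz_+$-grading. A Cartan matrix check (using $(\delta-\theta,\alpha_i)=-(\theta,\alpha_i)$) identifies the grade-zero Levi generated by $\{x_i^\pm: i\in I(j)\}\cup\{x_{\pm(\delta-\theta)}\}$ with the finite--dimensional semisimple algebra $\lie g_0$ from \propref{defg0}, since the simple system $\{\alpha_i: i\in I(j)\}\cup\{\delta-\theta\}$ carries the same Dynkin diagram as $\{\alpha_i: i\in I(j)\}\cup\{-\theta\}$.

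The heart of the argument is the substitution
$$\Phi:\lie g\otimes \bc[s^{\pm 1}]\longrightarrow \lie g\otimes \bc[t^{\pm 1}],\qquad x_{\bar\alpha}\otimes s^k\mapsto x_{\bar\alpha}\otimes t^{\boa_j(\bar\alpha)+ka_j},\qquad h\otimes s^k\mapsto h\otimes t^{ka_j}.$$
A case-by-case comparison of loop brackets shows that $\Phi$ is a Lie algebra homomorphism, and it is manifestly injective because distinct basis vectors land in distinct weight spaces. Noting that $x_{\bar\alpha}\otimes t^m$ is fixed by the natural extension $\phi(x\otimes t^r)=\tau(x)\otimes \zeta^{-r}t^r$ exactly when $m\equiv \boa_j(\bar\alpha)\pmod{a_j}$---which is precisely the range of exponents attained by $\Phi$---shows that $\Phi$ maps onto the twisted loop algebra $(\lie g\otimes\bc[t^{\pm 1}])^\phi$. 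By construction $\Phi$ matches the $r_j$-grading on the source with the $t$-grading on the target, so it is an isomorphism of graded Lie algebras.

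To finish, let $\pi:\wh{\lie g}\twoheadrightarrow \lie g\otimes \bc[s^{\pm 1}]$ denote the projection annihilating $\bc c\oplus\bc d$. I claim $\pi$ restricts to an injection on $\wh{\lie p}$: the scaling element $d$ never appears in brackets of the listed generators, and the only bracket producing the center yields $[x_{\delta-\theta},x_{\theta-\delta}]=c-h_\theta$, so $c\notin \wh{\lie p}$. The composition $\Phi\circ \pi$ then maps $\wh{\lie p}$ into the non-negative $t$-degree part of $(\lie g\otimes\bc[t^{\pm 1}])^\phi$, which is exactly $\lie g[t]^\tau$. Surjectivity will follow from \propref{facts}: the Levi maps onto $\lie g_0\otimes 1$, the image of $x_j^+$ lies in $\lie g_1\otimes t$ and generates it as a $\lie g_0$-module by irreducibility (part (ii)), and iterated brackets via $\lie g_k=[\lie g_{k-m},\lie g_m]$ reach all higher graded pieces (part (iii)). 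The principal obstacle is the injectivity claim $\bc c\cap \wh{\lie p}=0$; once that is in hand, the remaining steps reduce to routine bracket computations in the loop realization together with the applications of \propref{facts} above.
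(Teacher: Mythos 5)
Your proposal is correct and follows essentially the approach the paper has in mind: the paper states the proposition without proof, but the $(B_n,D_n)$ example immediately afterwards exhibits exactly your degree--shift map (sending a root vector of $j$--grade $m$ to $x_{\bar\alpha}\otimes t^{m}$), and your verification of injectivity via $\bc c\cap\wh{\lie p}=0$ and of surjectivity via Proposition~\ref{facts} supplies precisely the routine details the authors omit.
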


\begin{ex}
In the case of $(B_n, D_n)$, we have $a_n=2$. Recall that the derived subalgebra of an untwisted affine Lie algebra can be realized as a universal central extension of the loop algebra $\lie g \otimes \mathbb{C}[t, t^{-1}]$.    For $r \in \mathbb{Z}_+$, the elements $x_{\alpha_{i,j}}^{\pm} \otimes t^r,  x_{\alpha_{i,\overline{j+1}}}^{\pm} \otimes t^{(r \mp 1)}$ and $ h_i \otimes t^r$ for $1 \leq i \leq j \leq n$ form a graded basis of $\wh{\lie p}[2r],$ and the elements $x_{\alpha_{i,n}}^{-} \otimes t^{r+1} $ and $x_{\alpha_{i,n}}^{+} \otimes t^{r} $ for $ 1 \leq i \leq n$  form a basis of $\wh{\lie p}[2r+1].$

The map
$$x \otimes t^k \mapsto
x\otimes t^{2r+ s},\ \text{ if } x \otimes t^{k} \in \widehat{\mathfrak{p}}[2r+s]
$$
gives the isomorphism in Proposition \ref{heliso}.
\end{ex}


\section{The category \texorpdfstring{$\ti{\cal I}_{}$}{I} }\label{section3}
In this section we  develop the representation theory of $\lie g[t]^\tau$. Following \cite{CFK10}, \cite{CIK14}, we define the notion of global Weyl modules, the associated commutative algebra and the local Weyl modules associated to maximal ideals in this algebra. In the case of $\lie g[t]$  it was shown in \cite{CP01}  that  the  commutative algebra associated with a global Weyl module  is a  polynomial ring in finitely many  variables. This is no longer true for $\lie g[t]^\tau$; however we shall see that modulo the Jacobson radical, the algebra is a quotient of a finitely generated  polynomial ring by a squarefree monomial ideal. As a consequence we see that under suitable conditions a global Weyl module  can be  finite--dimensional and irreducible. More precise statements can be found in Section~\ref{section5}.

\subsection{}
 Fix a set of fundamental weights $\{\lambda_i: i\in I(j)\cup\{0\}\}$ for $\lie g_0$ with respect to $\Delta_0$ and let $P_0, P_0^+$ be their $\bz$ and $\bz_+$--span respectively.  Note that the subset $$P^+= \{\lambda\in P_0^+:\lambda(h_j)\in\bz_+\}$$ is precisely the set of dominant integral weights for $\lie g$ with respect to $\Delta$. Also note that $P^+ $ is properly contained in $P_0^+$.  For example, in the $B_n$ case, $\lambda_{n-1} \in P_0^+$, and $\lambda_{n-1}(h_n) = -1$.  It is the existence of these types of weights that cause the representation theory of $\lie g [t]^{\tau}$ to be different from that of $\lie g[t]$.  

For $\lambda\in P_0^+$ let $V_{\lie g_0}(\lambda)$ be the irreducible finite--dimensional $\lie g_0$--module with highest weight $\lambda$ and highest weight vector $v_\lambda$;  if $\lambda\in P^+$ the module $V_{\lie g}(\lambda)$ and the vector $v_\lambda$  are defined in the same way.

\subsection{}\label{charh}  Let $\ti{\cal I}$ be the category whose objects are $\lie g[t]^\tau$--modules with the property  that  they are $\lie g_0$ integrable and where the morphisms are $\lie g[t]^\tau$--module maps. In other words an object $V$ of $\ti{\cal I}$ is a $\lie g[t]^\tau$--module which   is isomorphic to  a direct sum of  finite--dimensional $\lie g_0$--modules. It follows that $V$ admits a weight space decomposition $$V=\bigoplus_{\mu\in P_0} V_\mu,\ \ V_\mu=\{v\in V: hv=\mu(h)v,\ h\in\lie h\},$$ and we set $\wt V=\{\mu\in P_0: V_\mu\ne 0\}.$   Note that $$w\wt V\subset\wt V,\ \  w\in W_0,$$ where $W_0$ is the Weyl group of $\lie g_0$. 


For  $\lambda\in P_0^+$ we let $\ti{\cal I}^\lambda$ be the full subcategory of $\tii$ whose objects $V$ satisfy the condition that $\wt V\subset \lambda-Q^+$; note that this is a weaker condition than requiring the set of weights be contained in $\lambda-Q_0^+$ (see   \secref{delta0}).  

\begin{lem}\label{wfinite}  Suppose that $V$ is an object of $\ti{\cal I}^\lambda$ and let $\mu\in\wt V$ and $\alpha\in R^+$. Then $\mu-s\alpha\in\wt V$ for only finitely many $s$.

\end{lem}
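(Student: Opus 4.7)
The plan is to prove the stronger statement that $\wt V$ is a finite subset of $P_0$, from which the lemma follows immediately, since only finitely many elements of $P_0$ can lie on the string $\{\mu-s\alpha:s\in\bz\}$.

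The first step uses integrability: since $V$ is $\lie g_0$-integrable, $\wt V$ is stable under the Weyl group $W_0$ of $\lie g_0$. Consequently every weight of $V$ is $W_0$-conjugate to a $\lie g_0$-dominant weight that is again in $\wt V$, so $\wt V=W_0\cdot(\wt V\cap P_0^+)$. Because $W_0$ is finite, it is enough to show that $\wt V\cap P_0^+$ is finite.

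The core step, and what I expect to be the main obstacle, is to show that the larger set $\{\nu\in P_0^+:\lambda-\nu\in Q^+\}$ is finite. The difficulty is a lattice mismatch: the hypothesis $\lambda-\nu\in Q^+$ is phrased in the $\lie g$-root lattice, whereas $\nu$ only needs to be $\lie g_0$-dominant, and $P_0$ is strictly larger than $P$ in general. To bridge this I would choose $\xi\in\lie h$ with $\alpha_i(\xi)=1$ for all $i\in I$ (the sum of the $\lie g$-fundamental coweights), so that $\beta(\xi)\geq 0$ for every $\beta\in Q^+$. Pairing $\lambda-\nu\in Q^+$ against $\xi$ then yields $\nu(\xi)\leq\lambda(\xi)$.

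To finish, I would expand $\nu=\sum_p n_p\lambda_p$ with $n_p\in\bz_+$ and $p\in I(j)\cup\{0\}$, turning the previous inequality into $\sum_p n_p\lambda_p(\xi)\leq\lambda(\xi)$. The remaining input is that $\lambda_p(\xi)>0$ for each $p$. This holds because $\lambda_p$ is a nontrivial non-negative rational combination of the simple roots in $\Delta_0$ (a standard property of fundamental weights of semisimple Lie algebras), and by the Lemma in Section~\ref{delta0} every element of $\Delta_0$ lies in $R^+$ and so pairs strictly positively with $\xi$. Hence each $n_p$ is bounded, $\wt V\cap P_0^+$ is finite, and the lemma follows.
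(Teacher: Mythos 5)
Your proof is correct, and it takes a genuinely different route from the paper's. The paper argues one root at a time: given $\alpha\in R^+$, it picks $w\in W_0$ moving $\alpha$ into the $\Delta_0$-anti-dominant chamber, uses $\Delta_0\subset R^+$ to conclude $-w\alpha\in R^+$, and then bounds $s$ by applying $w$ to the string $\mu-s\alpha$ and intersecting with $\lambda-Q^+$. You instead prove the stronger global statement that $\wt V$ itself is finite, by reducing (via $W_0$-stability) to bounding $\wt V\cap P_0^+$ and then pairing against the sum $\xi$ of the fundamental coweights of $\lie g$. The key point in both arguments is the same input, namely the Lemma of Section~\ref{delta0} that $\Delta_0\subset R^+$: the paper uses it to see that $w\alpha$ is a non-positive combination of $\Delta$, while you use it to guarantee $\lambda_p(\xi)>0$ for every fundamental weight $\lambda_p$ of $\lie g_0$ (since $\lambda_p$ is a nonzero non-negative rational combination of $\Delta_0$ and every element of $\Delta_0$ has positive height). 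Your version handles the lattice mismatch between $P_0^+$ and $Q^+$ cleanly and buys something extra: it immediately gives the finiteness of $\wt V$ for \emph{every} object of $\ti{\cal I}^\lambda$, a fact the paper only records for $W(\lambda)$ in the Remark following Proposition~\ref{fingen}, where it is extracted from the Garland-identity argument and flagged as ``not obvious.'' The paper's argument is more minimal, proving exactly the per-root statement needed, but your route is at least as short and yields the stronger conclusion.
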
 
\begin{pf} If $\alpha\in R_0^+$ the result is immediate since $V$ is a sum of finite--dimensional $\lie g_0$--modules. 
Since $\alpha\in P_0$, it follows that there exists $w\in W_0$ such that $w\alpha$ is in the anti--dominant chamber for the action of $W_0$ on $\lie h$.  This implies that $w\alpha=-r_0\alpha_0-\sum_{i\in  I(j)} r_i\alpha_i$ where the $r_i$ are non--negative {\em rational } numbers.  Since $W_0$ is a subgroup of $W$ it  follows that $-w\alpha\in R^+$. This shows that  if  $\mu\in\wt V$ is such that $\mu-s\alpha\in\wt V$, then $$w\mu-sw\alpha\in\wt V\subset\lambda-Q^+.$$ This is possible only for finitely many $s$ and hence the Lemma is established.
\end{pf}

\subsection{} Let $$\lie g=\lie n^-\oplus\lie h\oplus\lie n ^+,\ \ \  \lie n^\pm=\bigoplus_{\alpha\in  R^+}\lie g_{\pm \alpha},$$ be the triangular decomposition of $\lie g$.   Since $\tau$ preserves the subalgebras $\lie n^\pm$ and $\lie h$ we have $$\lie g[t]^\tau=\lie n^-[t]^\tau\oplus\lie h[t]^\tau\oplus\lie n^+[t]^\tau.$$  Further $\lie h[t]^\tau\cong\lie h\otimes \bc[t^{a_j}]$ is a commutative subalgebra of $\lie g[t]^\tau$.

For $\lambda\in P_0^+$ the global Weyl module $W(\lambda)$ is the cyclic $\lie g[t]^\tau$--module generated by an element $w_\lambda$ with defining relations: for $h\in\lie h$ and $i\in I(j)\cup\{0\}$, \begin{equation}\label{glweyl}hw_\lambda =\lambda(h)w_\lambda,\   \  \lie n^+[t]^\tau w_\lambda=0,\ \    (x^-_{i}\otimes 1)^{\lambda(h_{i})+1}w_\lambda=0.\end{equation}
It is elementary to check  that $W(\lambda)$ is an object of $\tii_j^\lambda$, one just needs to observe that the elements $x^{\pm}_{i}$, $i\in I(j)\cup\{0\}$ act locally nilpotently on $W(\lambda)$.   Moreover, if we declare the grade of $w_\lambda$ to be zero then $W(\lambda)$ acquires the structure of a $\bz_+$ graded $\lie g[t]^\tau$--module.

\subsection{}   As in \cite{CFK10} one checks easily that the following formula  defines a right action of $\lie h[t]^\tau$ on $W(\lambda)$: $$(uw_\lambda)a=uaw_\lambda,\ \  u\in \bu(\lie g[t]^\tau),\ \ a\in\lie h[t]^\tau.$$ Moreover this action commutes with the left action of $\lie g[t]^\tau$. In particular, if we set $$\ann_{ \lie h[t]^\tau}(w_\lambda)=\{a\in \bu(\lie h[t]^\tau): aw_\lambda=0\},\qquad \ba_\lambda=\bu(\lie h[t]^\tau)/\ann_{ \lie h[t]^\tau}(w_\lambda), $$ we get that $\ann_{ \lie h[t]^\tau}(w_\lambda)$ is an ideal in $\bu(\lie h[t]^\tau)$ and that $W(\lambda)$ is a bi--module for $(\lie g[t]^\tau,\ba_\lambda)$.  
It is clear that $\ann_{\lie h[t]^\tau}(w_\lambda)$ is a graded ideal   of $\bu(\lie h[t]^\tau)$ and hence the algebra $\ba_\lambda$ is a $\bz_+$--graded algebra with a unique graded maximal ideal $\bi_0$.

 It is obvious from the definition that  we have an isomorphism of right $\ba_\lambda$--modules $W(\lambda)_\lambda\cong \ba_\lambda$.  We now prove,
\begin{prop} \label{fingen} For all $\lambda\in P_0^+$ the algebra $\ba_\lambda$ is finitely generated and $W(\lambda)$ is a finitely generated $\ba_\lambda$--module.
\end{prop}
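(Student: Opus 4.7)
The plan is to adapt the template developed in \cite{CP01, CFK10} for the current algebra, combining the PBW theorem with Garland-type identities arising from the $\mathfrak{sl}_2$--subalgebras indexed by the simple roots of $\lie g_0$.

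First, I would apply the triangular decomposition $\lie g[t]^\tau = \lie n^-[t]^\tau \oplus \lie h[t]^\tau \oplus \lie n^+[t]^\tau$ together with the relation $\lie n^+[t]^\tau w_\lambda = 0$ and the PBW theorem to conclude that $W(\lambda) = \bu(\lie n^-[t]^\tau)\,\bu(\lie h[t]^\tau) w_\lambda$. Since every nontrivial PBW monomial in $\lie n^-[t]^\tau$ strictly decreases the $\lie h$-weight inside $\lambda - Q^+$, the weight-$\lambda$ subspace is $W(\lambda)_\lambda = \bu(\lie h[t]^\tau) w_\lambda$, so the definition of $\ba_\lambda$ gives an isomorphism $W(\lambda)_\lambda \cong \ba_\lambda$ of graded right $\ba_\lambda$--modules. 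This reduces the problem of finite generation of $\ba_\lambda$ to controlling the image of $\bu(\lie h[t]^\tau)$ on $w_\lambda$.

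Second, to prove finite generation of $\ba_\lambda$: for each $i \in I(j)\cup\{0\}$ the root vectors $x^\pm_i$ lie in $\lie g_0$, so the $\mathfrak{sl}_2$--triple they span sits inside $\lie g[t]^\tau$ as $\mathfrak{sl}_2 \otimes \bc[t^{a_j}]$, and the restriction of $W(\lambda)$ to this subalgebra is a highest-weight quotient of the classical $\mathfrak{sl}_2$--current-algebra global Weyl module of highest weight $\lambda(h_i)$. The standard Garland--Chari--Pressley identity applied to $(x^+_i \otimes t^{ka_j})^r(x^-_i \otimes 1)^{\lambda(h_i)+1} w_\lambda = 0$ then expresses $h_{\alpha_i} \otimes t^{k a_j}$ for $k > \lambda(h_i)$ polynomially in the elements $h_{\alpha_i}\otimes t^{\ell a_j}$ with $0\leq \ell\leq \lambda(h_i)$. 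Since $\Delta_0$ is a basis of $\lie h^*$, the vectors $\{h_{\alpha_i} : i \in I(j) \cup \{0\}\}$ form a basis of $\lie h$, so every $h \otimes t^{ka_j}$ is a linear combination of the $h_{\alpha_i} \otimes t^{ka_j}$; hence $\ba_\lambda$ is generated by the finite set $\{h_{\alpha_i} \otimes t^{ka_j} : i\in I(j)\cup\{0\},\ 0 \leq k \leq \lambda(h_i)\}$.

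Third, I would prove $W(\lambda)$ is finitely generated over $\ba_\lambda$. By the first step it suffices to exhibit a finite subset $F \subset \bu(\lie n^-[t]^\tau) w_\lambda$ whose right $\ba_\lambda$--translates span $W(\lambda)$. For each $\alpha_i \in \Delta_0$, a Garland-type identity derived from $(x^-_i \otimes 1)^{\lambda(h_i)+1} w_\lambda = 0$ allows one to express PBW monomials in $(x^-_i \otimes t^{ka_j})^r$ with $k$ or $r$ large as combinations of monomials of smaller multi-index modulo the right action of $\ba_\lambda$. For root vectors $x^-_\alpha \otimes t^m$ attached to $\alpha \in R^+$ that are not simple in $\lie g_0$, I would transfer these bounds via the $\lie g_0$--action using \propref{facts}: each $\lie g_m$ is an irreducible $\lie g_0$--module, so every $x^-_\alpha \otimes t^m$ is obtained from a simple--root element of $\Delta_0$ by an adjoint $\lie g_0$--action, and the bounds propagate. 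Combined with \lemref{wfinite} to control the weight support in each root direction, this caps the needed PBW monomials at a finite set $F$.

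The main obstacle will be the third step: the twisted graded pieces $\lie g_m \otimes t^m\bc[t^{a_j}]$ for $0 < m < a_j$ contain root vectors not directly attached to simple roots of $\lie g_0$, so no $\mathfrak{sl}_2$--reduction applies verbatim to them. The $\lie g_0$--irreducibility established in \propref{facts} is precisely the structural fact that lets us spread the simple-root Garland bounds across all of $\lie n^-[t]^\tau$, turning the a priori infinite family of PBW generators into a finite one after passing to the $\ba_\lambda$--action.
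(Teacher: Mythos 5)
Your first two steps track the paper's argument exactly: the identification $W(\lambda)_\lambda\cong\ba_\lambda$ is the standard one, and expressing $h_{\alpha_i}\otimes t^{ka_j}$ polynomially in lower-degree terms is equivalent to the paper's statement that $P_{i,r}$ vanishes in $\ba_\lambda$ for $r>\lambda(h_i)$, which it gets from the same Garland identity applied to the simple $\Delta_0$--directions. So far, so good.

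The gap is in step three, and the ``main obstacle'' you identify is in fact illusory. You claim that for $\alpha\in R^+\setminus R_0^+$ no $\lie{sl}_2$--reduction applies and you propose to repair this by pushing simple-root bounds around with the $\lie g_0$--action, writing ``every $x^-_\alpha\otimes t^m$ is obtained from a simple--root element of $\Delta_0$ by an adjoint $\lie g_0$--action.'' This cannot be true: the adjoint $\lie g_0$--action preserves each graded piece $\lie g_k$, so if $\alpha\in R^+\setminus R_0^+$ (hence $x^-_\alpha\in\lie g_{a_j-k}$ for some $k\ne 0$), no amount of $\lie g_0$--action can produce $x^-_\alpha$ from an element of $\lie g_0\supset\lie n^-_0$. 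The irreducibility statement in Proposition~\ref{facts} says only that each $\lie g_m$ is an irreducible $\lie g_0$--module \emph{in its own right}; its vectors are reached from a single highest or lowest weight vector such as $x^\pm_{\theta_{m}}$, not from $\Delta_0$. Even with that correction the transfer mechanism would need a fresh argument, since PBW monomials do not transform cleanly under $\ad\lie g_0$.

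What makes the paper's proof go through without any such transfer is an elementary observation you did not consider: for \emph{every} $\alpha\in R^+$, not just $\alpha\in R_0^+$, the pair $(x^+_\alpha\otimes t^{\boa_j(\alpha)},\, x^-_\alpha\otimes t^{a_j-\boa_j(\alpha)})$ lies in $\lie g[t]^\tau$ and generates a subalgebra isomorphic to the one generated by $(x^+\otimes 1)$ and $(x^-\otimes t)$ inside $\lie{sl}_2[t]$. The shift by $\boa_j(\alpha)$ is exactly what is needed to land in the $\tau$--fixed subalgebra. The Garland identities \eqref{gar1} and \eqref{gar2} therefore apply directly to every root direction: \eqref{gar1} gives the vanishing $P_{\alpha,r}w_\lambda=0$ for $\alpha\in R_0^+$ and large $r$, and \eqref{gar2} gives a degree reduction for $(x^-_\gamma\otimes t^{ra_j-\boa_j(\gamma)})w_\lambda$ for \emph{arbitrary} $\gamma\in R^+$, twisted or not. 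Coupled with Lemma~\ref{wfinite} (which bounds the length of the PBW monomials in each root direction), this uniform $\lie{sl}_2$--reduction is what caps the spanning set at a finite family, by induction on the length of the monomial. If you replace your transfer argument with this observation, your outline becomes essentially the paper's proof.
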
 
The proof of the proposition is very similar to the one given in \cite{CFK10} but we sketch the proof below for the reader's convenience and also to set up some further necessary notation. Unlike in the case of $\lie g[t]$ we will later see that the global Weyl module is not a free $\ba_\lambda$ module in general (see \secref{notfreealambda})

\subsection{} We need an additional result to prove Proposition \ref{fingen}. For $\alpha
\in R^+$ and $r\in\bz_+$, define elements $P_{\alpha,r}\in\bu(\lie h[t]^\tau)$ recursively  by
$$ P_{\alpha,0}=1,\ \ P_{\alpha
,r}=-\frac{1}{r}\sum_{p=1}^{r}(h_{\alpha}\otimes t^{a_jp})P_{\alpha, r-p},\ \ r\geq 1.  $$  Equivalently $P_{\alpha, r}$ is the coefficient of $u^r$ in the formal power series $$P_{\alpha}(u)=\exp\left(-\sum_{r\ge 1} \frac{h_{\alpha}\otimes t^{a_jr}}{r}u^r\right).$$  Writing $h_\alpha=\sum_{i=1}^n\boa_i^\vee(\alpha) h_i$, we  see that
$$P_{\alpha}(u)=\prod_{i=1}^n P_{\alpha_i}(u)^{\boa_i^\vee(\alpha)},\ \ \alpha\in R^+.$$
Set $P_{\alpha_i, r}= P_{i,r}$, $i\in I\cup\{0\}$. The following is now trivial from the Poincar\'{e}--Birkhoff--Witt theorem.
\begin{lem}\label{poly}
 The algebra  $\bu(\lie h[t]^\tau)$ is the polynomial algebra in the variables $$\{P_{i, r}: i\in I(j)\cup\{0\},\ r\in\bn\},$$  and also in the variables $$\{P_{i, r}: i\in I,\ r\in\bn\}.$$ \hfill\qedsymbol \end{lem}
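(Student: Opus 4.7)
The plan is to reduce the lemma to two essentially trivial observations: that $\bu(\lie h[t]^\tau)$ is literally a symmetric (hence polynomial) algebra, and that the passage from the naive monomial generators $h_i\otimes t^{a_j r}$ to the $P_{i,r}$ is triangular. First I would verify that $\tau$ acts as the identity on $\lie h$: by \propref{defg0}, $\tau$ is defined on Chevalley generators by $x_i^\pm\mapsto x_i^\pm$ for $i\in I(j)$ and $x_j^\pm\mapsto \zeta^{\pm 1}x_j^\pm$, so on each bracket $[x_i^+,x_i^-]=h_i$ we get $\tau(h_i)=h_i$. Consequently $\lie h[t]^\tau=\lie h\otimes \bc[t^{a_j}]$ as an abelian Lie algebra, and therefore $\bu(\lie h[t]^\tau)=S(\lie h\otimes\bc[t^{a_j}])$ by the PBW theorem.

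Next I would produce two different vector space bases of $\lie h$. The set $\{h_i:i\in I\}$ is a basis because $|I|=n=\dim\lie h$ by the definition of rank. The set $\{h_i:i\in I(j)\cup\{0\}\}$ is a basis because, by the Lemma in \secref{delta0}, $\Delta_0$ is a simple system for $\lie g_0$, and $\lie g_0$ is of rank $n$ (its Cartan is $\lie h$). Tensoring either basis with the basis $\{t^{a_jr}:r\in\bz_+\}$ of $\bc[t^{a_j}]$ produces a basis of $\lie h[t]^\tau$, and symmetric algebras on a basis are by definition polynomial algebras in those variables. This already gives the lemma with the variables $h_i\otimes t^{a_j r}$ in place of $P_{i,r}$.

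Finally I would perform the triangular change of variables $h_i\otimes t^{a_j r}\leftrightsquigarrow P_{i,r}$ for $r\ge 1$. Filter $\bu(\lie h[t]^\tau)$ by the polynomial degree with each $h_i\otimes t^{a_j s}$ ($s\ge 1$) placed in degree one. From the defining recursion
\[
P_{i,r}=-\frac{1}{r}\sum_{p=1}^{r}(h_i\otimes t^{a_j p})P_{i,r-p},\qquad P_{i,0}=1,
\]
a straightforward induction on $r$ shows $P_{i,r}=-\tfrac{1}{r}(h_i\otimes t^{a_j r})+(\text{terms of filtration degree }\ge 2)$. Hence the $n\times n$ (in each fixed grade $r$) transition matrix between $\{h_i\otimes t^{a_j r}\}$ and $\{P_{i,r}\}$ is upper triangular in the filtration, with invertible diagonal entries $-1/r$, so the two generating sets determine the same polynomial algebra. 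Combining with the observations of the previous paragraph yields both claimed presentations.

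No step is a real obstacle; the only thing that could trip one up is conflating the two simple systems $\Delta$ and $\Delta_0$ and forgetting that they both span the same Cartan $\lie h$, which is exactly why two different polynomial presentations of the same algebra are available.
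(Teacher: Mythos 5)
Your proof is correct and is exactly the argument the authors leave implicit (the paper's entire ``proof'' is the phrase ``trivial from the Poincar\'e--Birkhoff--Witt theorem''): commutativity of $\lie h[t]^\tau$ gives a symmetric algebra, the two coroot bases $\{h_i: i\in I\}$ and $\{h_i : i \in I(j)\}\cup\{h_0\}$ of $\lie h$ give the two variable sets, and the triangular recursion identifies $P_{i,r}$ with $-\tfrac1r\,h_i\otimes t^{a_jr}$ modulo higher filtration degree. The one caveat --- present in the lemma as stated and not introduced by you --- is that the $P_{i,r}$ with $r\ge 1$ generate only $\bu(\lie h\otimes t^{a_j}\bc[t^{a_j}])$, so the degree-zero generators $h_i\otimes 1$ must be adjoined to get all of $\bu(\lie h[t]^\tau)$; this is harmless for the paper's purposes, since $h\otimes 1$ acts on $w_\lambda$ by the scalar $\lambda(h)$ and therefore contributes nothing to $\ba_\lambda$.
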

 The comultiplication $\tilde{\Delta}: \bu(\lie g[t]^\tau)\to\bu(\lie g[t]^\tau)\otimes \bu(\lie g[t]^\tau)$ satisfies 
\begin{equation}\label{grouplike}\tilde{\Delta}(P_\alpha(u))=P_\alpha(u)\otimes P_\alpha(u),\ \ \alpha\in R^+.\end{equation}
 For $x\in \bu(\lie g[t]^\tau)$, $r\in\bz_+$, set 
$$x^{(r)}=\frac{1}{r!}x^r.$$

\subsection{} The following can be found in \cite[Lemma 1.3]{CP01} and is a reformulation of a result of Garland, \cite{G78}. 
\begin{lem} Let $x^\pm, h$ be the standard basis of $\lie{sl}_2$ and let $V$ be a representation of the subLie algebra  of $\lie{sl}_2[t]$ generated by $(x^+\otimes 1)$ and $(x^-\otimes t)$. Assume that $0\ne v\in V$ is such that $(x_\alpha^+\otimes t^{r})v=0$ for all $r\in\bz_+$. 
 For all $r\in\bz_+$ we have
   \begin{equation}\label{gar1}(x^+\otimes 1)^{(r)}(x^-\otimes t)^{(r)}v=(x^+\otimes t)^{(r)}(x^-\otimes 1)^{(r)}v= (-1)^r P_{r} v,\end{equation} where $$ \sum_{r\ge 0}{P}_ru^{r}=\exp\left(-\sum_{r\ge 1}\frac{h\otimes t^{r}}{r}u^{r}\right).$$
Further, \begin{equation}\label{gar2}(x^+\otimes 1)^{{(r)}}(x^-\otimes t)^{{(r+1)}}v =(-1)^r\sum_{s=0}^{r}(x^-\otimes t^{s+1}){P}_{ r-s}v.\end{equation}\hfill\qedsymbol\end{lem}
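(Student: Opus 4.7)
The plan is to prove both identities by induction on $r$, treating them as statements in $\bu(\lie{sl}_2[t])$ that hold modulo the left ideal generated by $\{x^+\otimes t^s:s\ge 0\}$. Writing $e_s=x^+\otimes t^s$, $f_s=x^-\otimes t^s$, $H_s=h\otimes t^s$, the basic commutators are
\[
[e_0,f_s]=H_s,\qquad [e_0,H_s]=-2e_s,\qquad [H_s,f_t]=-2f_{s+t}.
\]
Thus commuting $e_0$ across $f_1^{(r)}$ produces factors of $H_s$ and further copies of $e_s$; the latter annihilate $v$ by hypothesis.

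For the base case $r=0$ both sides of \eqref{gar1} equal $v$ and $P_0=1$. For the inductive step I would compute $e_0\cdot e_0^{(r)}f_1^{(r+1)}v=(r+1)e_0^{(r+1)}f_1^{(r+1)}v$ by first moving one $e_0$ through $f_1^{(r+1)}$ using the commutators above and then applying the induction hypothesis. The resulting terms re-assemble into the recursive formula
\[
P_{r+1}=-\frac{1}{r+1}\sum_{p=1}^{r+1}H_p P_{r+1-p}
\]
(with $a_j=1$ in the notation of the paper, since here the $\lie{sl}_2[t]$ lemma has no fixed-point constraint), which is precisely the defining recursion of the $P_r$; this establishes the first equality in \eqref{gar1}. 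The second equality in \eqref{gar1}, which exchanges the roles of $e_0,f_1$ with $e_1,f_0$, follows by the same argument after noting that the automorphism $x^\pm\otimes t^r\mapsto x^\mp\otimes t^r$ does not preserve things directly, so instead I would carry out a parallel induction using the commutator $[e_1,f_0]=H_1$ and the hypothesis $e_r v=0$, which supplies all the vanishing needed on the right.

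For \eqref{gar2} I would proceed similarly. The identity is equivalent, via the generating-function expression $\sum_r P_r u^r=\exp\bigl(-\sum_{r\ge 1}(H_r/r)u^r\bigr)$, to an equality of formal power series obtained by multiplying the \eqref{gar1}-identity on one side by a single $f_{s+1}$. Concretely, I would compute $e_0^{(r)}f_1^{(r+1)}v$ by writing $f_1^{(r+1)}=\frac{1}{r+1}f_1 f_1^{(r)}$, commuting $e_0^{(r)}$ through the leading $f_1$, and using \eqref{gar1} on the remainder. The commutator $[e_0^{(r)},f_1]$ expands, via the standard $\lie{sl}_2$ divided-power formula applied to the current algebra, as $\sum_{s\ge 0}$ of terms involving one $f_{s+1}$ multiplied by a divided power in $e_0$ times a polynomial in the $H_t$'s; applying this to $v$ and invoking $e_s v=0$ cuts the sum to exactly the form $\sum_{s=0}^{r}f_{s+1}P_{r-s}v$.

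The main obstacle is bookkeeping rather than conceptual: verifying that the commutator expansions produce precisely the recursion defining $P_r$, and that all error terms contain a rightmost factor $e_s$ which annihilates $v$. This is exactly the content of Garland's original argument in \cite{G78} and the reformulation in \cite{CP01}, and I would appeal to those references for the detailed bookkeeping once the inductive scheme and the role of the generating function are in place.
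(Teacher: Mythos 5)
Your outline is the standard Garland-style induction, and it matches what the paper itself does: the paper gives no proof of this lemma at all, simply citing \cite[Lemma 1.3]{CP01} and \cite{G78}, which is exactly where your proposal also defers the commutator bookkeeping. The commutation relations and the recursion $(r+1)P_{r+1}=-\sum_{p=1}^{r+1}(h\otimes t^p)P_{r+1-p}$ you identify are the correct ingredients, so the approach is sound and essentially the same as the cited source.
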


\subsection{Proof of Proposition \ref{fingen} }\label{prooffingen} Given $\alpha\in R^+$, it is easily seen that the elements $(x_\alpha^+\otimes t^{\boa_j(\alpha)})$ and $(x_\alpha^-\otimes t^{a_j-\boa_j(\alpha)})$ generate a subalgebra of $\lie g[t]^\tau$ which is  isomorphic 
 to the subalgebra   of $\lie{sl}_2[t]$ generated by $(x^+\otimes 1)$ and $(x^-\otimes t)$. Using the defining relations of $W(\lambda)$ and equation  \eqref{gar1} we  get that \begin{equation}\label{pgamma} P_{\alpha, r}w_\lambda=0,\ \ r\ge \lambda(h_\alpha)+1,\ \ \alpha\in R^+_0. \end{equation}
It also follows from Lemma \ref{wfinite} that $P_{j, r}w_\lambda=0$ for all $r>>0$. Using Lemma \ref{poly} we see that $\ba_\lambda$ is finitely generated by the images of the elements $$\{P_{i,r}: i\in I(j)\cup\{0\}, \  \ r\leq \lambda(h_i)\}.$$

We now prove that $W(\lambda)$ is a finitely generated $\ba_\lambda$--module. Fix an enumeration $\beta_1,\dots ,\beta_M$ of $R^+$. Using the Poincar\'{e}--Birkhof--Witt theorem it is clear that $W(\lambda)$ is spanned by elements of the form $X_1X_2\cdots X_M\bu(\lie h[t]^\tau)w_\lambda$ where each $X_p$ is either a constant or a monomial in the elements $\{(x^-_{\beta_p}\otimes t^s): s\in a_j\bz_+-\boa_j(\beta_p)\}$.  The length  of each $X_r$ is  bounded by  Lemma \ref{wfinite} and   equation  \eqref{gar2} proves that for any $\gamma\in R^+$ and $r\in\bz_+$, the element
$(x^-_\gamma\otimes t^{ra_j-\boa_j(\gamma)})\bu(\lie h[t]^\tau)w_\lambda$ is in the span of elements $\{(x_\gamma^-\otimes t^{sa_j-\boa_j(\gamma)})\bu(\lie h[t]^\tau)w_\lambda: 0\le s\le N\}$ for some $N$ sufficiently large.  An obvious induction on the length  of the product of monomials  shows that the values of $s$ are  bounded for each $\beta$ and the proof is complete.
\begin{rem}  Notice  that  the preceding argument  proves that the set $\wt W(\lambda)$ is finite. This is not obvious since $\wt W(\lambda)$ is not a subset of $\lambda-Q^+_0$.\end{rem}

\subsection{}\label{section3loc} Let  $\lambda\in P_0^+ $. Given any maximal ideal $\bi$ of $\ba_\lambda$ we define the local Weyl module, $$W(\lambda, \bi)= W(\lambda)\otimes_{\ba_\lambda} \ba_\lambda/\bi.$$  It follows from Proposition \ref{fingen} that $W(\lambda,\bi)$ is a finite--dimensional $\lie g[t]^\tau$--module in $\ti{\cal I}$ and $\dim W(\lambda,\bi)_\lambda=1$. A standard argument now proves that $W(\lambda,\bi)$ has a unique irreducible quotient which we denote as $V(\lambda,\bi)$. Moreover, $W(\lambda,\mathbf{I}_0)$ is a $\bz_+$--graded $\lie g[t]^\tau$--module and
 \begin{equation}\label{grirr} V(\lambda,\bi_0)\cong \ev_0^* V_{\lie g_0}(\lambda),\end{equation}  where   $\ev^*_0V$ is the representation of $\lie g[t]^\tau$ obtained by pulling back a  representation  $V$ of $\lie g_0$.

\subsection{}\label{canss} We now construct an explicit family of representations of $\lie g[t]^\tau$ which will be needed for our further study of $\ba_\lambda$.  
Given non--zero scalars $z_1,\dots, z_k$ such that $z_r^{a_j}\ne  z_s^{a_j}$ for all $1\le r\ne s\le k$  we have a canonical surjective morphism $$\lie g[t]^{\tau}\to \lie g_0\oplus \lie g^{\oplus k}\to 0,\ \ (x\otimes t^r)\to (\delta_{r,0}x, z_1^rx,\dots, z_k^rx).$$

 Given a representation $V$ of $\lie g$ and $z\ne 0$, we let $\ev_z^*V$ be the corresponding pull--back  representation of $\lie g[t]^\tau$; note that these representations are cyclic $\lie g[t]^\tau$--modules.
  Using the recursive formulae for $P_{\alpha,r}$ it is not hard to see that the following hold in the module $\ev^*_z V_{\lie g}(\lambda)$, $\lambda\in P^+$ and $\ev_0^*V_{\lie g_0}(\mu)$, $\mu\in P_0^+$:
\begin{gather*}\lie n^+[t]v_\lambda=0\ \  P_{i,r}v_\lambda= \binom{\lambda(h_{i })}{r}(-1)^rz^{a_jr}v_\lambda,\ \  \ i\in I,\ \ r\in\bn\\ \lie n^+[t]^\tau v_\mu=0,\
\ \ P_{i,r}v_\mu=0,\ \ i\in I,\ \ r\in\bn.
\end{gather*}

 The preceding discussion together with equation \eqref{grouplike} now proves the following result.
\begin{prop}\label{irr1}  Suppose  that $\lambda_1,\dots,\lambda_k\in P^+$ and $\mu\in P_0^+ $. Let  $z_1,\dots, z_k$ be non--zero complex numbers  such that $z_r^{a_j}\ne  z_s^{a_j}$ for all $1\le r\ne s\le k$. Then $$\ev^*_0 V_{\lie g_0}(\mu)\otimes\ev^*_{z_1}V_{\lie g}(\lambda_1)\otimes\cdots\otimes \ev^*_{z_k}V_{\lie g}(\lambda_k)$$ is an irreducible $\lie g[t]^\tau$--module. Moreover, $$\lie n^+[t]^\tau(v_\mu\otimes v_{\lambda_1}\otimes\cdots\otimes v_{\lambda_k})=0,\ \ \left(P_{i,r}-\pi_{i,r}\right)(v_\mu\otimes v_{\lambda_1}\otimes\cdots\otimes v_{\lambda_k})=0,\ \ i\in I,\ \  r\in\bz_+,$$ where $$\sum_{r\in\bz_+}\pi_{i,r}u^r=\prod_{s=1}^k(1-z_s^{a_j}u)^{\lambda_s(h_i)},\ \ i\in I.$$\hfill\qedsymbol
\end{prop}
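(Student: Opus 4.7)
The plan is to split the argument in two: first, verify the explicit highest weight identities in the statement; second, establish irreducibility by pulling back through the canonical surjection described at the start of \secref{canss}.

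For the identities, set $v=v_\mu\otimes v_{\lambda_1}\otimes\cdots\otimes v_{\lambda_k}$. Since $\lie n^+[t]^\tau\subset\lie n^+[t]$, the displayed formulas immediately before the proposition show that $\lie n^+[t]^\tau$ annihilates the highest weight vector of each tensor factor, and the Leibniz rule then gives $\lie n^+[t]^\tau v=0$. For the $P_i(u)$-eigenvalue on $v$, equation \eqref{grouplike} states that each $P_\alpha(u)$ is grouplike under $\tilde\Delta$; iterating this $(k+1)$-fold, the action of $P_i(u)$ on $v$ equals the product of its actions on the individual factors. The $\ev_0^*V_{\lie g_0}(\mu)$ factor contributes $1$, and each $\ev_{z_s}^*V_{\lie g}(\lambda_s)$ factor contributes $(1-z_s^{a_j}u)^{\lambda_s(h_i)}$ by the binomial theorem applied to the displayed eigenvalues. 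Multiplying over $s$ yields $\pi_i(u)=\prod_s(1-z_s^{a_j}u)^{\lambda_s(h_i)}$, as required.

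For irreducibility, the essential input is the canonical surjection $\varphi:\lie g[t]^\tau\twoheadrightarrow\lie g_0\oplus\lie g^{\oplus k}$ recorded in \secref{canss}. Because each tensor factor is defined as a pullback through $\ev_0$ (respectively $\ev_{z_s}$), and $\varphi$ composed with the projection onto each summand of $\lie g_0\oplus\lie g^{\oplus k}$ equals the corresponding evaluation map, the $\lie g[t]^\tau$-action on the tensor product factors through $\varphi$ and agrees with the outer tensor product action of $\lie g_0\oplus\lie g^{\oplus k}$ on $V_{\lie g_0}(\mu)\otimes V_{\lie g}(\lambda_1)\otimes\cdots\otimes V_{\lie g}(\lambda_k)$. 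Since an outer tensor product of finite-dimensional irreducibles over a direct sum of Lie algebras remains irreducible (a consequence of the density theorem applied to each summand), the module is irreducible over $\lie g_0\oplus\lie g^{\oplus k}$, and hence over $\lie g[t]^\tau$.

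The only non-formal point is the asserted surjectivity of $\varphi$, and this is the main (mild) obstacle. It reduces to a Lagrange interpolation problem: since the values $0,z_1^{a_j},\ldots,z_k^{a_j}$ are pairwise distinct, one can find polynomials in $\bc[t^{a_j}]$ taking any prescribed values at those points. Combined with the grading decomposition $\lie g[t]^\tau=\bigoplus_{m=0}^{a_j-1}\lie g_m\otimes t^m\bc[t^{a_j}]$, the formula $x\otimes t^m g(t^{a_j})\mapsto(\delta_{m,0}g(0)x,\,z_1^mg(z_1^{a_j})x,\ldots,z_k^mg(z_k^{a_j})x)$ with suitably chosen $g$ hits any element of the target. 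With surjectivity in hand, the remainder of the argument is routine.
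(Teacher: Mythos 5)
Your proof is correct and follows essentially the same route as the paper: the paper's argument is precisely the ``preceding discussion'' of Section~\ref{canss}, namely the surjection $\lie g[t]^\tau\twoheadrightarrow\lie g_0\oplus\lie g^{\oplus k}$ (which makes the module a pullback of an irreducible outer tensor product) together with the grouplike property \eqref{grouplike} for the eigenvalue computation. Your Lagrange-interpolation justification of surjectivity is a detail the paper leaves implicit, but it is the right one.
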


\begin{rem} In particular, the modules constructed in the preceding proposition are modules of the form $V(\lambda, \bi)$ where $\lambda=\mu+\lambda_1+\cdots+\lambda_k$.  
The converse statement is also true; this follows from the work of \cite{NSS}. An independent proof can be deduced once we complete our study of $\ba_{\lambda}$.
\end{rem}

\section{The algebra \texorpdfstring{$\ba_\lambda$}{A} as a Stanley--Reisner ring}\label{section4}
 For the rest of this section we denote by $\text{Jac}({\ba_{\lambda}})$ the Jacobson radical of $\ba_{\lambda}$, and use freely the fact that the Jacobson radical of a finitely--generated commutative algebra coincides with its nilradical.
\subsection{} The main result of this section is the following.
\begin{thm} \label{alambda}
The algebra   $\ba_\lambda/{\rm{Jac}}(\ba_\lambda)$ is isomorphic to the algebra $\tilde\ba_\lambda$ which is the quotient  of $\bu(\lie h[t]^{\tau})$ by the ideal generated by the elements \begin{equation}\label{firstset}P_{i,s},\ \ i\in I(j),\ \  s\ge \lambda(h_i)+1,\end{equation} and \begin{equation}\label{secondset} P_{1,r_1}\cdots P_{n,r_n},\ \  \sum_{i=1}^n\boa_i^\vee (\alpha_0)r_i>\lambda(h_0).\end{equation} Moreover, ${\rm{Jac}}(\ba_\lambda)$ is generated by the images of the elements in  \eqref{secondset} and ${\rm{Jac}}(\ba_\lambda) = 0 $  if $\boa_j^\vee (\alpha_0) = 1$.
\end{thm}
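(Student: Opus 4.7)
The plan is to construct a surjective graded algebra map $\phi:\tilde\ba_\lambda\twoheadrightarrow\ba_\lambda/\mathrm{Jac}(\ba_\lambda)$ and prove it is injective by a Nullstellensatz argument; the remaining assertions on $\mathrm{Jac}(\ba_\lambda)$ will follow, with one extra ingredient for the case $\boa_j^\vee(\alpha_0)=1$.

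For surjectivity I need both defining families of $\tilde\ba_\lambda$ to vanish on the right. The relations \eqref{firstset} vanish already in $\ba_\lambda$: they are \eqref{pgamma} applied to $\alpha=\alpha_i$ for $i\in I(j)$. For \eqref{secondset}, I would show each monomial lies in every maximal ideal of $\ba_\lambda$ using Proposition \ref{irr1} combined with the classification in \cite{NSS}: every irreducible quotient $V(\lambda,\bi)$ is of the form $\ev_0^{*}V_{\lie g_0}(\mu)\otimes\bigotimes_{s}\ev_{z_s}^{*}V_{\lie g}(\lambda_s)$ with $\mu+\sum_s\lambda_s=\lambda$, on which $P_{i,r}$ acts as $\pi_{i,r}=[u^r]\prod_s(1-z_s^{a_j}u)^{\lambda_s(h_i)}$. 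For this scalar to be nonzero one needs $r\le\lambda(h_i)-\mu(h_i)$, so if $\prod_i\pi_{i,r_i}\ne 0$ then
\[
\sum_i\boa_i^\vee(\alpha_0)\,r_i\ \le\ \sum_i\boa_i^\vee(\alpha_0)\bigl(\lambda(h_i)-\mu(h_i)\bigr)\ =\ \lambda(h_0)-\mu(h_0)\ \le\ \lambda(h_0),
\]
contradicting the hypothesis on the monomial. Thus every generator in \eqref{secondset} lies in $\mathrm{Jac}(\ba_\lambda)$ and $\phi$ is a well-defined surjection.

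For injectivity, note that $\tilde\ba_\lambda$ is by construction a Stanley--Reisner ring (each generator of \eqref{secondset} is a squarefree monomial in the polynomial algebra on $\{P_{i,r}\}$, since the first coordinates $i$ are pairwise distinct), hence reduced. Both algebras are then reduced, finitely generated commutative $\bc$-algebras, so by the Nullstellensatz it suffices to show that every character $\chi:\tilde\ba_\lambda\to\bc$ factors through $\phi$. For such a $\chi$, the series $\pi_i(u):=\sum_r\chi(P_{i,r})u^r$ are polynomials: bounded degree for $i\in I(j)$ by \eqref{firstset}, and for $i=j$ by specializing \eqref{secondset} at $r_i=0$ for $i\ne j$. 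Factor them simultaneously over the union of their roots (all nonzero since $\pi_i(0)=1$) as $\pi_i(u)=\prod_s(1-z_s^{a_j}u)^{n_{s,i}}$ with distinct $z_s^{a_j}$. Set $\lambda_s=\sum_i n_{s,i}\omega_i\in P^+$ and $\mu=\lambda-\sum_s\lambda_s$. The first set yields $\mu(h_i)\ge 0$ for $i\in I(j)$; applying \eqref{secondset} at the leading coefficients $r_i=\sum_s n_{s,i}$, where $\prod_i\pi_{i,r_i}=\prod_s(-z_s^{a_j})^{\sum_i n_{s,i}}\ne 0$, forces $\sum_s\lambda_s(h_0)\le\lambda(h_0)$, i.e.\ $\mu(h_0)\ge 0$, so $\mu\in P_0^+$. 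The evaluation module of Proposition \ref{irr1} built from $(\mu,(\lambda_s,z_s))$ then realizes $\chi$.

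The claim that $\mathrm{Jac}(\ba_\lambda)$ is generated by the images of \eqref{secondset} is immediate from the isomorphism, since \eqref{firstset} already vanishes in $\ba_\lambda$. For the last claim, when $\boa_j^\vee(\alpha_0)=1$ I aim to show that each generator $P_{1,r_1}\cdots P_{n,r_n}$ of \eqref{secondset} vanishes directly in $\ba_\lambda$, not merely modulo the Jacobson radical. The plan is to exploit the factorization $P_{\alpha_0}(u)=P_j(u)\prod_{i\ne j}P_i(u)^{m_i}$, in which $P_j$ now enters only linearly, together with the identities $P_{\alpha,r}w_\lambda=0$ for the various long roots $\alpha\in R_0^+$, to isolate each squarefree monomial term-by-term. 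I expect this last step to be the principal technical obstacle, since for $\boa_j^\vee(\alpha_0)\ge 2$ the analogous extraction genuinely fails and $\mathrm{Jac}(\ba_\lambda)$ is nonzero in general.
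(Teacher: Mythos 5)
Your surjectivity and injectivity arguments are essentially sound, but they take a different route from the paper on the surjectivity side, and that choice is exactly what leaves your final claim unproven. For surjectivity you invoke the classification of finite--dimensional irreducibles of $\lie g[t]^\tau$ from \cite{NSS} to conclude that every generator in \eqref{secondset} kills the highest weight line of every irreducible quotient $V(\lambda,\bi)$, hence lies in every maximal ideal. This is a legitimate (if externally dependent) argument for membership in ${\rm{Jac}}(\ba_\lambda)$ --- note though that the paper deliberately avoids it, since it wants the structure of $\ba_\lambda$ to yield an \emph{independent} proof of that very classification. Instead the paper proves, by an iterated Garland-type computation (Lemma \ref{relat1}, equation \eqref{step}) along the chain of long roots $\beta_0=\alpha_0$, $\beta_{r+1}=s_{i_r}\beta_r$ terminating in a simple root, the exact identity \eqref{first1}, which specializes to $P_{1,r_1}^{\boa_1^\vee(\alpha_0)}\cdots P_{n,r_n}^{\boa_n^\vee(\alpha_0)}w_\lambda=0$ whenever $\sum_i\boa_i^\vee(\alpha_0)r_i>\lambda(h_0)$ (equation \eqref{first2}). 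This is an identity \emph{in} $\ba_\lambda$, not merely modulo nilpotents. Your injectivity argument is the same as the paper's (reducedness of $\tilde\ba_\lambda$ plus realization of characters by the evaluation modules of Proposition \ref{irr1}); your use of the top-degree coefficient to get $\mu(h_0)\ge 0$ is a clean shortcut of the paper's coefficient computation.

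The genuine gap is the last assertion, ${\rm{Jac}}(\ba_\lambda)=0$ when $\boa_j^\vee(\alpha_0)=1$, which you explicitly leave as a ``plan.'' Your classification-based route structurally cannot reach it: it only sees $\ba_\lambda$ through its maximal ideals, i.e.\ through the reduced quotient, so it can never certify that a generator of \eqref{secondset} is actually zero in $\ba_\lambda$ rather than nilpotent. Your proposed substitute --- expanding $P_{\alpha_0}(u)=\prod_i P_i(u)^{\boa_i^\vee(\alpha_0)}$ against the relations $P_{\alpha_0,r}w_\lambda=0$, $r>\lambda(h_0)$, from \eqref{pgamma} --- only yields that the \emph{sum} over all tuples $(r_i)$ with $\sum_i r_i=r$ of the corresponding squarefree monomials annihilates $w_\lambda$; there is no formal way to isolate an individual monomial from that sum, and the analogous relations for other long roots in $R_0^+$ do not disentangle it either. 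The paper's identity \eqref{first2} is precisely the term-by-term statement you need, and it is obtained only through the explicit $\lie{sl}_2$-type manipulations of Section 4, not from generating-function bookkeeping. You would also need the root-theoretic input of Lemma \ref{factordi} --- that $\boa_j^\vee(\alpha_0)=1$ forces $\boa_i^\vee(\alpha_0)\le 1$ for all $i$, so that the exponents in \eqref{first2} are all $0$ or $1$ --- which itself requires an argument (it uses that $\alpha_j$ is short and a separate check for $F_4$). As written, your proof establishes the presentation of $\ba_\lambda/{\rm{Jac}}(\ba_\lambda)$ and the description of the generators of ${\rm{Jac}}(\ba_\lambda)$, but not the vanishing of ${\rm{Jac}}(\ba_\lambda)$ in the case $\boa_j^\vee(\alpha_0)=1$.
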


\begin{ex} We discuss the statement of the theorem in the case of $(B_n, D_n)$. Recall that in this case, $h_0 = h_{n-1} + h_n$ and so $\boa_j^{\vee}(\alpha_0) = 1$. Thus, ${\rm{Jac}}(\ba_\lambda) = 0 $ and \eqref{secondset} becomes \[ P_{n-1, r_{n-1}}P_{n,r_n} , \ r_{n-1} + r_n > \lambda(h_0).\]
\end{ex}

 Before proving \thmref{alambda} we note several interesting consequences.

\subsection{} \label{stanleyreisner}

We recall the definition of a Stanley--Reisner ring, and the correspondence between Stanley--Reisner rings and abstract simplicial complexes  (for more details, see \cite{FMS2014}). 

Given a monomial $m = x_{i_1} \cdots x_{i_\ell}$ we say that $m$ is squarefree if $ i_1 < \cdots < i_\ell.$  We say an ideal of $\mathbb{C}[x_1, \dots, x_n]$ is a squarefree monomial ideal if it is generated by squarefree monomials.  A quotient of a polynomial ring by a squarefree monomial ideal is called a Stanley--Reisner ring.

We now prove the following consequence of \thmref{alambda}.

\begin{prop} \label{alambdasr} The algebra $\ba_\lambda/{\rm{Jac}}(\ba_\lambda)$ is a Stanley--Reisner ring with Hilbert series  $$\mathbb{H}(\ba_\lambda/{\rm{Jac}}(\ba_\lambda))=\sum_{\sigma\in \Sigma_{\lambda}}\prod_{P_{i,r}\in \sigma}\frac{t^{a_jr}}{1-t^{a_jr}},$$
where $\Sigma_{\lambda}$ denotes the abstract simplicial complex corresponding to $\ba_\lambda/{\rm{Jac}}(\ba_\lambda)$. Moreover, if $\boa_j^{\vee}(\alpha_0) = 1, $  the Krull dimension of $\ba_{\lambda}$  is given by $$d_\lambda=\lambda(h_0) + \sum_{i: \boa_i(\alpha_0)=0}\lambda(h_i). $$ If in addition we have $|\{i : \boa_i (\alpha_0) >0\}|=2,$ then the algebra $\ba_{\lambda}$ is Koszul and Cohen-Macaulay.

\end{prop}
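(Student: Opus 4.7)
The plan is to read off every assertion from the explicit presentation of $\tilde\ba_\lambda:=\ba_\lambda/\mathrm{Jac}(\ba_\lambda)$ provided by \thmref{alambda}.

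\textbf{Stanley--Reisner identification and Hilbert series.} By \lemref{poly} and \thmref{alambda}, $\tilde\ba_\lambda$ is the quotient of the polynomial ring $\bc[\{P_{i,r}:i\in I,\ r\in\bn\}]$ by an ideal $I$. Each generator in \eqref{firstset} is a single variable; each generator $P_{1,r_1}\cdots P_{n,r_n}$ of \eqref{secondset}, once the factors with $r_i=0$ are identified with the unit, involves at most one variable per index $i$ and is therefore squarefree. Hence $I$ is a squarefree monomial ideal and $\tilde\ba_\lambda$ is, by definition, the Stanley--Reisner ring of an abstract simplicial complex $\Sigma_\lambda$ whose faces are exactly the sets of surviving vertices $P_{i,r}$ whose product is nonzero in $\tilde\ba_\lambda$. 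Noting that $P_{i,r}$ has grade $a_jr$ (it is the coefficient of $u^r$ in an exponential generating series in the variables $h_i\otimes t^{a_js}$), the standard Stanley--Reisner expansion $\sum_{\sigma}\prod_{x\in\sigma}t^{\deg x}/(1-t^{\deg x})$ of the Hilbert series produces exactly the claimed formula.

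\textbf{Krull dimension.} When $\boa_j^\vee(\alpha_0)=1$ we have $\mathrm{Jac}(\ba_\lambda)=0$ by \thmref{alambda}, so $\ba_\lambda=\tilde\ba_\lambda$ and its Krull dimension equals the maximal cardinality of a face of $\Sigma_\lambda$. To maximise $|\sigma|$ I split the contributions by index. For $i$ with $\boa_i(\alpha_0)=0$ the relations \eqref{secondset} are vacuous, so all $\lambda(h_i)$ vertices $P_{i,1},\ldots,P_{i,\lambda(h_i)}$ may be included freely. For the remaining indices $i\in J:=\{i:\boa_i(\alpha_0)>0\}$, any face can be enlarged by adding every $P_{i,r}$ with $r\le M_i$, where $M_i:=\max\{r:P_{i,r}\in\sigma\}$ (understood to be $0$ when absent), after which the relations translate into the single constraint $\sum_{i\in J}\boa_i^\vee(\alpha_0)M_i\le\lambda(h_0)$. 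The linear-programming maximum of $\sum M_i$ subject to this constraint is attained at $M_j=\lambda(h_0)$ and $M_i=0$ otherwise, since $\boa_j^\vee(\alpha_0)=1$ is the smallest positive integer coefficient and the $j$--vertices are not bounded from above by \eqref{firstset}. Summing yields $d_\lambda$.

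\textbf{Koszul and Cohen--Macaulay.} Under the additional hypothesis $|J|=2$, write $J=\{j,j'\}$ and set $b:=\boa_{j'}^\vee(\alpha_0)$. Only the indices $j,j'$ contribute to the sum in \eqref{secondset}, so modulo the linear generators of \eqref{firstset} (which merely delete vertices from the polynomial ring) the minimal generators of $I$ are precisely the quadratic monomials $P_{j,r_j}P_{j',r_{j'}}$ with $r_j+br_{j'}>\lambda(h_0)$. Consequently $\Sigma_\lambda$ is a flag complex and Fr\"oberg's theorem gives that $\ba_\lambda$ is Koszul. For Cohen--Macaulayness I decompose $\Sigma_\lambda$ as the simplicial join of the full simplex on the vertices $\{P_{i,r}:i\notin J\}$ with the bipartite subcomplex $\Sigma'$ on $\{P_{j,r}\}\cup\{P_{j',r}\}$; since $\bc[\Sigma_1*\Sigma_2]\cong\bc[\Sigma_1]\otimes\bc[\Sigma_2]$ and a polynomial ring is Cohen--Macaulay, it suffices to establish the property for $\Sigma'$. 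I intend to do this by exhibiting a shelling: the facets of $\Sigma'$ correspond, after down-closing in each colour, to admissible pairs $(R,S)$ saturating the constraint $R+bS\le\lambda(h_0)$ subject to the range bounds of \eqref{firstset} on the $j'$--side, and a suitable lexicographic order on these pairs should provide the shelling.

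The main obstacle I anticipate is the final Cohen--Macaulay verification. Even after reducing to $\Sigma'$, one must identify its facets with care, handle the asymmetry between $j$ (with no upper bound from \eqref{firstset}) and $j'$ (bounded by $\lambda(h_{j'})$), first confirm that $\Sigma'$ is pure of the expected dimension, and then produce the shelling explicitly or else verify Reisner's acyclicity criterion on every link.
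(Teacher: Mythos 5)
Your proposal follows the paper's route almost exactly: read everything off the presentation in Theorem \ref{alambda}, get the Hilbert series and Krull dimension from the standard Stanley--Reisner dictionary, get Koszulness from Fr\"oberg's theorem once the nonlinear generators are seen to be quadratic, and get Cohen--Macaulayness from purity plus shellability. Your Krull-dimension computation is actually more explicit than the paper's (which simply cites the correspondence), and your join decomposition $\Sigma_\lambda=\overline{\{P_{i,r}:\boa_i(\alpha_0)=0\}}*\Sigma'$ is a clean way to isolate the only nontrivial factor; the paper instead shells the whole complex at once in Lemma \ref{shell}, which amounts to the same thing since the full-simplex factor is harmless.

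The genuine gap is the Cohen--Macaulay step, which you leave as an intention rather than a proof, and as set up it would not go through. You work with a general $b:=\boa_{j'}^{\vee}(\alpha_0)$, but for $b\ge 2$ the complex $\Sigma'$ defined by $M_j+bM_{j'}\le\lambda(h_0)$ is in general \emph{not} pure (e.g.\ $b=2$, $\lambda(h_0)=2$ gives facets $\{P_{j,1},P_{j,2}\}$ and $\{P_{j',1}\}$), so both purity and Proposition \ref{prop54} would fail. The missing ingredient is the observation, contained in the proof of Lemma \ref{factordi}, that $\boa_j^{\vee}(\alpha_0)=1$ forces $\boa_i^{\vee}(\alpha_0)\le 1$ for all $i$, hence $b=1$. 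With $b=1$ the facets are exactly the staircase sets $F_r$ consisting of $P_{j,1},\dots,P_{j,\lambda(h_0)-r}$ and $P_{j',1},\dots,P_{j',r}$ for $0\le r\le\min\{\lambda(h_0),\lambda(h_{j'})\}$ (together with the full simplex on the $\boa_i(\alpha_0)=0$ vertices), purity follows because any smaller face can be enlarged by down-closing and then increasing $M_j$, and the enumeration by increasing $r$ is a shelling since $\bar F_r\cap\bigl(\bigcup_{q<r}\bar F_q\bigr)=\overline{F_r\setminus\{P_{j',r}\}}$ is pure of dimension $\dim F_r-1$. This is precisely the paper's Lemma \ref{shell}; you anticipated the right structure but did not supply the fact $b=1$ that makes it work, nor the explicit facets and shelling order.
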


\begin{ex} In the case  $(B_n, D_n)$, we have since  $\alpha_0 = \alpha_{n-1} + 2 \alpha_n$ and  $h_0=h_{n-1}+h_ n$  that  $\ba_\lambda$ is Koszul and Cohen--Macauley.  
\end{ex}

To see how \propref{alambdasr} follows from Theorem \ref{alambda}, we need to understand  the Stanley--Reisner rings in terms of abstract simplicial complexes.  

\subsection{}

Let $k \in \mathbb{N}$ and let $X= \{ x_1, \dots , x_k \}$. An abstract simplicial complex $\Sigma$ on the set $X$ is a collection of subsets of $X$ such that if $A \in \Sigma$ and if $B \subset A$, then $B \in \Sigma.$ There is a well known correspondence between abstract simplicial complexes, and ideals in $\mathbb{C}[X] = \mathbb{C}[x_1, \dots , x_k]$ generated by squarefree monomials which is given as follows: if  $\Sigma$ is an abstract simplicial complex, let $\bj_{\Sigma} \subset \mathbb{C}[X]$ be the ideal generated by the elements of the set $$\{ x_{i_1} \cdots x_{i_r} \ | \ 1 \leq r \leq k, \{x_{i_1} , \dots, x_{i_r} \} \notin \Sigma  \}.$$ The following proposition can be found in \cite{FMS2014}.

\begin{prop} \label{srcorr}  Given any abstract simplicial complex, $\Sigma$ the ring  $ \mathbb{C}[X] / \bj_{\Sigma}$ is a Stanley--Reisner ring.  Conversely, any Stanley--Reisner ring is isomorphic to  $ \mathbb{C}[X] / \bj_{\Sigma}$ for some $X = \{x_1, \dots, x_k \}$ and some abstract simplicial complex $\Sigma$ on $X$.
\hfill\qed
\end{prop}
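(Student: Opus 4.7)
My plan is to deduce Proposition~\ref{alambdasr} directly from Theorem~\ref{alambda} via the dictionary of Proposition~\ref{srcorr}. First, I verify that the two families of generators of the defining ideal of $\tilde\ba_\lambda$ are squarefree monomials in the polynomial ring $\bu(\lie h[t]^\tau) = \bc[P_{i,r} : i\in I,\ r\in \bn]$: the generators in \eqref{firstset} are single variables, while those in \eqref{secondset} have the shape $\prod_{i : r_i > 0} P_{i, r_i}$, in which the indices $i$ are pairwise distinct, hence squarefree. Proposition~\ref{srcorr} then identifies $\tilde\ba_\lambda$ with the Stanley--Reisner ring of a uniquely determined simplicial complex $\Sigma_\lambda$. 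Since $P_{i, r}$ is homogeneous of degree $a_j r$ with respect to the grading inherited from $\lie g[t]^\tau$ (because $P_{i,r}$ is a polynomial expression in the $h_i\otimes t^{a_jp}$ whose exponents sum to $r$), the stated formula is exactly the standard graded Hilbert series expansion of a Stanley--Reisner ring.

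Next I compute the Krull dimension under $\boa_j^\vee(\alpha_0) = 1$; Theorem~\ref{alambda} then gives ${\rm Jac}(\ba_\lambda) = 0$, so $\ba_\lambda = \tilde\ba_\lambda$ and the Krull dimension equals the cardinality of a maximal face of $\Sigma_\lambda$. Split $I = I_0 \sqcup I_+$ where $I_0 = \{i : \boa_i(\alpha_0) = 0\}$ (automatically contained in $I(j)$) and $I_+ = \{i : \boa_i(\alpha_0) > 0\}$ (which contains $j$). Vertices $P_{i, r}$ with $i \in I_0$ are subject only to the bound $r \leq \lambda(h_i)$ from \eqref{firstset} and contribute nothing to the sum condition \eqref{secondset}, so they may be included freely; vertices with $i \in I_+$ are controlled by the weighted-sum condition on the maxima of the $\sigma_i$. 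Since $\boa_j^\vee(\alpha_0) = 1$ and $\boa_i^\vee(\alpha_0) \geq 1$ for every $i \in I_+$, a straightforward optimization shows that the maximum contribution from the $I_+$ vertices is exactly $\lambda(h_0)$, attained (for instance) by $\sigma_j = \{1, \ldots, \lambda(h_0)\}$ and $\sigma_i = \emptyset$ for the other $i \in I_+$. Combining the two contributions yields a largest face of size $d_\lambda$.

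For the Koszul and Cohen--Macaulay claims under the further assumption $|I_+| = 2$, let $k \in I(j)$ be the unique index with $\boa_k(\alpha_0) > 0$. The type~\eqref{secondset} generators simplify to monomials $P_{k, r_k} P_{j, r_j}$ subject to $\boa_k^\vee(\alpha_0) r_k + \boa_j^\vee(\alpha_0) r_j > \lambda(h_0)$, so the defining ideal is generated by squarefree monomials of degree at most two. Hence $\Sigma_\lambda$ is a flag complex, giving Koszulness. For Cohen--Macaulayness I decompose $\Sigma_\lambda$ as a join $\Sigma' * \Sigma''$, where $\Sigma'$ is the full simplex on the vertices $\{P_{i, r} : i \in I_0,\ r \leq \lambda(h_i)\}$ and $\Sigma''$ is the ``staircase'' complex supported on the $(P_{k, \cdot}, P_{j, \cdot})$-vertices cut out by the weighted-sum constraint. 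The simplex $\Sigma'$ is trivially shellable, and I propose to exhibit an explicit shelling of $\Sigma''$ by ordering its facets lexicographically according to $\max(\sigma_k)$. Since joins of shellable complexes are shellable and shellability implies Cohen--Macaulayness of the associated Stanley--Reisner ring, the result will follow.

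The main obstacle is the combinatorial verification that $\Sigma''$ is pure and shellable in full generality: while the picture is transparent in the uniform-weight case $\boa_k^\vee(\alpha_0) = \boa_j^\vee(\alpha_0) = 1$ relevant to $(B_n, D_n)$, extra care is required when the two weights differ, both to pin down the exact set of facets and to confirm that the proposed lexicographic order genuinely satisfies the shelling axioms.
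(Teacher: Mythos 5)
Your proposal does not address the statement you were asked to prove. The statement is Proposition~\ref{srcorr}, the basic dictionary between abstract simplicial complexes and squarefree monomial ideals: that $\mathbb{C}[X]/\bj_\Sigma$ is a Stanley--Reisner ring for every complex $\Sigma$, and conversely that every Stanley--Reisner ring arises as $\mathbb{C}[X]/\bj_\Sigma$ for some complex. Your write-up instead takes this dictionary for granted (you call it ``the dictionary of Proposition~\ref{srcorr}'') and uses it, together with Theorem~\ref{alambda}, to sketch a proof of Proposition~\ref{alambdasr} --- the Hilbert series, Krull dimension, Koszulness and Cohen--Macaulayness of $\ba_\lambda/{\rm Jac}(\ba_\lambda)$. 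Assuming the very statement under review and then proving a different, downstream result is a complete mismatch, not a proof.

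What is actually required is short and elementary. For the forward direction, observe that the listed generators $x_{i_1}\cdots x_{i_r}$ with $\{x_{i_1},\dots,x_{i_r}\}\notin\Sigma$ are squarefree monomials, so $\mathbb{C}[X]/\bj_\Sigma$ is a quotient by a squarefree monomial ideal, i.e.\ a Stanley--Reisner ring by definition. For the converse, given a squarefree monomial ideal $J\subset\mathbb{C}[x_1,\dots,x_k]$, set $\Sigma=\{A\subset X:\prod_{x\in A}x\notin J\}$; this is closed under taking subsets because $J$ is a monomial ideal (if a monomial lies in $J$ then so does every monomial it divides into), hence $\Sigma$ is an abstract simplicial complex, and one checks $\bj_\Sigma=J$ using the fact that a squarefree monomial ideal is determined by the squarefree monomials it contains. (In the paper this proposition is simply quoted from the literature, so no proof appears there either, but a blind proof attempt must supply the argument above rather than a proof of Proposition~\ref{alambdasr}.)
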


\subsection{} If $A \in \Sigma$, we call $A$ a simplex, and a simplex of $\Sigma$ not properly contained in another simplex of $\Sigma$ is called a facet. Let $\mathcal{F}(\Sigma)$ denote the set of facets of $\Sigma.$ For sets $B\subset A$, we have the Boolean interval $[B,A]=\{C: B\subset C \subset A\}$ and let $\bar{A}=[\emptyset,A]$. The dimension of $\Sigma$ is the largest of the dimension of its simplexes, i.e.
$$\text{dim} \Sigma=\max\{|A|: A\in \Sigma\}-1.$$
The simplicial complex $\Sigma$ is said to be pure if all elements of $\mathcal{F}(\Sigma)$ have the same cardinality. An enumeration $F_0,F_1,\dots, F_p$ of $\mathcal{F}(\Sigma)$ is called a shelling if for all $1\leq r\leq p$ the subcomplex
$$\left(\bigcup_{i=0}^{r-1}\bar{F_i}\right)\cap \bar{F_r} $$
is a pure abstract simplicial complex and $(\text{dim} F_r-1)$--dimensional.

The following can be found in \cite{FMS2014}.

\begin{prop}\label{prop54} If $\Sigma$ is pure and shellable, then the Stanley--Reisner ring of $\Sigma$ is Cohen--Macaulay.
\hfill\qed
\end{prop}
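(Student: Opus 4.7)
The plan is to reduce the statement to a purely topological fact via Reisner's criterion, which asserts that a Stanley--Reisner ring $\bc[\Sigma]$ is Cohen--Macaulay if and only if for every face $F\in\Sigma$ (including the empty face) one has $\tilde H_i(\text{link}_\Sigma(F);\bc)=0$ for all $i<\dim\text{link}_\Sigma(F)$. Once this criterion is granted, proving \propref{prop54} amounts to showing that every link of a face of a pure shellable complex has vanishing reduced homology below its top dimension.

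The first step would be to verify that pure shellability descends to links. Given a shelling $F_0,F_1,\dots,F_p$ of $\Sigma$ and a face $G\in\Sigma$, I would restrict attention to those facets containing $G$, say $F_{i_0},F_{i_1},\dots,F_{i_q}$ listed in the induced order, and consider the simplices $F_{i_s}\setminus G$. A direct unravelling of the shelling condition shows that $(\bigcup_{s<r}\overline{F_{i_s}\setminus G})\cap \overline{F_{i_r}\setminus G}$ is obtained from the corresponding intersection for $\Sigma$ by removing $G$, hence inherits purity and the correct dimension; so $\text{link}_\Sigma(G)$ is pure shellable of dimension $\dim\Sigma-|G|$.

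The second step would be to show by induction on $r$ that $\Sigma_r:=\bigcup_{i=0}^r \bar F_i$ is homotopy equivalent to a wedge of $(\dim\Sigma)$-spheres. Purity forces $\dim F_r=\dim\Sigma$ for each $r$, and the shelling hypothesis says that $(\bigcup_{i<r}\bar F_i)\cap \bar F_r$ is a pure subcomplex of $\partial\bar F_r$ of codimension one in $F_r$. There are then exactly two cases: either this intersection is a proper subcomplex of $\partial\bar F_r$, in which case it is collapsible (a shellable proper subcomplex of a simplex boundary is a ball), and attaching the simplex $\bar F_r$ along it preserves homotopy type; or it equals all of $\partial\bar F_r$, in which case attaching $\bar F_r$ is the same as coning off a $(\dim\Sigma-1)$-sphere, adding one top-dimensional sphere to the wedge. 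Applying this inductively to $\Sigma=\Sigma_p$ and to each link (which is again pure shellable by the first step), every link is a wedge of spheres of its top dimension, whence the relevant reduced homology vanishes and Reisner's criterion applies.

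The main obstacle I foresee is the first step: verifying that the induced ordering on facets through $G$ really is a shelling of $\text{link}_\Sigma(G)$ requires carefully matching the intersection subcomplexes in $\Sigma$ with those in the link and checking that purity is preserved after removing $G$. The homotopical induction in the second step and the invocation of Reisner's criterion are then fairly routine.
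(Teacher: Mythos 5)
The paper does not actually prove this proposition: it is quoted as a known fact and attributed to \cite{FMS2014}, so there is no in-paper argument to compare yours against. Your proposal is the classical topological proof and it is essentially correct. The two ingredients you isolate are exactly the standard ones: (a) links in a pure shellable complex are pure shellable, and (b) a pure shellable $d$-complex has reduced homology concentrated in degree $d$, after which Reisner's criterion finishes the job. For (a), the cleanest verification of your claim is via the equivalent form of the shelling condition (for each $s<r$ there exist $t<r$ and $v\in F_{i_r}\setminus F_{i_s}$ with $F_{i_r}\cap F_t=F_{i_r}\setminus\{v\}$): since $G\subseteq F_{i_r}\cap F_{i_s}$ forces $v\notin G$, the witness facet $F_t$ automatically contains $G$, so the induced order really is a shelling of the link. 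For (b), note that Reisner's criterion only needs homology, so you can replace the homotopy-equivalence induction by the long exact sequence of the pair $(\Sigma_r,\Sigma_{r-1})$ together with excision, which sidesteps the need to show the attaching map is null-homotopic (the full ``wedge of spheres'' statement requires that extra remark, and some care in dimensions $\le 1$). Be aware that there is also a purely algebraic route, which is what some sources (including the style of \cite{FMS2014}) prefer: the shelling yields a short exact sequence
$$0\to \bc[\Sigma_r]\to \bc[\Sigma_{r-1}]\oplus \bc[\bar F_r]\to \bc[\Sigma_{r-1}\cap \bar F_r]\to 0$$
from which one proves $\operatorname{depth}\bc[\Sigma_r]=\dim\Sigma+1$ by induction on $r$, with no topology at all. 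Either way your outline is sound; the only genuinely delicate point is the link lemma, which you correctly flagged.
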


\subsection{} \label{stanleyreisnerlemma} It is immediate from \eqref{firstset} and \eqref{secondset} that $\ba_\lambda/{\rm{Jac}}(\ba_\lambda)$ is a Stanley--Reisner ring.  Let $\Sigma_{\lambda}$ denote the corresponding abstract simplicial complex. We have the following lemma.

\begin{lem}\label{shell}
Assume that $\boa_j^{\vee}(\alpha_0) = 1$ and $\{i : \boa_i (\alpha_0) >0\}=\{s,j\}$.
Then the simplicial complex $\Sigma_\lambda$ is pure and $\{F_0,\dots,F_{\min\{\lambda(h_0),\lambda(h_{s})\}}\}$ defines a shelling, where
$$F_r=\left(\prod_{\substack{i : \boa_i(\alpha_0)=0\\ 1\leq r_i \leq \lambda(h_i)}} P_{i,r_i}\right) P_{j,1}\cdots P_{j,\lambda(h_0)-r}P_{s,1}\cdots P_{s,r},\ \ 0\leq r\leq \min\{\lambda(h_0),\lambda(h_{s})\}.$$
\proof
Let $F$ a facet of $\Sigma_{\lambda}$, i.e., $F$ is not contained properly in another simplex of $\Sigma_{\lambda}$. It is clear that the cardinality of $F$ is less or equal to $d_{\lambda}$. If it is strictly less, then $P_{i,r}F$ is a face of $\Sigma_{\lambda}$ for some $i$ and $r$, which is a contradiction. Hence all facets have the same cardinality. The shelling property is straightforward to check.
\endproof
\end{lem}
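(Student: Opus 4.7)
The plan is to identify the minimal non-faces of $\Sigma_\lambda$ arising from the relations \eqref{firstset}--\eqref{secondset}, deduce that the listed $F_r$ enumerate all facets, and then compute the intersections $\bar F_r \cap \bar F_{r'}$ directly to verify the shelling condition.

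First, I would unpack the generators under the hypothesis. Using the convention $P_{i,0}=1$ built into the second family, only the indices $s$ and $j$ contribute to $\sum_i \boa_i^\vee(\alpha_0)r_i$, so the minimal non-faces of $\Sigma_\lambda$ reduce to three types: the singletons $\{P_{i,r}\}$ with $r > \lambda(h_i)$ for $i\in I(j)$; the singletons $\{P_{j,r}\}$ and $\{P_{s,r}\}$ coming from the $n$-ary relation when all but one of the $r_i$ vanish; and the pairs $\{P_{s,r_s},P_{j,r_j}\}$ with $\boa_s^\vee(\alpha_0) r_s + r_j > \lambda(h_0)$. A short check, using $\alpha_0^\vee = \alpha_0$ (since $\alpha_0$ is long, by Remark \ref{rem1r}(i)) and comparing coefficients of $\alpha_s$ and $\alpha_j$, verifies that $\boa_s^\vee(\alpha_0)=1$ under the hypothesis, so the pair condition simplifies to $r_s + r_j > \lambda(h_0)$.

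Next, I would establish purity. Let $F$ be a facet. No variable $P_{i,r}$ with $\boa_i(\alpha_0)=0$ appears in any minimal non-face beyond its own singleton bound, so maximality of $F$ forces every such $P_{i,r_i}$ with $1 \le r_i \le \lambda(h_i)$ to lie in $F$. Writing $R_s=\{r:P_{s,r}\in F\}$ and $R_j=\{r:P_{j,r}\in F\}$, a downward-closure step (inserting a smaller index never creates a new non-face) gives $R_s=\{1,\dots,M_s\}$ and $R_j=\{1,\dots,M_j\}$. The remaining pair relations collapse to $M_s+M_j \le \lambda(h_0)$, which maximality upgrades to equality subject to $0 \le M_s \le \lambda(h_s)$. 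Hence every facet equals $F_r$ for a unique $r=M_s\in\{0,\dots,\min(\lambda(h_0),\lambda(h_s))\}$ and has cardinality $d_\lambda$, proving purity.

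Finally, for the shelling, I would fix $r \ge 1$ and observe that for any $r'<r$,
\[
F_r \cap F_{r'} \;=\; F_r \setminus \{P_{s,r'+1},\dots,P_{s,r}\},
\]
so $\bar F_r \cap \bar F_{r'}$ consists of the subsets of $F_r$ avoiding $\{P_{s,r'+1},\dots,P_{s,r}\}$. Taking the union over $r'<r$, the choice $r'=r-1$ gives the weakest avoidance, namely $P_{s,r}\notin S$, and every smaller $r'$ demands strictly more. Therefore
\[
\bar F_r \cap \bigcup_{r'<r}\bar F_{r'} \;=\; \overline{\,F_r\setminus\{P_{s,r}\}\,},
\]
a full Boolean interval and hence pure of dimension $|F_r|-2=\dim F_r - 1$, which is exactly the shelling condition. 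The main obstacle, beyond the bookkeeping, is confirming $\boa_s^\vee(\alpha_0)=1$ under the hypothesis, since otherwise both the explicit list of facets and the shelling computation would fail.
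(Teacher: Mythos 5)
Your proof is correct and follows essentially the same route as the paper's (very terse) argument: you classify the facets explicitly to obtain purity, and you verify the shelling condition by computing that $\bar F_r\cap\bigcup_{r'<r}\bar F_{r'}$ is the single Boolean interval $\overline{F_r\setminus\{P_{s,r}\}}$; both the classification (intervals $R_s=\{1,\dots,M_s\}$, $R_j=\{1,\dots,M_j\}$ with $M_s+M_j=\lambda(h_0)$ forced by maximality) and the intersection computation are right, and they supply the details the paper dismisses as ``straightforward to check.'' The one step you under-justify is the claim that $\boa_s^{\vee}(\alpha_0)=1$. Since $\alpha_0$ is long, $\boa_i^{\vee}(\alpha_0)=\boa_i(\alpha_0)d_{\alpha_i}^{-1}$, and ``comparing coefficients of $\alpha_s$ and $\alpha_j$'' does not by itself exclude $\boa_s^{\vee}(\alpha_0)\ge 2$; the implication $\boa_j^{\vee}(\alpha_0)=1\Rightarrow\boa_i^{\vee}(\alpha_0)\le 1$ for all $i$ is precisely the content of Lemma~\ref{factordi}, whose proof uses that $\alpha_j$ must be short, the identity $h_{\alpha_0-\alpha_j}=d_jh_0-h_j$, and a separate inspection for $F_4$. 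You should cite or reproduce that argument rather than gesture at it --- you are right that it is the load-bearing point, since for $\boa_s^{\vee}(\alpha_0)\ge 2$ the sets $F_r$ with $r\ge 1$ would fail to be faces because of the relation \eqref{secondset} applied to $r_s=r$, $r_j=\lambda(h_0)-r$.
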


\subsection{Proof of Proposition~\propref{alambdasr}} 
The statement of the Hilbert series and Krull dimension are immediate consequences of the correspondense between $\ba_\lambda/{\rm{Jac}}(\ba_\lambda)$ and $\Sigma_\lambda$, and can be found in \cite{FMS2014}. If $\boa_j^{\vee}(\alpha_0) = 1$ and $|\{i : \boa_i (\alpha_0) >0\}|=2,$  then it follows from Theorem \ref{alambda} that $\ba_\lambda$ is a   quotient of a polynomial algebra by a quadratic monomial ideal, and hence  Koszul (see \cite{Fr75}). Proposition~\ref{prop54} and Lemma~\ref{shell} together show that $ \ba_{\lambda} $ is Cohen--Macaulay.

\subsection{} In this subsection, we note another  interesting consequence of \thmref{alambda}.
\begin{prop}\label{inforone} Let $\lambda\in P_0^+$. Then  $\ba_\lambda/\rm{Jac}(\ba_\lambda)$ is either infinite--dimensional or isomorphic to $\bc$. Moreover, the latter is true iff the following two conditions hold:
\begin{enumerit}
\item[(i)] for $i\in I(j)$,   we have   $\lambda(h_i)> 0$ only if $\boa_i^\vee(\alpha_0)>0$,
\item[(ii)]$\lambda(h_0) < \boa_i^\vee(\alpha_0)$ if $i=j$ or if   $i\in I(j)$ and $\lambda(h_i)>0$.
\end{enumerit}

\end{prop}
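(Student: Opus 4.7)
The plan is to exploit the explicit monomial presentation of $\tilde{\ba}_\lambda := \ba_\lambda/\mathrm{Jac}(\ba_\lambda)$ provided by \thmref{alambda}. Since $\ba_\lambda$ is a finitely generated commutative algebra, its Jacobson radical equals its nilradical, so $\tilde{\ba}_\lambda$ is reduced, and it is $\bz_+$--graded with $(\tilde{\ba}_\lambda)_0 = \bc$.

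For the dichotomy, I would first observe that any finitely generated graded reduced commutative $\bc$--algebra $R$ with $R_0 = \bc$ is either isomorphic to $\bc$ or is infinite--dimensional: if $R$ were finite--dimensional it would be Artinian and reduced, hence a product of field extensions of $\bc$, while the grading with $R_0 = \bc$ forces every element of the augmentation ideal $R_+ = \bigoplus_{n\geq 1}R_n$ to be nilpotent and hence, by reducedness, zero. Applied to $R = \tilde{\ba}_\lambda$, this reduces the question to characterizing when $\tilde{\ba}_\lambda = \bc$, equivalently, when every generator $P_{i,r}$ ($i \in I$, $r \geq 1$) of the polynomial ring $\bu(\lie h[t]^\tau)$ (see \lemref{poly}) maps to zero.

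By \thmref{alambda} the defining ideal of $\tilde{\ba}_\lambda$ is a monomial ideal, and in such a quotient of a polynomial ring a single variable $P_{i,r}$ maps to zero precisely when it appears among the listed generators: either (a) $i \in I(j)$ and $r > \lambda(h_i)$, or (b) $\boa_i^\vee(\alpha_0)\, r > \lambda(h_0)$, viewing $P_{i,r}$ as the monomial in \eqref{secondset} with $r_i = r$ and $r_k = 0$ for $k \neq i$ (using $P_{k,0}=1$). A key observation is that whichever alternative makes $P_{i,1}$ vanish also makes every $P_{i,r}$ with $r \geq 1$ vanish: in (a) this forces $\lambda(h_i) = 0$, and in (b) this forces $\boa_i^\vee(\alpha_0) > \lambda(h_0)$, both of which persist for all $r \geq 1$. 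Hence $\tilde{\ba}_\lambda = \bc$ if and only if $P_{i,1} = 0$ in $\tilde{\ba}_\lambda$ for every $i \in I$.

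A direct case analysis on $i$ then matches these vanishing conditions exactly to (i) and (ii). For $i = j$ only (b) is available (since $j \notin I(j)$), giving $\boa_j^\vee(\alpha_0) > \lambda(h_0)$, which is the $i=j$ clause of (ii). For $i \in I(j)$, $P_{i,1} = 0$ requires $\lambda(h_i) = 0$ or $\boa_i^\vee(\alpha_0) > \lambda(h_0)$; contrapositively, if $\lambda(h_i) > 0$ then $\boa_i^\vee(\alpha_0) > \lambda(h_0) \geq 0$, which encodes both (i) and the remaining clause of (ii). I do not anticipate any serious obstacle here, since \thmref{alambda} already supplies the hard input; the one point that needs care is justifying that a single variable lies in a monomial ideal if and only if it is itself one of the listed generators, which uses the graded polynomial structure together with the fact that the defining ideal is proper (so $1$ is not among the generators).
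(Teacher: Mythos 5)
Your proof is correct and follows essentially the same route as the paper: both arguments rest entirely on the presentation in Theorem~\ref{alambda} and come down to deciding, via \eqref{firstset} and \eqref{secondset}, exactly which variables $P_{i,1}$ vanish in $\ba_\lambda/{\rm{Jac}}(\ba_\lambda)$, with the resulting case analysis matching conditions (i) and (ii). The only organizational difference is that where you obtain the infinite--dimensional alternative from the abstract fact that a reduced $\bz_+$--graded algebra with degree--zero piece $\bc$ is either $\bc$ or infinite--dimensional, the paper instead exhibits the linearly independent powers $\{P_{i,1}^r : r\in\bn\}$ of a surviving variable directly via Lemma~\ref{poly}.
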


\begin{pf}  Suppose that $\lambda$ satisfies the conditions in (i) and (ii). To prove that $\dim \ba_\lambda/\rm{Jac}(\ba_\lambda)=1$   it suffices to prove that the elements $P_{i,s}\in\rm{Jac}(\ba_\lambda)$ for all $i\in I$ and $s\ge 1$.   Assume first that $i\ne j$. If $ \lambda(h_i)=0$ then  equation \eqref{pgamma} gives $P_{i,s}w_\lambda =0$ for all $s\ge 1$. If $\lambda(h_i)>0$ then the conditions imply that  $\lambda(h_0)<\boa_i^\vee(\alpha_0)$ and hence equation \eqref{secondset} shows that $P_{i,s}\in \rm{Jac}(\ba_\lambda)$ for all $s\ge 1$. If $i=j$ then again the result follows from \eqref{secondset} and condition (ii).

We now prove the  converse direction.  Suppose that (i) does not hold. Then, there exists $i\ne j$ with $\boa_i(\alpha_0)=0$ and $\lambda(h_i)>0$.  Equation \eqref{secondset}   implies that  the preimage of $\rm{Jac}(\ba_\lambda)$  is contained in the  ideal of $\bu(\lie h[t]^\tau)$   generated by the elements $\{P_{i,s}: i\in I, \boa^\vee _i(\alpha_0)>0\}$. 
Hence, using  Lemma \ref{poly}  we see that the  image of the elements $\{P_{i,1}^r:r\in\bn\}$ in $\ba_\lambda/\rm{Jac}(\ba_\lambda)$ must remain linearly independent showing  that the algebra is infinite--dimensional.

Suppose that (ii) does not hold.  Then either $\lambda(h_0)\geq \boa_j^\vee(\alpha_0)$ or $\lambda(h_0)\ge \boa_i^\vee(\alpha_0)$ for some $i\in I(j)$ with $\lambda(h_i)> 0$. In either case   \eqref{secondset} and Lemma \ref{poly} show that the  image of the elements $\{P_{i,1}^r:r\in\bn\}$ in $\ba_\lambda/\rm{Jac}(\ba_\lambda)$ must remain linearly independent showing  that the algebra is infinite--dimensional.\end{pf}
\begin{cor} The algebra  $\ba_\lambda$ is  finite--dimensional  iff it is a local ring. It follows that $W(\lambda)$ is finite--dimensional iff $\ba_\lambda$ is a local ring.
\end{cor}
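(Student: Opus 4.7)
The plan is to deduce the corollary from \propref{inforone} together with standard commutative algebra facts, using the finite generation of $\ba_\lambda$ proved in \propref{fingen}. Since $\ba_\lambda$ is a finitely generated commutative $\bc$--algebra, we will freely use that $\rm{Jac}(\ba_\lambda)$ coincides with the nilradical, that $\ba_\lambda$ is Noetherian, and (Zariski's lemma/Nullstellensatz) that any reduced finitely generated commutative $\bc$--algebra which is a local ring must be isomorphic to $\bc$.

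The first step is to establish the equivalence: $\ba_\lambda$ is local iff $\ba_\lambda/\rm{Jac}(\ba_\lambda)\cong \bc$. Indeed, maximal ideals of $\ba_\lambda$ correspond bijectively to those of $\ba_\lambda/\rm{Jac}(\ba_\lambda)$, so $\ba_\lambda$ is local iff the reduced quotient $\ba_\lambda/\rm{Jac}(\ba_\lambda)$ is local, and then Zariski's lemma forces this quotient to be $\bc$.

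Now I would prove the two directions of the main statement. For the forward direction, if $\ba_\lambda$ is finite--dimensional, then so is $\ba_\lambda/\rm{Jac}(\ba_\lambda)$; by the dichotomy in \propref{inforone}, this quotient must therefore be isomorphic to $\bc$, hence $\ba_\lambda$ is local by the previous paragraph. For the converse, if $\ba_\lambda$ is local then $\ba_\lambda/\rm{Jac}(\ba_\lambda)\cong \bc$; since $\ba_\lambda$ is Noetherian and $\rm{Jac}(\ba_\lambda)$ is a finitely generated nilideal, there exists $N$ with $\rm{Jac}(\ba_\lambda)^N=0$, and each successive quotient $\rm{Jac}(\ba_\lambda)^k/\rm{Jac}(\ba_\lambda)^{k+1}$ is a finitely generated module over $\bc$, hence finite--dimensional. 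Finiteness of $\ba_\lambda$ then follows from the finite filtration.

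For the second assertion, I would invoke \propref{fingen}: since $W(\lambda)$ is a finitely generated $\ba_\lambda$--module, finite--dimensionality of $\ba_\lambda$ forces that of $W(\lambda)$. Conversely, $W(\lambda)_\lambda \cong \ba_\lambda$ as right $\ba_\lambda$--modules, so if $W(\lambda)$ is finite--dimensional then so is $\ba_\lambda$. Combined with the first part of the corollary, this yields $W(\lambda)$ finite--dimensional iff $\ba_\lambda$ is local. No genuine obstacle is expected here, as all the substantive work has been done in \thmref{alambda} and \propref{inforone}; the corollary is essentially a bookkeeping consequence of the dichotomy together with standard Noetherian local ring arguments.
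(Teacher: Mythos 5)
Your proof is correct, and the forward direction and the final statement about $W(\lambda)$ match the paper exactly (finite--dimensionality of $\ba_\lambda/{\rm{Jac}}(\ba_\lambda)$ plus the dichotomy of the proposition, and Proposition~\ref{fingen} together with $W(\lambda)_\lambda\cong\ba_\lambda$). Where you diverge is the converse direction. The paper goes back to the presentation: it uses the proposition to see that conditions (i) and (ii) hold, and then invokes equations \eqref{pgamma} and \eqref{secondset} to check directly that every generator $P_{i,s}$ is nilpotent in $\ba_\lambda$, concluding that a commutative algebra generated by finitely many nilpotents is finite--dimensional. You instead argue abstractly: locality plus the Nullstellensatz forces $\ba_\lambda/{\rm{Jac}}(\ba_\lambda)\cong\bc$, Noetherianity makes ${\rm{Jac}}(\ba_\lambda)$ a nilpotent ideal, and the finite filtration by powers of the radical with finite--dimensional quotients gives finite--dimensionality. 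Your route is more general (it shows any finitely generated commutative local $\bc$--algebra is finite--dimensional, with no reference to the specific relations) and arguably cleaner, while the paper's route is more self--contained in that it only re-uses facts already established about the $P_{i,s}$. Both are valid; all the hypotheses you invoke (finite generation from Proposition~\ref{fingen}, Jacobson radical equals nilradical) are available in the paper.
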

\begin{pf}
If  $\ba_\lambda$ is finite--dimensional then so is $\ba_\lambda/\rm{Jac}(\ba_\lambda)$ and the corollary is immediate from the proposition. Conversely suppose that  $\ba_\lambda$ is a local ring. By the proposition and equation \eqref{pgamma}, we have  $$P_{i,s}w_\lambda=0,\ \ {\rm{if}}\ \ \boa_i^\vee(\alpha_0)=0, \ \ s\in\bn.$$  If $\boa_i^\vee(\alpha_0)\ne 0$ we still have from \eqref{pgamma} that $P_{i,s}w_\lambda=0$ if $s$ is sufficiently large. Otherwise,  equation \eqref{secondset} shows that   there exists $N\in\bz_+$ such that $$P_{i,s}^N w_\lambda =0,\ \  {\rm{for\ all}}\ \ \ i\in I,  \  \ s\in\bn.$$  This proves that $\ba_\lambda$  is   generated by finitely many nilpotent elements and since it is a commutative algebra it is finite--dimensional.  The second statement of the corollary is now immediate from Proposition \ref{fingen}.
\end{pf}

\subsection{}  We turn to the proof of Theorem \ref{alambda}. It follows from equation \eqref{pgamma} that the elements in \eqref{firstset} map to zero in $\ba_\lambda$. Until further notice, we shall prove results which are needed to show that  the elements in \eqref{secondset} are in $\rm{Jac}(\ba_\lambda)$.

 Given $\alpha,\beta\in R$,  with $\ell\alpha+\beta\in R$, let  $c(\ell,\alpha,\beta) \in \mathbb{Z}\backslash\{0\}$ be such that $$ \text{ad}_{x_{\alpha}}^\ell(x_{\beta})  = c(\ell, \alpha, \beta) x_{\ell \alpha + \beta}.$$ The following is trivially checked by induction.

\begin{lem}\label{relat1} Let $\gamma\in\Delta$ and $\beta\in R^+ \setminus \Delta$ be such that $\beta+\gamma\notin R$  and  $(\beta,\gamma)>0.$  Given $m,n,s,p,q \in\bz_+$ we have 
$$(x^+_{\gamma}\otimes t^{p})^{(s+d_\gamma q)}(x^+_{\beta-\gamma}\otimes t^{m})^{(s)} (x^-_\beta\otimes t^{n})^{(q+s)}=C (x^-_{s_\gamma(\beta)}\otimes t^{n+d_\gamma p})^{(q)}(x^+_{\gamma}\otimes t^{p})^{(s)}  (x^-_{\gamma}\otimes t^{m+n})^{(s)} +X$$ where $X\in \bu(\lie g[t]^{\tau}) \lie n^+[t]^{\tau}$  and $ {(d_{\gamma}!)^q}C =c(d_\gamma,\gamma,-\beta)^q c(1,\beta-\gamma,-\beta)^s .$
\hfill\qedsymbol \end{lem}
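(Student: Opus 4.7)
The plan is to proceed by induction on $q \ge 0$, working always modulo the right ideal $\bu(\lie g[t]^\tau)\,\lie n^+[t]^\tau$ (into which the error term $X$ is absorbed). The proof rests on four commutator facts in $\bu(\lie g[t])$:
(a) $[x^+_\gamma\otimes t^p,\, x^+_{\beta-\gamma}\otimes t^m]$ is proportional to $x^+_\beta\otimes t^{p+m}\in\lie n^+[t]$, hence absorbable into $X$;
(b) $[x^+_{\beta-\gamma}\otimes t^m,\, x^-_\beta\otimes t^n] = c(1,\beta-\gamma,-\beta)\,(x^-_\gamma\otimes t^{m+n})$;
(c) $[x^-_\gamma\otimes t^a,\, x^-_\beta\otimes t^b] = 0$ because $\gamma+\beta\notin R$, which lets the $x^-_\gamma$'s produced in (b) drift freely past any remaining $x^-_\beta$'s;
(d) the $\gamma$-string through $\beta$ terminates at $s_\gamma\beta$, so $\ad(x^+_\gamma)^{d_\gamma}(x^-_\beta) = c(d_\gamma,\gamma,-\beta)\,x^-_{s_\gamma\beta}$ and $\ad(x^+_\gamma)^{d_\gamma+1}(x^-_\beta) = 0$. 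An immediate consequence is that $[x^+_\gamma,\, x^-_{s_\gamma\beta}] = 0$ (the putative weight $\gamma + s_\gamma\beta$ sits beyond the top of the $\gamma$-string through $s_\gamma\beta$), so $x^+_\gamma$ commutes past $x^-_{s_\gamma\beta}$.

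The base case $q = 0$ asks that
\[
(x^+_\gamma\otimes t^p)^{(s)}(x^+_{\beta-\gamma}\otimes t^m)^{(s)}(x^-_\beta\otimes t^n)^{(s)} \equiv c(1,\beta-\gamma,-\beta)^s\,(x^+_\gamma\otimes t^p)^{(s)}(x^-_\gamma\otimes t^{m+n})^{(s)}
\]
modulo $X$. Moving the $s$ copies of $x^+_{\beta-\gamma}\otimes t^m$ rightward through the $s$ copies of $x^-_\beta\otimes t^n$ one at a time, each pairwise contraction using (b) and the resulting $x^-_\gamma$'s drifting freely past remaining negatives by (c), the only surviving properly ordered term is the one in which all $s$ contractions take place; combinatorial bookkeeping with the factorials implicit in divided powers yields the stated scalar $c(1,\beta-\gamma,-\beta)^s$ cleanly, and every configuration with a surviving $x^+_{\beta-\gamma}$ (or a higher-weight $x^+$ generated by a secondary commutator with the pre-existing $(x^+_\gamma\otimes t^p)^{(s)}$) is absorbed into $X$.

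For the inductive step I split $(x^+_\gamma\otimes t^p)^{(s+d_\gamma(q+1))} = \binom{s+d_\gamma(q+1)}{d_\gamma}^{-1}(x^+_\gamma\otimes t^p)^{(d_\gamma)}(x^+_\gamma\otimes t^p)^{(s+d_\gamma q)}$ and similarly peel off one copy of $(x^-_\beta\otimes t^n)$ from the right, so that the LHS at level $q+1$ becomes a multiple of $(x^+_\gamma\otimes t^p)^{(d_\gamma)}\cdot\text{LHS}_q\cdot(x^-_\beta\otimes t^n)$. Substituting the inductive hypothesis and using (c), (d), and the observation that $(x^+_\gamma\otimes t^p)^{(d_\gamma)}$ commutes with both $(x^-_{s_\gamma\beta}\otimes t^{n+d_\gamma p})^{(q)}$ and $(x^-_\gamma\otimes t^{m+n})^{(s)}$ by $\gamma$-string termination, the problem reduces to computing $(x^+_\gamma\otimes t^p)^{(s+d_\gamma)}(x^-_\beta\otimes t^n)$ modulo $X$ and isolating the component with exactly $d_\gamma$ commutators performed. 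By the standard divided-power identity $x^{(n)}y = \sum_{k} \tfrac{1}{k!}\ad(x)^k(y)\,x^{(n-k)}$ together with (d), this picks out the contribution $\tfrac{c(d_\gamma,\gamma,-\beta)}{d_\gamma!}(x^-_{s_\gamma\beta}\otimes t^{n+d_\gamma p})(x^+_\gamma\otimes t^p)^{(s)}$, while intermediate iterates $\ad(x^+_\gamma)^k(x^-_\beta)$ with $0 < k < d_\gamma$ leave surviving positive operators and are swept into $X$.

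The main obstacle is the combinatorial bookkeeping of constants: one must verify that the multinomial factors produced by the divided-power product rules and the Chevalley scalars from the iterated adjoint actions assemble so as to give exactly $(d_\gamma!)^q C = c(d_\gamma,\gamma,-\beta)^q\, c(1,\beta-\gamma,-\beta)^s$ at every level. Once (a)--(d) are confirmed, this reduces to a formal if somewhat tedious verification, which is what the author means by ``trivially checked by induction.''
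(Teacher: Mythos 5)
Your four commutator facts (a)--(d) are essentially correct (with the caveat that the $\gamma$-string through $\beta$ having length exactly $d_\gamma$ uses that $\beta$ is long, which holds in all the paper's applications), and the base case $q=0$ is sound. But the inductive step contains a genuine error: you substitute the congruence $\mathrm{LHS}_q\equiv \mathrm{RHS}_q \pmod{\bu(\lie g[t]^\tau)\lie n^+[t]^\tau}$ into the product $(x^+_\gamma\otimes t^p)^{(d_\gamma)}\cdot\mathrm{LHS}_q\cdot(x^-_\beta\otimes t^n)$ and discard the contribution of the error term. The set $\bu(\lie g[t]^\tau)\lie n^+[t]^\tau$ is a \emph{left} ideal, not a right ideal, so congruence modulo it is not preserved under right multiplication by $x^-_\beta\otimes t^n$; the discarded piece $(x^+_\gamma)^{(d_\gamma)}X_q(x^-_\beta\otimes t^n)$ in fact contributes a nonzero multiple of the leading term. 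A minimal check: take $\lie g=\lie{sl}_3$, $\gamma=\alpha_1$, $\beta=\theta$, $s=0$, all $t$-exponents zero, and write $e=x^+_{\alpha_1}$, $f=x^-_\theta$, $g=x^-_{\alpha_2}$, $[e,f]=cg$. Passing from $q=1$ to $q=2$, your step computes $e^{(2)}f^{(2)}=\tfrac14\,e(ef)f$, substitutes $ef\equiv cg$ and keeps only $\tfrac{c}{4}egf\equiv \tfrac{c^2}{2}g^{(2)}$, whereas the discarded term $\tfrac14\,e(fe)f=\tfrac14\,efef\equiv \tfrac{c^2}{2}g^{(2)}$ as well; the correct total is $c^2g^{(2)}$, as a direct expansion of $e^2f^2=f^2e^2+4cfge+2c^2g^2$ confirms, so your step would output half the correct constant at each stage. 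To repair this you must either strengthen the inductive hypothesis so as to control $X_q$ explicitly (e.g.\ record which positive root vectors it ends in and how they commute with $x^-_\beta$), or reorganize the induction so that established congruences are only ever multiplied on the left.

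Two smaller points. First, your claim that $(x^+_\gamma\otimes t^p)^{(d_\gamma)}$ commutes with $(x^-_\gamma\otimes t^{m+n})^{(s)}$ is false ($[x^+_\gamma,x^-_\gamma]=h_\gamma\neq 0$); fortunately the step you use it for only requires commuting past $(x^-_{s_\gamma(\beta)}\otimes t^{n+d_\gamma p})^{(q)}$, which is legitimate since $\gamma-s_\gamma(\beta)=(d_\gamma+1)\gamma-\beta\notin R$. Second, the paper offers no proof beyond ``trivially checked by induction,'' so there is no route to compare against; but as written your induction does not establish the stated constant.
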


It is immediate that under the hypothesis of the Lemma we have for all $P\in\bu(\lie h[t]^\tau)$ that  \begin{gather} \label{step}(x^+_{\gamma}\otimes t^{p})^{(s+ d_\gamma q)}(x^+_{\beta-\gamma}\otimes t^{m})^{(s)} (x^-_\beta\otimes t^{n})^{(q+s)} P w_\lambda\\ \notag=   C(x^-_{s_\gamma(\beta)}\otimes t^{n+ d_\gamma p})^{(q)}(x^+_{\gamma}\otimes t^{p})^{(s)}(x^-_{\gamma}\otimes t^{m+n})^{(s)}P w_\lambda, \end{gather} for some $C\ne 0$.

\subsection{}\label{betas}
Recall  that given any root $\beta\in R^+$ we can choose $\alpha\in\Delta$ with $(\beta,\alpha)>0$. Moreover if $\beta\notin\Delta$ and $\beta$ is long then $\beta+\alpha\notin R$. Setting  $\alpha_{i_0}=\alpha_j,\ \ \beta_0=\alpha_0,$  we set $\beta_1=s_{i_0}\beta_0$ and note that $\beta_1\in R^+$. If $\beta_1\notin\Delta$ then we choose $\alpha_{i_1}\in\Delta$ with $(\beta_1,\alpha_{i_1})>0$ and set $\beta_2=s_{i_1}\beta_1$. Repeating this if neccessary we reach a stage when $k\ge 1$ and $\beta_k\in\Delta$. In this case we set $\alpha_{i_k}=\beta_k$.
 We claim that  
\begin{equation}\label{anzahl}|\{0\leq r \leq k : i_r=i\}|=\boa_i^{\vee}(\alpha_0),\ \ 1\leq i\leq n.\end{equation}
 To see this,  notice that since the $\beta_p$ are long roots, we have $h_{\beta_p}=h_{\beta_{p-1}}-h_{i_{p-1}}$. Hence, $$h_0=\sum_{s=0}^k h_{i_s}=\sum_{i=1}^n \boa^\vee_i(\alpha_0)h_i.$$ Equating coefficients gives \eqref{anzahl}.

\subsection{} Retain the notation of Section \ref{betas}. We now prove that 
\begin{equation}\label{first1}P_{i_{k},s_{k}}\cdots P_{i_{0},s_0}w_\lambda=0,\ \ \text{if } \ (s_0+\cdots +s_k)\ge\lambda(h_0)+1. \end{equation}
We begin with the equality $$ w= (x^-_{0}\otimes 1)^{(s_0+\cdots+s_k)}w_\lambda=0,\ \ \    (s_0+\cdots +s_k)\ge \lambda(h_0) + 1,$$ which is  a defining relation for $W(\lambda)$.  
Recalling that $j=i_0$ and setting  $$X_1=(x_j^+\otimes t)^{(s_0 +d_{\alpha_j}(s_1+\cdots +s_k))}(x^+_{\alpha_0-\alpha_j}\otimes t^{a_j-1})^{(s_0)}$$ we get by applying \eqref{step} 
$$0=X_1w= (x^-_{\beta_1}\otimes t^{d_{\alpha_j}}  )^{(s_1+\cdots +s_k)}P_{i_0,s_0}w_{\lambda}.$$ More generally, if we set $$X_{r+1}=(x^+_{\alpha_{i_{r}}}\otimes t^{\delta_{i_r, j}})^{(s_{r}+d_{\alpha_{i_r}}(s_{r+1}+\cdots+r_{k}))}(x^+_{\beta_{r}-\alpha_{i_{r}}}\otimes t^{m_{r}})^{(s_{r})},$$
where $m_r = a_j-\delta_{i_r,j}- d_{\alpha_j}|\{0 \leq q <r \ | \ i_q=j\}|$ we find after repeatedly applying \eqref{step} that
$$0=(x^+_{\beta_{k}}\otimes t^{\delta_{i_k,j}})^{(s_k)}X_k\cdots X_1w=P_{i_{k},s_{k}}\cdots P_{i_{0},s_0}w_\lambda=0.$$ This  proves the assertion.

\subsection{} We can now prove that  $$P_{1, r_1}\cdots P_{n,r_n}\in {\rm Jac}(\ba_\lambda)\ \ {\rm{if}}\ \  \sum_{i=1}^n\boa_i^\vee(\alpha_0) r_i>\lambda(h_0).$$  Taking $s_p=r_m$ whenever $i_p=m$ in \eqref{first1} and using \eqref{anzahl} we see that
\begin{equation}\label{first2}P_{1, r_1}^{\boa_1^\vee(\alpha_0)}\cdots P_{n,r_n}^{\boa_n^\vee(\alpha_0)}w_\lambda=0\ \    {\rm{if}}\ \  \sum_{i=1}^n\boa_i^\vee(\alpha_0) r_i>\lambda(h_0).\end{equation} Multiplying through by appropriate powers of $P_{i,r_i}$, $1\le i\le n$ we get that for some $s\ge 0$ we have  $$P_{1,r_1}^s\cdots P_{n,r_n}^sw_\lambda =0,\ \    {\rm{if}}\ \  \sum_{i=1}^n\boa_i^\vee(\alpha_0) r_i>\lambda(h_0).$$ Hence  $P_{1,r_1}^s\cdots P_{n,r_n}^s= 0$ in $\ba_\lambda$ proving that $P_{1, r_1}\cdots P_{n,r_n}\in {\rm Jac}(\ba_\lambda)$. This argument proves that there exists a well--defined morphism of algebras
\begin{equation}\label{surjectivemapa}\varphi:  \tilde\ba_\lambda \twoheadrightarrow \ba_\lambda/\text{Jac}(\ba_\lambda). \end{equation} 

We now prove,
\begin{lem}\label{factordi}
If  $\boa_j^{\vee}(\alpha_0)=1$ the map $\varphi$ factors through $\ba_\lambda$, i.e., we have a commutative diagram
$$
\begin{tikzpicture}[column sep=small]
\node (A) at (-1,0) {$\tilde\ba_\lambda$};
\node (B) at (1.7,0) {$\ba_\lambda/{\rm{Jac}}(\ba_\lambda)$};
\node (C) at (1.7,-1.7) {$\ba_{\lambda}$};
\draw
(A) edge[->>,>=angle 90] node[left] {} (B)
(A) edge[->>,>=angle 90] node[left] {} (C)
(C) edge[->>,>=angle 90] node[left] {} (B);
\end{tikzpicture}
$$
\end{lem}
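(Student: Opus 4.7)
The approach is to show that every defining relation of $\tilde\ba_\lambda$ already holds in $\ba_\lambda$. Since $\ba_\lambda=\bu(\lie h[t]^\tau)/\ann_{\bu(\lie h[t]^\tau)}(w_\lambda)$, this amounts to verifying that each generator of the defining ideal of $\tilde\ba_\lambda$ (viewed inside $\bu(\lie h[t]^\tau)$) annihilates $w_\lambda$ in $W(\lambda)$, which will immediately produce the surjection $\tilde\ba_\lambda\twoheadrightarrow\ba_\lambda$ through which $\varphi$ factors.

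The generators in \eqref{firstset} are straightforward: for $i\in I(j)$ and $s\ge\lambda(h_i)+1$, the vanishing $P_{i,s}w_\lambda=0$ is immediate from \eqref{pgamma} applied to the root $\alpha_i\in R_0^+$. The real content is to verify the generators in \eqref{secondset}: for each tuple $(r_1,\dots,r_n)$ with $\sum_i\boa_i^\vee(\alpha_0)r_i>\lambda(h_0)$, one must show $P_{1,r_1}\cdots P_{n,r_n}w_\lambda=0$. Writing $c_i:=\boa_i^\vee(\alpha_0)$, the relation \eqref{first2} already derived gives only the weaker statement $P_{1,r_1}^{c_1}\cdots P_{n,r_n}^{c_n}w_\lambda=0$, so additional input is needed to strip off the exponents.

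That additional input is the structural observation that the hypothesis $\boa_j^\vee(\alpha_0)=1$---which is equivalent to $a_j=d_{\alpha_j}$---already forces $c_i\in\{0,1\}$ for every $i\in I$. Indeed, combined with $a_j\ge 2$, the equality $a_j=d_{\alpha_j}$ restricts $(\lie g,j)$ to one of $(B_n,n)$, $(C_n,k)$ for $2\le k\le n-1$, or $(F_4,4)$. In each case the explicit computation of $\alpha_0=w_\circ^{-1}\theta$ (carried out for $(B_n,n)$ in the example of Section~\ref{section1}, and done analogously in the remaining cases via the explicit descriptions of $w_\circ$ and $\theta$) yields $\boa_i(\alpha_0)\le d_{\alpha_i}$ for all $i$, whence $c_i\le 1$. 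Granting this, \eqref{anzahl} implies that the sequence $i_0,i_1,\dots,i_k$ of Section~\ref{betas} consists of pairwise distinct indices, so the substitution $s_p=r_{i_p}$ in \eqref{first1} produces exactly the monomial $\prod_i P_{i,r_i}=P_{1,r_1}\cdots P_{n,r_n}$ (using $P_{i,0}=1$ for those $i$ with $c_i=0$), while the sum $\sum_p s_p$ equals $\sum_i c_i r_i$. The hypothesis $\sum_i c_i r_i>\lambda(h_0)$ then delivers $P_{1,r_1}\cdots P_{n,r_n}w_\lambda=0$, as required.

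The only nontrivial step is the structural observation $c_j=1\Rightarrow c_i\le 1$ for all $i$; I would verify it by the case-by-case calculation outlined above. Once that is in hand, the remainder of the argument is a direct one-line application of \eqref{first1}, giving the factorization.
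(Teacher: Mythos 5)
Your overall strategy coincides with the paper's: both reduce the lemma to the single arithmetic fact that $\boa_j^\vee(\alpha_0)=1$ forces $\boa_i^\vee(\alpha_0)\le 1$ for every $i\in I$, after which \eqref{first1}/\eqref{first2} immediately give the required factorization. Where you differ is in \emph{proving} that arithmetic fact: the paper argues structurally, observing that $s_{\alpha_0}\alpha_j=\alpha_j-\alpha_0$ is a short root and computing $h_{\alpha_0-\alpha_j}=d_jh_0-h_j$, which forces $\boa_i^\vee(\alpha_0)\le 1$ except possibly in type $F_4$, handled by inspection; you instead attempt a complete classification of the pairs $(\lie g,j)$ with $a_j\ge 2$ and $a_j=d_{\alpha_j}$ and verify $\boa_i^\vee(\alpha_0)\le 1$ in each. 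The gap is that your classification is incomplete. You list $(B_n,n)$, $(C_n,k)$ with $2\le k\le n-1$, and $(F_4,4)$, but $(G_2,j)$ with $j$ the short simple node also qualifies: there $a_j=3=d_{\alpha_j}$, hence $\boa_j^\vee(\alpha_0)=1$. (In that case $\alpha_0=w_\circ^{-1}\theta$ has $\boa_i^\vee(\alpha_0)=1$ for the long node as well, so the conclusion still holds, but the case must be included.) You also omit $(C_n,1)$, where $a_1=2=d_{\alpha_1}$ and $\alpha_0=\theta$, again consistent with the conclusion but missing from your list. Since your argument is explicitly case-by-case, these omissions leave the classification---and hence the proof---incomplete as written, even though the underlying claim $\boa_i^\vee(\alpha_0)\le 1$ does hold in every missed case.
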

\begin{proof} Using \eqref{first2} it suffices to prove that if 
 $\boa_j^{\vee}(\alpha_0)=1$ then
\begin{equation*}\boa_i^{\vee} (\alpha_0) \leq 1 \ \ \forall i \in I.\end{equation*}
Since $\boa_j(\alpha_0)=a_j\geq 2>\boa_j^{\vee}(\alpha_0)=1$ we see that $\lie g$ cannot be of simply laced type and hence $\alpha_j$ is short. It follows that $s_{\alpha_0}\alpha_j=\alpha_j-\alpha_0$ is also short and so $h_{\alpha_0 - \alpha_j} =d_jh_0- h_j$. If $\boa_i^{\vee}(\alpha_0) > 1$ for some $i \not= j,$  then we would have 
\begin{equation*} \boa_{i}^{\vee}(\alpha_0 - \alpha_j) = d_j \boa_i^{\vee}(\alpha_0) \geq 2d_j.\end{equation*} Since $\alpha_j$ is short this is impossible unless $\lie g$ is of type $F_4$ and $j=4$. This case can be handled by an inspection.

\end{proof}

\subsection{} Using Lemma \ref{factordi} and \eqref{surjectivemapa} we see that the proof of Theorem \ref{alambda} is complete if we show that the map \eqref{surjectivemapa} is injective. Since $\tilde\ba_\lambda$
is a quotient of $\bu(\lie h[t]^{\tau})$ by a square--free ideal, it has no
nilpotent elements and thus $\text{Jac}(\tilde \ba_\lambda) = 0.$ So if $f$ is a nonzero element in $\tilde\ba_\lambda$, there exists a maximal ideal $\tilde{\bi}_{f}$ of $\tilde\ba_\lambda$ so that $f \notin  \tilde{\bi}_{f}.$ Therefore, by Lemma \ref{poly} we can choose a tuple $(\pi_{i,r})$, $ i\in I$, $r\in \bn$ satisfying the relations \eqref{firstset} and \eqref{secondset} such that under the evaluation map sending $P_{i,r}$ to $\pi_{i,r}$ the element $f$ is mapped to a non--zero scalar. Define $z_1,\dots,z_k$ and $\lambda_1,\dots,\lambda_k\in P^+$ by 
$$\pi_i(u)=1+\sum^{}_{r\in\bn}\pi_{i,r}u^r=\prod_{s=1}^k(1-z_s^{a_j}u)^{\lambda_s(h_i)},\ \ i\in I$$
and set $\mu=\lambda-(\lambda_1+\dots+\lambda_k)\in P_0$. In what follows we show that $\mu\in P_0^+$. Since $(\pi_{i,r})$ satisfies the relations in \eqref{firstset} we have that $\mu(h_i)\in \bz_+$ for $i\in I(j)$. Moreover,  since $(\pi_{i,r})$ satisfies  \eqref{secondset} we get $\mu(h_0)\in \bz_+.$
To see this, note that the coefficient of $u^r$ in $\prod_{i\in I} \pi_i(u)^{\boa_i^{\vee}(\alpha_0)}$ is given by
\begin{equation}\label{cofur}\sum_{(r_{i_k})}\ \prod_{i\in I} \prod_{k=1}^{\boa^{\vee}_i(\alpha_0)} \pi_{i,r_{i_k}},\end{equation}
where the sum runs over all tuples $(r_{i_k})$ such that 
$\sum_{i\in I} \sum_{k=1}^{\boa^{\vee}_i(\alpha_0)}r_{i_k}=r.$ Set $r_i=\text{max}\{r_{i_k},\ 1\leq k\leq \boa^{\vee}_i(\alpha_0)\}$, $ i\in I$ and observe that if $r>\lambda(h_{0})$, then
$$\sum_{i\in I}\boa_i^{\vee}(\alpha_0)r_i\geq r>\lambda(h_{0})$$
and hence \eqref{cofur} vanishes. It follows that
$$\mu(h_0)=\lambda(h_0)-\text{deg}(\prod_{i\in I} \pi_i(u)^{\boa_i^{\vee}(\alpha_0)})\in \bz_+.$$

Now using Proposition \ref{irr1} we have a quotient of $W(\lambda)$ where $f$ acts by a non--zero scalar on the highest weight vector. Hence $f^N\notin \text{Ann}_{\lie h[t]^{\tau}}(w_{\lambda})$ for all $N\geq 1$, i.e.  the image of $f$ under the map \eqref{surjectivemapa} is non--zero. This proves the map
\eqref{surjectivemapa} is injective, and so Theorem \ref{alambda} is established.


\section{Finite--dimensional global Weyl modules}\label{section5}
In this section we  give necessary and sufficient conditions for the global Weyl module to be finite--dimensional.

\subsection{}  Recall the elements $\theta_k\in R^+$, $0\le k< a_j$  defined in Section \ref{delta0}.
\begin{thm}\label{criterion} Given  $\lambda\in P_0^+$,
 the module $W(\lambda)$  is an irreducible $\lie g[t]^\tau$--module and hence isomorphic to $\ev^*_0 V_{\lie g_0}(\lambda)$  iff  the following hold:\begin{equation}\label{condir1}\lambda(h_0)=0\  {\rm{and}}\ \ \lambda(h_i)>0\ {\rm{only\  if}}\ \ \boa_i(\theta_{a_j-1})=\boa_i(\alpha_0).\end{equation}
 \end{thm}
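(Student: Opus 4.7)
The proof has two directions. For sufficiency, assume both conditions on $\lambda$. I plan to establish $\lie g[t]^\tau[s]\cdot w_\lambda = 0$ for every $s\geq 1$; combined with the defining relations of $W(\lambda)$ this forces $W(\lambda) = \bu(\lie g_0) w_\lambda$ to satisfy precisely the defining relations of $V_{\lie g_0}(\lambda)$ acted on via $\ev_0$, which gives the isomorphism.

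The main step is to show $(x^-_{\theta_{a_j-1}}\otimes t)\, w_\lambda = 0$. From $\lambda(h_0)=0$ and the defining relation $(x^-_0)^{\lambda(h_0)+1}w_\lambda = 0$ we have $x^-_0\, w_\lambda = 0$. Writing $\alpha_0 - \theta_{a_j-1} = \sum_i c_i\alpha_i$ with $c_i = \boa_i(\alpha_0) - \boa_i(\theta_{a_j-1})$ (so $c_j = 1$), I plan to realize $x^-_{\theta_{a_j-1}}\otimes t$ as a nonzero scalar multiple of an iterated commutator $[O_m, [O_{m-1}, \dots, [O_1, x^-_0\otimes 1]\cdots]]$ in $\lie g[t]^\tau$, using $O_k = x^+_i\otimes 1$ with $i\in I(j)$ and $c_i>0$ a total of $c_i$ times, $O_k = x^-_i\otimes 1$ with $i\in I(j)$ and $c_i<0$ a total of $|c_i|$ times, and $O_k = x^+_j\otimes t$ exactly once (absorbing the required grade increase), with an ordering chosen so that every intermediate weight is a root (as is directly verifiable in the $(B_n,D_n)$ example). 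The key observation is that every $O_k$ annihilates $w_\lambda$: raising operators (including $x^+_j\otimes t$) lie in $\lie n^+[t]^\tau$; and for $O_k = x^-_i\otimes 1$ with $i\in I(j)$, the fact that $c_i\neq 0$ together with the contrapositive of condition~(b) and $\lambda\in P_0^+$ forces $\lambda(h_i)=0$, hence $x^-_i\, w_\lambda = 0$ by the defining relation. Since $[O,X]\, w_\lambda = O(Xw_\lambda) - X(Ow_\lambda) = 0$ whenever both $O$ and $X$ annihilate $w_\lambda$, induction on the depth of the iterated commutator gives $(x^-_{\theta_{a_j-1}}\otimes t)w_\lambda = 0$. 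Once this vanishing is known, applying $\ad(x^+_i)$ for $i\in I(j)\cup\{0\}$ and using the irreducibility of $\lie g_1$ as a $\lie g_0$-module (\propref{facts}(ii), with $x^-_{\theta_{a_j-1}}$ as lowest weight vector) extends the vanishing to all of $\lie g_1\otimes t$; the bracket relations $\lie g_k = [\lie g_{k-1}, \lie g_1]$ and $\lie g_0 = [\lie g_1, \lie g_{a_j-1}]$ from \propref{facts}(iii) then propagate by induction on the grade to every $\lie g[t]^\tau[s]$, $s\geq 1$.

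For necessity, $W(\lambda)\cong\ev^*_0 V_{\lie g_0}(\lambda)$ implies $\ba_\lambda\cong\bc$ and $\lie g[t]^\tau[s]\, w_\lambda = 0$ for $s\geq 1$; in particular $(x^-_{\theta_{a_j-1}}\otimes t)w_\lambda = 0$. Combining $\ba_\lambda\cong\bc$ with \thmref{alambda} and \propref{inforone} yields the weaker bounds $\lambda(h_0) < \boa_j^\vee(\alpha_0)$ and $\lambda(h_0) < \boa_i^\vee(\alpha_0)$ whenever $\lambda(h_i)>0$. To sharpen to the theorem's exact conditions, I plan to run the analogue of the argument in the proof of \thmref{alambda} but with $\theta_{a_j-1}$ in place of $\alpha_0$: applying the Garland identities \eqref{gar1}--\eqref{gar2} to the vanishing $(x^-_{\theta_{a_j-1}}\otimes t)^{(r)}w_\lambda = 0$, together with iterated use of \lemref{relat1}, yields monomial relations in $\ba_\lambda$ of the form $\prod_i P_{i,r_i}^{\boa_i^\vee(\theta_{a_j-1})} w_\lambda = 0$ for suitable tuples $(r_i)$. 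Combined with $\ba_\lambda\cong\bc$ and the Stanley--Reisner presentation from \thmref{alambda}, these relations force both $\lambda(h_0) = 0$ and the support condition $\boa_i(\alpha_0) = \boa_i(\theta_{a_j-1})$ for every $i$ with $\lambda(h_i)>0$. The main obstacle is this necessity step, specifically the sharpening of $\lambda(h_0) < \boa_j^\vee(\alpha_0)$ to the equality $\lambda(h_0) = 0$, which requires careful bookkeeping of which monomials in the $P_{i,r}$ are forced to vanish by the new relations coming from $\theta_{a_j-1}$ alongside those coming from $\alpha_0$.
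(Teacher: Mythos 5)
Your \emph{sufficiency} argument is essentially the paper's. The paper also reduces everything to proving $(x^-_{\theta_{a_j-1}}\otimes t^{ra_j+1})w_\lambda=0$ and then propagates the vanishing using the irreducibility of $\lie g_1$ and Proposition \ref{facts}; the only difference is that the paper brackets $x^-_0\otimes 1$ with $x^+_{\alpha_j}\otimes t^{ra_j+1}$ and then with the single root vector $x^-_{\beta}\otimes 1$ for $\beta=\theta_{a_j-1}+\alpha_j-\alpha_0\in R_0^+\cup\{0\}$ (which kills $w_\lambda$ because the hypothesis forces $(\lambda,\beta)=0$), whereas you decompose $\beta$ into simple steps. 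Your assertion that an ordering exists making every intermediate weight a root is true but must be proved for general $\lie g$, not just checked for $(B_n,D_n)$; it follows from the fact that $x_{\alpha_j-\alpha_0}$ and $x^-_{\theta_{a_j-1}}$ both lie in the irreducible $\lie g_0$--module $\lie g_1$, whose lowest weight vector is $x^-_{\theta_{a_j-1}}$, together with $\boa_j(\beta)=0$. You also need the vanishing for all powers $t^{ra_j+1}$, not only $t$; this is a trivial adjustment.

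The \emph{necessity} direction has a genuine gap, which your last sentence flags but which your proposed route cannot close. Everything you plan to extract is of the form ``$Pw_\lambda=0$'' for $P$ of positive degree in $\bu(\lie h[t]^\tau)$; but once $W(\lambda)\cong\ev_0^*V_{\lie g_0}(\lambda)$, every $P_{i,r}$ with $r\ge 1$ already annihilates $w_\lambda$, so the additional relations obtained from \eqref{gar1}, \eqref{gar2} and \eqref{step} applied to $\theta_{a_j-1}$ are automatic and carry no information about $\lambda$. The substantive problem is that the conditions \eqref{condir1} are strictly stronger than the conditions of Proposition \ref{inforone} characterizing $\ba_\lambda/{\rm{Jac}}(\ba_\lambda)\cong\bc$: your derived bound $\lambda(h_0)<\boa_j^\vee(\alpha_0)$ is weaker than $\lambda(h_0)=0$ whenever $\boa_j^\vee(\alpha_0)\ge 2$, e.g.\ whenever $\alpha_j$ is long. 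Concretely, for $D_4$ with $j$ the trivalent node one has $\alpha_0=\theta$, $\boa_j^\vee(\alpha_0)=2$, and $\lambda=\lambda_0$ satisfies Proposition \ref{inforone}, so $\ba_{\lambda_0}$ is local and $W(\lambda_0)$ is finite--dimensional; yet $\lambda_0(h_0)=1$, so the theorem asserts $W(\lambda_0)$ is reducible. Proving reducibility here means proving $W(\lambda)[1]\neq 0$, a \emph{lower} bound on the Weyl module, and no collection of vanishing relations in an algebra that is already $\bc$ can produce it. One must construct a reducible quotient: the paper endows $V_{\lie g_0}(\lambda)\oplus V_{\lie g_0}(\mu)$ with a $\lie g[t]^\tau$--module structure from a nonzero $\Phi\in\Hom_{\lie g_0}(\lie g_1\otimes V_{\lie g_0}(\lambda),V_{\lie g_0}(\mu))$, invokes the PRV theorem (plus a case analysis for the offending nodes $i\in I(j)$) to produce such a $\Phi$ in Proposition \ref{funirr1}, and reduces arbitrary $\lambda$ violating \eqref{condir1} to the fundamental--weight case by the tensor product argument of Lemma \ref{munuirred}. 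This mechanism is entirely absent from your proposal.
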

The proof of the theorem can be found in the rest of the section.

\begin{ex}
We discuss the finite dimensional irreducible global Weyl modules for the example  $(B_n, D_n)$. In this case recall $\theta_1 = \alpha_1 + \dots + \alpha_n$  and $\alpha_0 = \alpha_{n-1} + 2 \alpha_n.$ Thus, $W(\lambda)$ is an irreducible $\lie g[t]^{\tau}$--module and hence isomorphic to $\text{ev}_0^*V_{\lie g_0}(\lambda)$ if and only if $\lambda = r \lambda_{n-1}$ for $ r \in \mathbb{Z}_+.$

\end{ex}

\subsection{} Suppose that $\lambda$ satisfies the conditions of the theorem. Notice that $$W(\lambda)\cong\ev_0^* V_{\lie g_0}(\lambda)\iff \lie g[t]^\tau[s]w_\lambda=0,\ \ s\in\bn.$$ Recall from Section \ref{proptheta} that $\theta_{a_j-1}+\alpha_j-\alpha_0=\sum_{i\in I(j)\cup\{0\}}p_i\alpha_i\in R_0^+\cup\{0\}$. If $\theta_{a_j-1}=\alpha_0-\alpha_j$, we have
\begin{align*}0=(x^{+}_{\alpha_j}\otimes t^{ra_j+1})(x^{-}_{\alpha_0}\otimes 1)w_{\lambda}&=(x^{-}_{\theta_{a_j-1}}\otimes t^{ra_j+1})w_{\lambda}.\end{align*}
Otherwise, $\theta_{a_j-1}+\alpha_j-\alpha_0\in R_0^+$ and it follows that if  $p_i>0$ then $\boa_i(\theta_{a_j-1}-\alpha_0)\not=0$ and hence by our assumptions on $\lambda$ we have $\lambda(h_i)=0$, i.e. 
 $(\lambda, \theta_{a_j-1}+\alpha_j-\alpha_0)=0$. Using the defining relations of $W(\lambda)$ we get $$(x^{-}_{\theta_{a_j-1}-\alpha_0+\alpha_j}\otimes 1)w_{\lambda}=0.$$ Since $\lambda(h_0)=0$ we now have 
\begin{align*}0=(x^{-}_{\theta_{a_j-1}-\alpha_0+\alpha_j}\otimes 1)(x^{+}_{\alpha_j}\otimes t^{ra_j+1})(x^{-}_{\alpha_0}\otimes 1)w_{\lambda}&=(x^{-}_{\theta_{a_j-1}-\alpha_0+\alpha_j}\otimes 1)(x^{-}_{\alpha_0-\alpha_j}\otimes t^{ra_j+1})w_{\lambda}&\\&=(x^{-}_{\theta_{a_j-1}}\otimes t^{ra_j+1})w_{\lambda}.\end{align*}
So in either case we found that $(x^{-}_{\theta_{a_j-1}}\otimes t^{ra_j+1})w_{\lambda}=0$.
By Propostion \ref{facts} and the discussion in Section \ref{proptheta} we know that 
 $\lie g_1$ is an irreducible $\lie g_0$--module generated by $x_{\theta_{a_j}-1}^-$ by applying elements $x_i^+$, $i\in I(j)\cup\{0\}$ and so
$$(\lie g_1\otimes t\mathbb{C}[t^{a_j}])w_{\lambda}=0.$$ 
Assume that $(\lie g_m\otimes t^m\bc[t^{a_j}])w_{\lambda}=0$ for all $m$  with $1\leq m <k \leq a_j$. Since $1\leq k-m<k$, we also have $(\lie g_{k-m}\otimes t^{k-m}\bc[t^{a_j}])w_{\lambda}=0$ by our induction hypothesis. Now by Proposition~\ref{facts} we have $\lie g_k=[\lie g_{k-m},\lie g_m]$ if $k< a_j$ and $\lie g_k=[\lie g_{1},\lie g_{a_j-1}]$ if $k=a_j$ and hence using the induction hypothesis we obtain 
$$(\lie g_k\otimes t^k\bc[t^{a_j}])w_{\lambda}=0,$$  proving that $W(\lambda)$ is irreducible.

\subsection{} We prove the forward direction of Theorem \ref{criterion} in the rest of the section for which we need some additional results.
\begin{lem}\label{munuirred} Let $\mu,\nu\in P_0^+$  and assume that $W(\nu)$ is reducible. Then $W(\mu+\nu)$ is also reducible.
\end{lem}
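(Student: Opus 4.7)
The plan is to show that the reducibility of $W(\nu)$ forces $W(\mu+\nu)$ to carry non-zero mass in some strictly positive graded piece, which is incompatible with irreducibility: any irreducible graded module in $\tii$ of highest weight $\mu+\nu$ generated in degree zero must coincide with $\ev_0^*V_{\lie g_0}(\mu+\nu)$, which lives entirely in degree zero.

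To set up, write $\lie g[t]^\tau_+=\lie g[t]^\tau\cap(\lie g\otimes t\bc[t])$ for the part of strictly positive $t$-degree and set $\bu^+=\bu(\lie g[t]^\tau_+)$. The first step is to extend the assignment $w_{\mu+\nu}\mapsto v_\mu\otimes w_\nu$ to a graded surjection
\[\phi\colon W(\mu+\nu)\twoheadrightarrow M:=\bu(\lie g[t]^\tau)\cdot(v_\mu\otimes w_\nu)\subseteq \ev_0^*V_{\lie g_0}(\mu)\otimes W(\nu).\]
Well-definedness is a direct check of the relations \eqref{glweyl} on $v_\mu\otimes w_\nu$: the weight is $\mu+\nu$; any element of $\lie n^+[t]^\tau$ of positive $t$-degree kills $v_\mu$ through $\ev_0$, while the degree-zero part $\lie n^+\cap\lie g_0$ annihilates $v_\mu$ since $v_\mu$ is the $\lie g_0$-highest weight vector; and $(x_i^-\otimes 1)^{((\mu+\nu)(h_i)+1)}(v_\mu\otimes w_\nu)=0$ for $i\in I(j)\cup\{0\}$ follows from expanding via $\tilde\Delta$ and invoking the corresponding relations on $v_\mu$ and $w_\nu$.

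The crucial observation is that $W(\nu)[0]\cong V_{\lie g_0}(\nu)$ as $\lie g_0$-modules. Indeed $\lie g[t]^\tau[0]=\lie g_0$, so $W(\nu)[0]=\bu(\lie g_0)w_\nu$ is $\lie g_0$-cyclic; the relations defining $W(\nu)$ restrict in degree zero to precisely the defining relations of $V_{\lie g_0}(\nu)$, giving a $\lie g_0$-surjection $V_{\lie g_0}(\nu)\twoheadrightarrow W(\nu)[0]$ which must be an isomorphism by irreducibility of $V_{\lie g_0}(\nu)$. Hence the restriction to degree zero of the graded surjection $W(\nu)\twoheadrightarrow V(\nu,\bi_0)\cong\ev_0^*V_{\lie g_0}(\nu)$ is an isomorphism, so its kernel---non-zero because $W(\nu)$ is reducible---sits in strictly positive degrees. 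Choose $r\geq 1$ with $W(\nu)[r]\ne 0$. The PBW decomposition $\bu(\lie g[t]^\tau)=\bu(\lie g_0)\,\bu^+$ together with $\bu(\lie g_0)$ being concentrated in degree zero yields $W(\nu)[r]=\bu(\lie g_0)\,\bu^+[r]w_\nu$, forcing $\bu^+[r]w_\nu\ne 0$.

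The final ingredient is that for every $u\in\bu^+$ one has $u\cdot(v_\mu\otimes w_\nu)=v_\mu\otimes uw_\nu$: every term of $\tilde\Delta(u)$ other than $1\otimes u$ carries a non-trivial product of positive $t$-degree elements on the first tensor slot, all of which annihilate $v_\mu$ via $\ev_0$. Consequently $M[r]\supseteq v_\mu\otimes\bu^+[r]w_\nu\ne 0$, and since $\phi$ is a graded surjection, $W(\mu+\nu)[r]\ne 0$. Were $W(\mu+\nu)$ irreducible, the graded surjection onto its unique irreducible quotient $V(\mu+\nu,\bi_0)\cong\ev_0^*V_{\lie g_0}(\mu+\nu)$ would be an isomorphism, forcing $W(\mu+\nu)$ to be concentrated in degree zero---a contradiction. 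The main point demanding care is the identification $W(\nu)[0]\cong V_{\lie g_0}(\nu)$, which translates reducibility of $W(\nu)$ into a genuine non-vanishing in some positive graded piece; once that is secured, the rest of the argument is graded bookkeeping coupled with the elementary coproduct computation.
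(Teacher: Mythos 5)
Your proof is correct and follows essentially the same route as the paper: map $W(\mu+\nu)$ onto the submodule of $\ev_0^* V_{\lie g_0}(\mu)\otimes W(\nu)$ generated by $v_\mu\otimes w_\nu$, observe that reducibility of $W(\nu)$ forces something of positive $t$-degree to act non-trivially on $w_\nu$, and push this through the tensor factor since $v_\mu$ is annihilated by everything of positive degree. The paper phrases the middle step with a single Lie algebra element $x\in\lie g[t]^\tau[s]$, $s\ge 1$, with $xw_\nu\ne 0$, whereas you work with $\bu^+[r]w_\nu\ne 0$; both are equivalent and your extra detail (the identification $W(\nu)[0]\cong V_{\lie g_0}(\nu)$ and the coproduct checks) only makes explicit what the paper leaves as ``easily seen.''
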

\begin{pf}   It is easily seen using the defining relations of $W(\mu+\nu)$ that we have  a map of $\lie g[t]^\tau$--modules  $W(\mu+\nu)\rightarrow \text{ev}_0^{*} V_{\lie g_0}(\mu)\otimes W(\nu)$  extending the assignment $w_{\mu+\nu}\to v_\mu\otimes w_\nu$.  Since $W(\nu)$ is reducible there 
exists $x\in \lie g[t]^\tau[s]$, $s\geq 1$ with  $xw_{\nu}\neq 0$. Since $xv_\mu=0$, we now  get $x(v_{\mu}\otimes w_{\nu})=v_{\mu}\otimes xw_{\nu}\neq 0$. Hence $0\ne xw_{\mu+\nu}\in W(\mu+\nu)[s]$ and the result follows.
\end{pf}
\subsection{}\label{findimimp} 
 Suppose that $\lambda,\mu\in P_0^+$ are such that  there exists $0\ne \Phi\in
 \text{Hom}_{\lie g_0}(\lie g_1\otimes V_{\lie g_0}(\lambda),V_{\lie g_0}(\mu))$. Then $V:=V_{\lie g_0}(\lambda)\oplus V_{\lie g_0}(\mu)$ admits a $\lie g[t]^\tau$--structure extending the canonical $\lie g_0$--structure as follows:
$$(x\otimes 1)(v_1,v_2)=(xv_1,xv_2),\ \  (y\otimes t)(v_1,v_2)=(0, \Phi(y\otimes v_1)),\ \ \lie g[t]^\tau[s](v_1,v_2)=0,\ \  s\geq 2,$$
where $(v_1,v_2)\in V$,\ $x\in \lie g_0$ and $y\in\lie g_1$. It is easily checked that if  $\lambda-\mu\in Q^+$  then  $V$ is a quotient of the global Weyl module $W(\lambda)$. 

\begin{prop}\label{funirr1} The global Weyl module $W(\lambda_i)$ is not irreducible if $i=0$ or $i\in I(j)$ with $\boa_i(\theta_{a_j-1})\neq \boa_i(\alpha_0)$.
\end{prop}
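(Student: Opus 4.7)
My plan is to invoke the construction recalled in Section~\ref{findimimp}: for each $\lambda_i$ as in the hypothesis, I will exhibit a dominant weight $\mu_i\in P_0^+$ with $\lambda_i-\mu_i\in Q^+\setminus\{0\}$ together with a nonzero $\lie g_0$-equivariant map $\Phi_i\colon\lie g_1\otimes V_{\lie g_0}(\lambda_i)\to V_{\lie g_0}(\mu_i)$. The resulting two-step $\lie g[t]^\tau$-module $V=V_{\lie g_0}(\lambda_i)\oplus V_{\lie g_0}(\mu_i)$ from Section~\ref{findimimp} will then be a quotient of $W(\lambda_i)$ whose grade-$1$ summand $V_{\lie g_0}(\mu_i)$ is nonzero, forcing $W(\lambda_i)\not\cong\ev_0^*V_{\lie g_0}(\lambda_i)$ and hence the desired reducibility.

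I first verify that the condition $\lambda_i-\mu_i\in Q^+\setminus\{0\}$ alone already guarantees that the construction yields a bona fide quotient of $W(\lambda_i)$. The only nontrivial Weyl-module relation on $(v_{\lambda_i},0)$ is $\lie n^+[t]^\tau(v_{\lambda_i},0)=0$; grades $\ge 2$ are automatic (since $\lie g[t]^\tau[s]V=0$ there) and grade $0$ reduces to $\lie n_0^+v_{\lambda_i}=0$, so the content is the grade-$1$ vanishing $\Phi_i(x_\alpha^+\otimes v_{\lambda_i})=0$ for every $\alpha\in R^+$ with $\boa_j(\alpha)=1$. The image would lie in the weight space $V_{\lie g_0}(\mu_i)_{\lambda_i+\alpha}$; being nonzero would require $\mu_i-\lambda_i-\alpha\in Q_0^+$, and combining with $\mu_i-\lambda_i\in -Q^+$ and $Q_0^+\subset Q^+$ forces $\mu_i-\lambda_i-\alpha\in Q_0^+\cap(-Q^+)=\{0\}$, contradicting $\alpha\ne 0$ and $\lambda_i\ne\mu_i$.

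The principal task is then constructing $\mu_i$. My strategy is to identify a $\lie g_0$-highest weight vector in $\lie g_1\otimes V_{\lie g_0}(\lambda_i)$ whose weight is $\lie g_0$-dominant and strictly below $\lambda_i$ in the $Q^+$-order. The natural starting candidate is $x^-_{\theta_{a_j-1}}\otimes v_{\lambda_i}$ of weight $\lambda_i-\theta_{a_j-1}$, which sits strictly below $\lambda_i$ in $Q^+$ because $\theta_{a_j-1}\in R^+\subset Q^+$. When it is already $\lie g_0$-highest (equivalently, when $\theta_{a_j-1}(h_m)=0$ for all $m\in I(j)\cup\{0\}$) and $\lambda_i-\theta_{a_j-1}$ is $\lie g_0$-dominant, I simply take $\mu_i=\lambda_i-\theta_{a_j-1}$; in the running $(B_n,D_n)$ case this gives $\mu_0=\lambda_{n-1}$ for $i=0$, and for $i\in I(j)$ with $\lambda_i=\theta_{a_j-1}$ one can take $\mu_i=0$ using the trivial-rep summand of $\lie g_1\otimes\lie g_1^*$. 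In the remaining configurations one must apply appropriate raising operators $x_m^+$ (for $m$ with $\theta_{a_j-1}(h_m)>0$) to promote the candidate vector to an actual $\lie g_0$-highest weight vector.

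The main obstacle will be to execute this ``lifting'' step uniformly so that the resulting $\mu_i$ is both $\lie g_0$-dominant and strictly below $\lambda_i$ in $Q^+$-order, rather than equalling $\lambda_i$. The hypothesis $i=0$ or $\boa_i(\theta_{a_j-1})\ne\boa_i(\alpha_0)$ should enter exactly at this point: it rules out the ``balanced'' case where the raising procedure would cancel the initial drop by $\theta_{a_j-1}$ and return the weight to $\lambda_i$, reflecting the asymmetry between $\theta_{a_j-1}$ and $\alpha_0$ at the $i$-th coordinate that the hypothesis precisely detects.
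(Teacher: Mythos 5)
Your reduction to the construction of \secref{findimimp} is the right mechanism, and your verification that $\lambda-\mu\in Q^+\setminus\{0\}$ alone forces $\Phi(x_\alpha^+\otimes v_\lambda)=0$ for all $\alpha\in R_1^+$ (so that $V_{\lie g_0}(\lambda)\oplus V_{\lie g_0}(\mu)$ really is a quotient of $W(\lambda)$) is correct and usefully fills in a detail the paper leaves to the reader. For $i=0$, however, your candidate weight is off: $x^-_{\theta_{a_j-1}}\otimes v_{\lambda_0}$ has weight $\lambda_0-\theta_{a_j-1}$, which is generally \emph{not} $\lie g_0$--dominant (in $(B_n,D_n)$ it is $\lambda_0-(\alpha_1+\cdots+\alpha_n)$ and pairs to $-1$ with $h_1$), and the weight $\lambda_{n-1}$ you name equals $\lambda_0-\alpha_n=\lambda_0-w_\circ\theta_{a_j-1}$, not $\lambda_0-\theta_{a_j-1}$. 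The paper takes $\mu_0=\lambda_0-w_\circ\theta_{a_j-1}$, verifies $\mu_0\in P_0^+$ directly from $w_\circ\theta_{a_j-1}\in R^+_{a_j-1}$, and obtains the nonzero map $\Phi$ from the PRV theorem applied to the extreme weight $-w_\circ\theta_{a_j-1}$ of $\lie g_1$. Your ``apply raising operators to promote the vector'' step is never executed, so even this case is incomplete as written, though it is repairable by invoking PRV.

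The genuine gap is the second case. For $i\in I(j)$ with $\boa_i(\theta_{a_j-1})\neq\boa_i(\alpha_0)$ you offer only the expectation that the hypothesis ``should enter'' to make a uniform lifting produce a suitable $\mu_i$; nothing is proved, the hypothesis is never actually used, and there is no evidence that a uniform highest-weight candidate exists. The paper's argument for this case does not go through such a $\mu_i$ at all: assuming $W(\lambda_i)$ irreducible, it uses the finite-dimensionality criterion of Proposition~\ref{inforone} to force $\boa_i(\alpha_0)>0$, combines this with $\theta_{a_j-1}+\alpha_j-\alpha_0\in R_0^+$ to deduce $\boa_i(\theta_{a_j-1})>\boa_i(\alpha_0)>0$, and then shows by root-system combinatorics that no such node can exist when $\lie g$ is classical (so the case is vacuous there), while the exceptional types are disposed of by a case-by-case application of the \secref{findimimp} construction. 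Your proposal contains no substitute for this dichotomy, so the main case of the proposition remains unproved.
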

\begin{proof}  Recall that  $w_{\circ}$ is the longest element in the Weyl group defined by the simple roots $\{\alpha_i: i\in I(j)\}$ and note that $w_\circ \theta_{a_j-1}\in R^+_{a_j-1}$. It follows that $\alpha_0+w_\circ\theta_{a_j-1}\notin R $ and hence $0\le w_\circ\theta_{a_j-1}(h_0)\le 1$. Setting  $\mu_0=\lambda_0-w_{\circ}\theta_{a_j-1}$ we see that 
$\mu_0(h_i)=-w_{\circ}(\theta_{a_j-1})(h_i)\geq 0$ for $i\in I(j)$ and $ \mu_0(h_0)=1-w_{\circ}\theta_{a_j-1}(h_0)\geq 0$, i.e. $\mu_0\in P_0^+$.  Since $\lie g_{1}$ is an irreducible $\lie g_0$--module of lowest weight $-\theta_{a_j-1}$, the PRV theorem \cite{Mat89}, \cite{Ku88}  implies that $V_{\lie g_0}(\mu_0)$ is a direct summand of $\lie g_{1}\otimes V_{\lie g_0}(\lambda_0)$. It follows  from the discussion preceding the proposition that $W(\lambda_0)$ is not irreducible.

 It remains to consider a node $i\in I(j)$ with $\boa_i(\theta_{a_j-1})\neq \boa_i(\alpha_0)$. By way of contradiction suppose that  $W(\lambda_i)$ is irreducible. Using Proposition\ref{inforone}(ii)  we can assume that  $\boa_i(\alpha_0)>0$. Moreover,  by Section \ref{delta0} we know that $\theta_{a_j-1}-\alpha_0+\alpha_j\in R^+_0$ and hence  $\boa_i(\theta_{a_j-1})>\boa_i(\alpha_0)> 0$. If $\lie g$ is of classcial type, this is only possible if $\boa_i(\theta_{a_j-1})=2,\ \boa_i(\alpha_0)=1$ and hence we have a pair of roots of the form
$$\alpha_0=\cdots+\alpha_i+\cdots+2\alpha_{j}+\cdots,\ \ \theta_{a_j-1}=\cdots+2\alpha_i+\cdots+\alpha_{j}+\cdots$$
which is a contradiction. In other words, there is no such node if $\lie g$ is of classical type. If $\lie g$ is of exceptional type, a case by case analysis shows that $W(\lambda_i)$ is not irreducible using the discussion preceding the proposition.
\end{proof}
\subsection{}
 Assume that $\lambda$ violates one of the conditions in \eqref{condir1}, i.e. $\lambda(h_i)>0$ where $i=0$ or $i\in I(j)$ and $\boa_i(\theta_{a_j-1})\neq \boa_i(\alpha_0)$. Now setting $\mu=\lambda-\lambda_i$ and $\nu=\lambda_i$ in Lemma \ref{munuirred}  and using Proposition \ref{funirr1} we see that $W(\lambda)$ is not irreducible which completes the proof of Theorem \ref{criterion}.


\section{Structure of local Weyl modules}

Recall from \secref{section2} that the equivariant map algebra $\lie g[t]^\tau$ is not isomorphic to an equivariant map algebra where the group $\Gamma$ acts freely on $A$.  When $\Gamma$ acts freely on $A$, the finite dimensional representation theory of the equivariant map algebra is closely related to that of the map algebra $\lie g \otimes A$ (see for instance \cite{FKKS11}). We have already seen a major difference between the finite dimensional representation theory of $\lie g [t]^{\tau}$ and that of $\lie g [t]$. Specifically, in \secref{section5}  we showed that unlike in the case of the current algebra, the  global Weyl module for $\lie g [t]^\tau$ can be finite--dimensional and irreducible for nontrivial dominant integral weights. 

In this section we discuss the structure of  local Weyl modules for the case of $(B_n, D_n)$ where $\lambda$ is a multiple of a fundamental weight, in which case $\ba_{\lambda}$ is a polynomial algebra.  We finish the section by discussing the complications in determining the structure of local Weyl modules for an abritrary weight $\lambda \in P_0^+$ . The simplest example is the case of $\omega_{n-1}= \lambda_0+\lambda_{n-1}$. Note that in this case $\ba_{\omega_{n-1}}$ is not a polynomial algebra.

 \subsection{} Recall that we have a well established theory of local Weyl modules for the current algebra $\lie g[t]$.   Given $\lambda\in P^+$ we denote by  $W_{\loc}^{\lie g}(\lambda)$, $\lambda\in P^+$ the $\lie g[t]$--module generated by an element $w_\lambda$ and defining relations $$\lie n^+[t] w_\lambda=0, \ \ (h\otimes t^r)w_\lambda=\delta_{r,0}\lambda(h)w_\lambda=0,\ \ (x_i^-\otimes 1)^{\lambda(h_i)+1}w_\lambda=0.$$ We remind the reader that $\{\omega_i: 1\le i\le n\}$  is a set of fundamental weights for $\lie g$ with respect to $\Delta$. The following was proved in \cite{FoL07} and \cite{Na11}.
\begin{equation}\label{untw}\dim W_{\loc}^{\lie g}(\lambda)=\prod_{i=1}^n\dim \left(W_{\loc}^{\lie g}(\omega_i)\right)^{m_i},\ \ \lambda=\sum_{i=1}^nm_i\omega_i\in P^+.
 \end{equation}
We can clearly regard $W_{\loc}^{\lie g}(\lambda)$, $\lambda\in P^+$ as a graded $\lie g[t]^\tau$ module by restriction, however it is not the case that this restriction gives a local Weyl module for $\lie g [t]^\tau$. The relationship between local Weyl modules for $\lie g [t]^\tau$ and the restriction of local Weyl modules for $\lie g [t]$ is more complicated, as we now explain.

 \subsection{} Given $z\in\bc^\times$ we have a isomorphism of Lie algebras $\eta_z: \lie g[t]\to \lie g[t]$ given by $(x\otimes t^r)\to (x\otimes (t+z)^r)$ and let $\eta_z^*V$ be the  pull--back through this homomorphism of a representation $V$ of $\lie g[t]$.  Suppose that $V$ is such that there exists $N\in\bz_+$ with $(\lie g\otimes t^m)V=0$ for all $m\geq N$. Then $(\lie g\otimes (t-z)^m)\eta_z^*V=0$ for all $m\geq N$.
 In particular we can regard the module $\eta_z^*V$ as a module for the finite--dimensional Lie algebra $\lie g\otimes \bc[t]/(t-z)^N$. Following \cite{FKKS11}, since $z\in\bc^\times$ we have  $$\lie g[t]/\lie g\otimes (t-z)^N\bc[t]\cong \lie g[t]^\tau/(\lie g\otimes (t-z)^N\bc[t])^\tau,$$ so if $V$ is a cyclic module for $\lie g[t]$ then $\eta_z^*V$ is a cyclic module for $\lie g[t]^\tau$.  

We now need a general construction. Given any finite--dimensional cyclic $\lie g[t]^\tau$--module $V$ with cyclic vector $v$  define an increasing filtration of $\lie g_0$--modules $$0\subset V_0=\bu(\lie g[t]^\tau)[0]v\subset\cdots\subset V_r=\sum_{s=0}^r\bu(\lie g[t])^\tau[s]v\subset\cdots\subset  V.$$ The associated graded space $\gr V$  is naturally a graded module for $\lie g[t]^\tau$ via the action $$(x \otimes t^s)\overline{w} = \overline{(x \otimes t^s)w}, \ \overline{w} \in V_r/V_{r-1} .$$   Suppose that $v$ satisfies the relations $$\lie n^+[t]^\tau v=0,\ \  (h\otimes t^{2k})v=d_k(h) v,\ \ \ \  d_k(h)\in\bc, \ \ k\in\bz_+, \ \ h\in\lie h.$$ Then since $\dim V<\infty$ it follows that $d_0(h)\in \bz_+$; in particular there exists $\lambda\in P_0^+$ such that $d_0(h)=\lambda(h)$ and a simple checking shows that $\gr V$ is a quotient of $W_{\loc}(\lambda):=W(\lambda,\mathbf{I}_0)$.

The following is now immediate.
 \begin{lem} \label{gradedlowerbound} Let $\lambda\in P^+$ and $z\in\bc^\times$.  The $\lie g[t]^\tau$--module $\gr\left(\eta^*_z W_{\loc}^{\lie g}(\lambda)\right)$ is a quotient of $W_{\loc}(\lambda)$ and hence $$\dim W_{\loc}(\lambda)\ge \dim W_{\loc}^{\lie g}(\lambda).$$\hfill\qedsymbol
 \end{lem}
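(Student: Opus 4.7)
The plan is to recognize $V := \eta_z^* W_{\loc}^{\lie g}(\lambda)$ as an instance of the general finite-dimensional cyclic $\lie g[t]^\tau$-module construction set up immediately before the lemma, and then verify that its cyclic generator satisfies the defining relations of $W_{\loc}(\lambda) = W(\lambda,\mathbf{I}_0)$.

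First, I would check that $V$ is a finite-dimensional cyclic $\lie g[t]^\tau$-module with cyclic vector $w_\lambda$. Since $W_{\loc}^{\lie g}(\lambda)$ is finite-dimensional there is some $N$ with $(\lie g\otimes t^N\bc[t])\,W_{\loc}^{\lie g}(\lambda)=0$; pulling back through $\eta_z$ gives $(\lie g\otimes (t-z)^N\bc[t])\,V=0$. Because $z\in\bc^\times$ the isomorphism $\lie g[t]/\lie g\otimes (t-z)^N\bc[t]\cong\lie g[t]^\tau/(\lie g\otimes (t-z)^N\bc[t])^\tau$ recalled just above the lemma shows that $\lie g[t]$ and $\lie g[t]^\tau$ have the same image in $\End V$, so cyclicity as a $\lie g[t]$-module passes to cyclicity as a $\lie g[t]^\tau$-module.

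Next I would verify the hypotheses of the general construction. For $x\in\lie n^+$ and any $r$, the action in $V$ is $(x\otimes t^r)\cdot w_\lambda=(x\otimes(t+z)^r)w_\lambda$, which vanishes because $\lie n^+[t]$ annihilates the original highest weight vector; in particular $\lie n^+[t]^\tau w_\lambda=0$. For $h\in\lie h$ and $k\in\bz_+$, a parallel computation yields $(h\otimes t^{a_j k})\cdot w_\lambda=z^{a_j k}\lambda(h)w_\lambda$, which is a scalar multiple of $w_\lambda$, so the hypotheses of the general construction hold with $d_k(h)=z^{a_j k}\lambda(h)$. In particular $d_0=\lambda\in P^+\subset P_0^+$, and the construction produces a surjection $W(\lambda)\twoheadrightarrow\gr V$. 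Now the key observation: although $(h\otimes t^{a_j k})w_\lambda=z^{a_j k}\lambda(h)w_\lambda\neq 0$ in $V$ for $k\ge 1$, this scalar lies in $V_0$, hence its image in $V_{a_j k}/V_{a_j k-1}$ is zero. Thus in $\gr V$ the image of $w_\lambda$ is annihilated by every positively graded element of $\lie h[t]^\tau$, i.e.\ by $\mathbf{I}_0\subset\ba_\lambda$, so the surjection factors through $W_{\loc}(\lambda)=W(\lambda,\mathbf{I}_0)$.

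The dimension bound is then immediate from $\dim\gr V=\dim V=\dim W_{\loc}^{\lie g}(\lambda)$. No step is truly hard; the only point that requires a moment's care is the cyclicity in the $\lie g[t]^\tau$-action, which is where the hypothesis $z\neq 0$ (used to identify the two finite-dimensional quotient algebras) enters in an essential way.
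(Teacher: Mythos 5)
Your proof is correct and follows exactly the route the paper intends: the lemma is stated as an immediate consequence of the preceding construction ($\eta_z^*$ pull-back, the isomorphism of quotients for $z\neq 0$ giving cyclicity over $\lie g[t]^\tau$, and the $\gr$ construction applied to a cyclic vector killed by $\lie n^+[t]^\tau$ and acted on by scalars by $\lie h[t]^\tau$), and you have supplied precisely the "simple checking" the paper omits, including the right observation that the nonzero scalars $z^{a_jk}\lambda(h)$ for $k\ge 1$ die in the associated graded, which is why one lands on $W(\lambda,\mathbf{I}_0)$ rather than some other local Weyl module.
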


\subsection{} For the rest of this section, we consider the case of $(B_n, D_n)$, and study local Weyl modules corresponding to weights $r\lambda_i \in P_0^+$, where $r \in \mathbb{Z}_+$, and $0 \leq i \leq n-2$ (the $i=n-1$ case is discussed in \secref{section5}, where these local Weyl modules are shown to be finite--dimensional and irreducible). We remind the reader that $\lambda_0=\omega_n$, $\lambda_i=\omega_i$, $1\leq i\leq n-2$ and $\lambda_{n-1}=\omega_{n-1}-\omega_n$.  In particular, we show the reverse of the  inequality in \lemref{gradedlowerbound}, which proves the following proposition.  

\begin{prop}\label{fundmult} Assume that $(\lie g,\lie g_0)$ if of type  $(B_n,D_n)$.   For $0\le i\le n-2$  and $r\in\bz_+$  we have an isomorphism of $\lie g[t]^\tau$--modules $$W_{\loc}(r\lambda_i)\cong \gr\left(\eta^*_z W_{\loc}^{\lie g}(r\lambda_i)\right).$$
\end{prop}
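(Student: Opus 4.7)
The plan is to establish the reverse inequality $\dim W_{\loc}(r\lambda_i)\leq \dim W_{\loc}^{\lie g}(r\lambda_i)$ to complement Lemma~\ref{gradedlowerbound}; equality of dimensions then forces the surjection $W_{\loc}(r\lambda_i)\twoheadrightarrow \gr(\eta_z^*W_{\loc}^{\lie g}(r\lambda_i))$ to be an isomorphism.

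First, observe that $r\lambda_i\in P^+$ for the cases at hand: $\lambda_0=\omega_n$ and $\lambda_i=\omega_i$ for $1\leq i\leq n-2$ are all fundamental weights of $B_n$, so $W_{\loc}^{\lie g}(r\lambda_i)$ is well defined and its dimension is given by \eqref{untw}. Next, Theorem~\ref{alambda} together with the facts $r\lambda_i(h_0)=0$ for $1\leq i\leq n-2$ and $r\lambda_0(h_{n-1})=0$ reduces the Stanley--Reisner presentation of $\ba_{r\lambda_i}$ to a polynomial ring in a single batch of variables: $\ba_{r\lambda_i}\cong \bc[P_{i,1},\dots,P_{i,r}]$ for $1\leq i\leq n-2$ and $\ba_{r\lambda_0}\cong \bc[P_{n,1},\dots,P_{n,r}]$.

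The core technical step---and the main obstacle---is to exhibit an explicit $\ba_{r\lambda_i}$--spanning set of $W(r\lambda_i)$ of cardinality exactly $\dim W_{\loc}^{\lie g}(r\lambda_i)$. I would adapt the Chari--Pressley strategy to $\lie g[t]^\tau$: refine the PBW argument of Proposition~\ref{fingen} by ordering the root vectors $x_\alpha^-\otimes t^s$ and combining the Garland--type identities \eqref{gar1}--\eqref{gar2} applied both to the simple roots $\alpha_k$, $k\in I(j)$, and to the long root $\alpha_0$ to bound the lengths of PBW monomials in each root coordinate. As a check on the target count, one identifies $\eta_z^*W_{\loc}^{\lie g}(r\lambda_i)$ as a cyclic $\lie g[t]^\tau$--module which is a quotient of $W(r\lambda_i,\mathbf{I}_z)$ at the evaluation maximal ideal $\mathbf{I}_z$ prescribed by $P_k(u)v=(1-z^{a_j}u)^{r\lambda_i(h_k)}v$ (a direct computation using $(h\otimes t^m)v=z^m r\lambda_i(h)v$ in $\eta_z^*W_{\loc}^{\lie g}(r\lambda_i)$, together with the $\lie{sl}_2$--relation $(x_0^-\otimes 1)^{r\lambda_i(h_0)+1}v=0$ valid since $\alpha_0\in R^+$ and $r\lambda_i\in P^+$). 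This exhibits a finite--dimensional cyclic quotient of $W(r\lambda_i)$ of dimension $\dim W_{\loc}^{\lie g}(r\lambda_i)$ at a non--graded maximal ideal, pinning down the expected rank.

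Once the spanning set is constructed, the lower bound from Lemma~\ref{gradedlowerbound} forces it to be an $\ba_{r\lambda_i}$--basis. Hence $W(r\lambda_i)$ is a free $\ba_{r\lambda_i}$--module of rank $\dim W_{\loc}^{\lie g}(r\lambda_i)$, and $\dim W(r\lambda_i,\mathbf{I})=\dim W_{\loc}^{\lie g}(r\lambda_i)$ for every maximal ideal $\mathbf{I}$. Specializing to $\mathbf{I}=\mathbf{I}_0$ gives $\dim W_{\loc}(r\lambda_i)=\dim W_{\loc}^{\lie g}(r\lambda_i)$, so the surjection from Lemma~\ref{gradedlowerbound} is an isomorphism.
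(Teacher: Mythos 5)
Your overall architecture is the same as the paper's: combine the lower bound of \lemref{gradedlowerbound} with the reverse inequality $\dim W_{\loc}(r\lambda_i)\le\dim W^{\lie g}_{\loc}(r\lambda_i)$, and your identification of $\ba_{r\lambda_i}$ as a polynomial ring (in $P_{i,1},\dots,P_{i,r}$, resp.\ $P_{n,1},\dots,P_{n,r}$ when $i=0$) is correct, as is the closing logic that an $\ba_{r\lambda_i}$--spanning set of the right cardinality would bound every local Weyl module and force the surjection to be an isomorphism. The gap is that the reverse inequality is the entire content of the proposition, and the step you yourself flag as ``the core technical step'' is not carried out; moreover the route you sketch for it would not succeed. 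Ordering root vectors and applying the Garland identities \eqref{gar1}--\eqref{gar2} coordinate by coordinate is exactly what is already done in the proof of Proposition \ref{fingen}, and for an algebra of rank greater than one it only bounds, for each positive root separately, the powers of $t$ that can occur; it does not control the total length of PBW monomials sharply enough to reach $\prod_i\bigl(\dim W^{\lie g}_{\loc}(\omega_i)\bigr)^{m_i}$. Recall that even for the current algebra $\lie g[t]$ the upper bound in \eqref{untw} is the hard half of the theorem, due to Fourier--Littelmann and Naoi by quite different methods (Demazure modules, fusion products); it does not follow from the Chari--Pressley PBW/Garland analysis except for $\lie{sl}_2$. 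So ``adapting the Chari--Pressley strategy'' as described reduces the proposition to an unproved claim essentially equivalent to it.

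The paper's proof gets around this by importing the known result \eqref{untw} for $\lie g_0[t^2]\cong D_n[t^2]$ rather than reproving it. One first shows (Lemmas \ref{teil1} and \ref{teil2} and the lemma following them) that $W_{\loc}(r\lambda_i)$ is spanned over $\bu(\lie g_0[t^2])$ by $w_r$ together with the ordered monomials $Y(i,\bos)w_r=(x^-_{i,n}\otimes t^{2s_1+1})\cdots(x^-_{i,n}\otimes t^{2s_k+1})w_r$ with $0\le s_1\le\cdots\le s_k\le r-k$; here the Garland/$\lie{sl}_2$ bound is applied only in the single ``odd'' direction, via the identity \eqref{idenin} relating these monomials to $(x^-_{i+1,n}\otimes t)^{k}(x^-_i\otimes t^{2s_1})\cdots(x^-_i\otimes t^{2s_k})w_r$. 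One then filters $W_{\loc}(r\lambda_i)$ by the $\lie g_0[t^2]$--submodules generated by these vectors in a suitable order; each subquotient is a quotient of the $D_n[t^2]$ local Weyl module of highest weight $(r-k)\omega_i+k\omega_{i-1}$, whose dimension is known by \eqref{untw} and Proposition \ref{locweylcurrent}, and summing these bounds over $k$ with the binomial coefficient $\binom{r}{k}$ gives exactly $\dim W^{\lie g}_{\loc}(r\lambda_i)$. Some device of this kind, which trades the missing sharp PBW estimate for the known current--algebra dimension formula, is what your proposal still needs.
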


 \subsection{} We recall standard results for local Weyl modules for the current algebra $\lie g[t]$.
 \begin{prop} \label{locweylcurrent}\begin{enumerit} 
 \item[(i)] Let $x,y,h$ be the standard basis for $\lie{sl}_2$ and set $y\otimes t^r=y_r$,  For $\lambda\in P^+$ the local Weyl module $W_{\loc}^{\lie{sl}_2}(\lambda)$ has basis $$ \{w_\lambda,\  \ y_{r_1}\cdots y_{r_k}w_\lambda: 1\le k\le \lambda(h),\ \   0\le r_1\le \cdots\le  r_k\le \lambda(h)-k\}.$$
 Moreover, $y_sw_{\lambda}=0$ for all $s\geq \lambda(h)$.
 \item[(ii)] Assume that $\lie g$ is of type $B_n$ (resp. $D_n$) and  assume that $i \ne n$ (resp. $i\ne n-1, n$). Then $$W_{\loc}^{\lie g}(\omega_i)\cong_{\lie g} V_{\lie g}(\omega_i)\oplus V_{\lie g}(\omega_{i-2})\oplus\cdots\oplus V_{\lie g}(\omega_{\bar i}), $$ where $$V_{\lie g}(\omega_{\bar i})  = V_{\lie g}(\omega_1),\ \ i\ {\rm{odd}},\ \ V_{\lie g}(\omega_{\bar i}) = \bc,\ \ i\ \ {\rm{even}}.$$
\item[(iii)] Assume that $\lie g$ is of type $B_n$ (resp. $D_n$), and let $i=n$ (resp. $i \in \{ n-1, n \}$).  Then $$W_{\loc}^{\lie g}(\omega_i)\cong_{\lie g} V_{\lie g}(\omega_i).$$
 \end{enumerit}\hfill\qedsymbol
 \end{prop}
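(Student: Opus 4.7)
The three parts are essentially independent and I would address them in turn.

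\textbf{Part (i).} I would follow the Chari--Pressley approach \cite{CP01}. Starting from the defining relations together with Garland's identities \eqref{gar1} and \eqref{gar2}, the first step is to derive $y_s w_\lambda = 0$ for all $s \geq \lambda(h)$: applying \eqref{gar1} with $r = \lambda(h)+1$ gives $P_{\lambda(h)+1}w_\lambda = 0$, and the recursion defining the $P_r$ propagates this to show that $(h \otimes t^s)w_\lambda$ is expressible in terms of $(h \otimes t^{s'})w_\lambda$ with $s' < s$ for all large $s$; then \eqref{gar2} applied to $(y\otimes 1)^{\lambda(h)+1}w_\lambda = 0$, in the presence of these $P$-relations, yields $y_s w_\lambda = 0$. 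A standard PBW argument, using commutation through $\lie n^+[t]$ to push raising operators past the $y_r$'s, shows that $W_{\loc}^{\lie{sl}_2}(\lambda)$ is spanned by the proposed vectors, giving the upper bound $2^{\lambda(h)}$ on the dimension. For the reverse inequality and the linear independence, I would exhibit the tensor product $\ev_{z_1}^{*} V \otimes \cdots \otimes \ev_{z_{\lambda(h)}}^{*} V$, where $V$ is the two-dimensional $\lie{sl}_2$-module and the $z_i$ are distinct nonzero scalars, as a cyclic quotient of $W_{\loc}^{\lie{sl}_2}(\lambda)$; it has dimension $2^{\lambda(h)}$, forcing equality and the basis property.

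\textbf{Part (iii).} For the spin node $i=n$ in type $B_n$ and for $i \in \{n-1, n\}$ in type $D_n$, the representation $V_{\lie g}(\omega_i)$ is minuscule, so its weight spaces are one-dimensional and its set of weights is a single $W$-orbit. The claim $W_{\loc}^{\lie g}(\omega_i) \cong V_{\lie g}(\omega_i)$ then follows once I verify $(\lie g \otimes t\bc[t]) w_{\omega_i} = 0$. By weight considerations and the minuscule property, any vector $(x_\alpha^\pm \otimes t^r) u$ with $u$ of weight $w\omega_i$, $w \in W$, must lie in an already-present weight space of $V_{\lie g}(\omega_i)$; using $(x_k^-\otimes 1)^{\omega_i(h_k)+1} w_{\omega_i} = 0$ for $k \in I$ together with Garland's identities from part (i), I would show these contributions all vanish, so $W_{\loc}^{\lie g}(\omega_i)$ collapses to the irreducible $V_{\lie g}(\omega_i)$.

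\textbf{Part (ii).} For a non-spin fundamental node $i$ in type $B_n$ or $D_n$, one identifies $W_{\loc}^{\lie g}(\omega_i)$ with the level-one Kirillov--Reshetikhin module $\KR(\omega_i, 1)$. The $\lie g$-decomposition of this module is computed in \cite{FoL07, Na11} and gives precisely the formula in the proposition; I would simply cite this.

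\textbf{Main obstacle.} The hardest piece is the decomposition formula in part (ii). While it is a known consequence of the theory of Kirillov--Reshetikhin modules, giving a self-contained proof requires either (a) explicitly constructing a $\lie g$-module quotient of $W_{\loc}^{\lie g}(\omega_i)$ with the correct character via a tensor product of evaluation modules and branching rules specific to $B_n$ and $D_n$, or (b) invoking Demazure-type realizations of these modules. Parts (i) and (iii) are essentially formal given Garland's identities and the minuscule property, respectively.
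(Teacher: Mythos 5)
The paper does not prove this proposition at all: it is stated with a \qedsymbol\ and introduced by ``We recall standard results for local Weyl modules for the current algebra,'' the intended sources being \cite{CP01} for (i) and \cite{FoL07}, \cite{Na11} (and the theory of level-one Kirillov--Reshetikhin/Demazure modules) for (ii) and (iii). So there is no in-paper argument to compare against; your outline is a reasonable reconstruction of the standard proofs, and for part (ii) citing the literature is exactly what the authors do.

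One step in your part (i), as written, does not work. The module $\ev_{z_1}^{*}V\otimes\cdots\otimes\ev_{z_{\lambda(h)}}^{*}V$ with the $z_i$ nonzero is \emph{not} a quotient of $W_{\loc}^{\lie{sl}_2}(\lambda)$: the defining relation $(h\otimes t^r)w_\lambda=0$ for $r\ge 1$ fails on its highest weight vector (where $h\otimes t$ acts by $\lambda(h)$-fold sums of the $z_i$). It is a quotient of the \emph{global} Weyl module at a non-graded maximal ideal. To get the lower bound $\dim W_{\loc}^{\lie{sl}_2}(\lambda)\ge 2^{\lambda(h)}$ you must either invoke the theorem that the dimension of a local Weyl module is independent of the maximal ideal (which is part of what is being proved here, so circular in a self-contained account), or pull back by $\eta_z$ and pass to the associated graded module, which is a quotient of the graded local Weyl module --- precisely the mechanism the paper itself sets up in Lemma \ref{gradedlowerbound}. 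With that repair, together with the spanning argument (which correctly gives the upper bound $\sum_k\binom{\lambda(h)}{k}=2^{\lambda(h)}$), part (i) is complete. Also note that in the graded local Weyl module the relations $P_rw_\lambda=0$, $r\ge1$, hold by definition, so the preliminary manipulation you describe with \eqref{gar1} is unnecessary; one applies \eqref{gar2} directly to $(x^-\otimes1)^{(\lambda(h)+1)}w_\lambda=0$ to get $y_{\lambda(h)}w_\lambda=0$ and then applies $h\otimes t^{s-\lambda(h)}$ for larger $s$. Your part (iii) is correct and is most cleanly implemented by observing that minuscularity gives $\omega_i(h_\alpha)\le1$ for every $\alpha\in R^+$, so the $\lie{sl}_2$-statement in (i) forces $(x_\alpha^-\otimes t^s)w_{\omega_i}=0$ for $s\ge1$; since $\lie g\otimes t\bc[t]$ is an ideal annihilating the cyclic vector, the module reduces to $V_{\lie g}(\omega_i)$.
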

 We remind the reader that $$\dim V_{\lie g}(\omega_i)=\begin{cases}\binom{2n+1}{i},\ \ \lie g= B_n, \ i \ne n\\ \\ \binom{2n}{i},\ \ \lie g=D_n, \  i \ne n-1, n. \end{cases}$$  Moreover, if  $\lie g$ is of type $B_n$, $$\dim V_{\lie g}(\omega_n)= 2^n,$$ and if $\lie g$ is of type $D_n$ and $ i \in \{n-1 , n \}$, then $$\dim V_{\lie g}(\omega_i)= 2^{n-1}.$$ 

\subsection{} 
Our goal is to prove that $$\dim W_{\loc}^{\lie g}(r\lambda_i)\ge \dim W_{\loc}(r\lambda_i),\ \ r\in\bn.$$ The proof needs several additional results, and we consider the cases $1 \leq i \leq n-2$ and $i=0$ separately. 

Recall that $\lie g_0 [t^2] \subset \lie g[t]^\tau,$ and so $W_{\loc}(r\lambda_i)$ can be regarded as a $\lie g_0[t^2]$--module by pulling back along the inclusion map  $\lie g_0 [t^2]  \hookrightarrow \lie g[t]^\tau.$ For ease of notation we  denote the element $w_{r\lambda_i}$ by $w_r$.  

\begin{lem}\label{teil1} \begin{enumerit}   \item[(i)] For $1 \leq i \leq n-2, $ $W_{\loc}(r\lambda_i)$ is generated as a  $\lie g_0[t^2]$--module by $w_r$ and $Yw_r$ where $Y$ is a monomial in the the elements $$ (x^-_{p,n}\otimes t^{2s+1})w_r,\ \ p\le i,\ \ 0\le s< r.$$

\item[(ii)] $W_{\loc}(r\lambda_0)$ is generated as  a $\lie g_0[t^2]$--module by $w_r$ and $Yw_r$ where $Y$ is a monomial in the the elements $$ (x^-_{p,n}\otimes t^{2s+1})w_r,\ \ p\le n,\ \ 0\le s< r.$$
\end{enumerit}

\begin{proof} First, for $1 \leq i \leq n-2$ the defining relation $x^-_0 w_r=0$ implies that $$(x^-_0\otimes t^{2s})w_r= ( x^-_{n-1}\otimes t^{2s})w_r= (x^-_n\otimes t^{2s+1})w_r=0,\ \ s\ge 0.$$ Since $x_p^-w_r=0$ if $p\ne i$ it follows that \begin{equation}\label{derfa}(x^-_{p,n}\otimes t^{2s+1})w_r=0,\ \ s\ge 0,\ \  p>i.\end{equation} Observe also that  $$(x^-_i)^{r+1}w_r=0\implies (x^-_i\otimes t^{2s})w_r=0,\ \ s\ge r,$$ and hence we also have that $$(x^-_{p,n}\otimes t^{2s+1})w_r=0,\ \ s\ge r,\ \ p\le i.$$
 A simple application of the PBW theorem now gives (i).  

For the case $i=0,$ we have $$ (x^-_{k,p} \otimes t^{2s})w_r = 0, \ \ 1 \leq k \leq p \leq n-1, \ s \geq 0.$$ The relation $(x^-_0)^{s+1}w_r = 0$ for $ s \geq r$  implies that $$ (x^-_0 \otimes t^{2s})w_r =0, \ \ s \geq r $$ and so $$ (x^-_n \otimes t^{2s+1})w_r = 0, \ \ s \geq r. $$ Hence $$ (x^-_{p,n} \otimes t^{2s+1})w_r=0, \ \ 1 \leq p \leq n, \ s \geq r$$
and (ii) is now clear.  
\end{proof}
\end{lem}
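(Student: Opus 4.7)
The plan has three ingredients: a PBW decomposition of $\bu(\lie n^-[t]^\tau)$, a family of vanishing relations on $w_r$, and a PBW-ordering argument that combines them.

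Since $\lie n^-[t]^\tau = (\lie n^-\cap\lie g_0)\otimes\bc[t^2]\oplus (\lie n^-\cap\lie g_1)\otimes t\bc[t^2]$ as a vector space, PBW yields
\[W_{\loc}(r\lambda_i) = \bu(\lie n^-[t]^\tau)w_r \subseteq \bu(\lie g_0[t^2])\cdot\operatorname{span}\{Yw_r\},\]
where $Y$ ranges over ordered monomials in the basis $\{x^-_{p,n}\otimes t^{2s+1} : 1\le p\le n,\ s\ge 0\}$ of $(\lie n^-\cap\lie g_1)\otimes t\bc[t^2]$ (using that in type $B_n$ the short positive roots are precisely the $\alpha_{p,n}$). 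So it suffices to show that any monomial containing a \emph{forbidden} factor (meaning $p>i$ or $s\ge r$ for (i); just $s\ge r$ for (ii)) gives $Yw_r$ that is already captured by the $\lie g_0[t^2]$-action on monomials with no forbidden factor.

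The substance is establishing $(x^-_{p,n}\otimes t^{2s+1})w_r=0$ for every forbidden $(p,s)$, which I would do in two waves. First, for every simple root $\alpha_q$ with $q\in I(j)\cup\{0\}$ and $r\lambda_i(h_q)=0$, the $\lie{sl}_2[t^2]$-subalgebra attached to $\alpha_q$ acts on $w_r$ through its local Weyl module of highest weight $0$, which is trivial; hence $(x_q^-\otimes t^{2s})w_r=0$ for all $s$. For the unique node $i$ at which $r\lambda_i(h_i)=r>0$, Garland's relations for $\alpha_i$ together with $(x_i^-)^{r+1}w_r=0$ force $(x_i^-\otimes t^{2s})w_r=0$ whenever $s\ge r$. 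Second, transfer these vanishings from $\lie g_0$ to $\lie g_1$ using well-chosen commutators: the bracket $[x^+_{\alpha_{n-1,n}}\otimes t^{2s+1},\ x_0^-]$ is a nonzero scalar multiple of $x_n^-\otimes t^{2s+1}$, and an analogous bracket with $x_n^+\otimes t^{2s+1}$ produces $x^-_{n-1,n}\otimes t^{2s+1}$; when both elements of the bracket annihilate $w_r$ (which they do under our hypotheses), the image does as well. A descending induction on $p$, using $[x_p^-\otimes 1,\ x^-_{p+1,n}\otimes t^{2s+1}]\in\bc^\times(x^-_{p,n}\otimes t^{2s+1})$ together with $x_p^-w_r=0$ for $p\ne i$, then propagates the vanishing to all remaining forbidden $(p,s)$.

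Once the vanishing is in hand, impose an order on the basis of $(\lie n^-\cap\lie g_1)\otimes t\bc[t^2]$ in which the forbidden elements come last. In any PBW-ordered monomial $Y=X_1\cdots X_k$ containing a forbidden factor, the rightmost factor $X_k$ is itself forbidden and kills $w_r$, so $Yw_r=0$; hence only ordered monomials in the prescribed admissible range survive, yielding (i) and (ii) simultaneously. The main obstacle is the second wave of the vanishing step: for each forbidden pair $(p,s)$ one must produce a commutator identity expressing $x^-_{p,n}\otimes t^{2s+1}$ in terms of elements that are known to annihilate $w_r$, which requires careful bookkeeping with the $(B_n,D_n)$ root data, especially for the boundary cases $p\in\{n-1,n\}$ and for the transition between case (i) and case (ii) at $i=0$.
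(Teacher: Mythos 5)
Your proposal is correct and follows essentially the same route as the paper: establish the vanishing $(x^-_{p,n}\otimes t^{2s+1})w_r=0$ for the forbidden pairs $(p,s)$ by combining the $\lie{sl}_2[t^2]$ relations at the simple roots of $\lie g_0$ with commutators against elements of $\lie n^+[t]^\tau$ and against the $x_p^-$ with $\lambda(h_p)=0$, then conclude by a PBW ordering. The paper states these steps more tersely, but the commutator identities and the ordering argument you supply are exactly the ones it implicitly uses.
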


 \subsection{} We now prove,
 \begin{lem}\label{teil2} \begin{enumerit}
  \item[(i)] For $1 \leq i \leq n-2$, suppose that $Y=(x^-_{p_1,n}\otimes t^{2s_1+1})\cdots (x^-_{p_k, n}\otimes t^{2s_k+1})$ where $p_1\le \cdots\le p_k\le i$. Then $Yw_r$ is in the $\lie g_0[t^2]$--module generated by elements $Zw_r$ where $Z$  is a monomial  in the elements $(x^-_{i,n}\otimes t^{2s+1})$ with $s\in \bz_+$.
\item[(ii)] For $i=0,$ suppose that $Y=(x^-_{p_1,n}\otimes t^{2s_1+1})\cdots (x^-_{p_k, n}\otimes t^{2s_k+1})$ where $p_1\le \cdots\le p_k\le n.$ Then $Yw_r$ is in the $\lie g_0[t^2]$--module generated by elements $Zw_r$ where $Z$  is a monomial  in the elements $(x^-_{n}\otimes t^{2s+1})$ with $s\in \bz_+$.

\end{enumerit}

  \begin{pf} First, let $1 \leq i \leq n-2$. The proof proceeds by an induction on $k$. If $k=1$ and $p_1<i$ then  by setting $$ x^-_{p_1,n} = [ x^-_{p_1, i - 1} , x^-_{i,n} ] $$ we have $$x^-_{p_1, i-1}(x^-_{i,n}\otimes t^{2s_1+1})w_r=  (x^-_{p_1,n}\otimes t^{2s_1+1})w_r,$$  hence induction begins.  For the inductive step,  we  observe that $$(x^-_{p_1,n}\otimes t^{2s_1+1})\bu(\lie g_0[t^2])\subset \bu(\lie g_0[t^2])\oplus\sum_{m\ge 0}\sum_{p=1}^n\bu(\lie g_0[t^2])(x^{\pm}_{p,n}\otimes t^{2m+1}),$$  and hence it suffices to  prove that for all $1\le p\le n$ and $Z$ a monomial in   $(x^-_{i,n}\otimes t^{2s+1})$ we have that $(x^{\pm}_{p,n}\otimes t^{2m+1})Zw_r$ is in the $\lie g_0[t^2]$--submodule generated by elements $Z'w_r$ where $Z'$  is a monomial in   $(x^-_{i,n}\otimes t^{2s+1})$.  Denote this submodule by $M$. We  give the proof only for $(x^{-}_{p,n}\otimes t^{2m+1})Zw_r$, since the other case is proven similarly. If $p=i$, there is nothing to prove and if $p>i$ we get
$$(x^{-}_{p,n}\otimes t^{2m+1})Zw_r=X+Z(x^{-}_{p,n}\otimes t^{2m+1})w_r,$$
for some element $X\in M$. Since $(x^{-}_{p,n}\otimes t^{2m+1})w_r=0$ by \eqref{derfa}, we are done. If $p<i$, we 
consider $$(x^-_{p, i-1} \otimes t^{2m})(x^-_{i,n} \otimes t)^{\ell + 1}w_r = A(x^-_{p,n} \otimes t^{2m+1})(x^-_{i,n} \otimes t)^{\ell}w_r + B(x^-_{p , \bar i} \otimes t^{2m+2})(x^-_{i,n} \otimes t)^{\ell - 1}w_r,$$
for some non--zero constants $A$ and $B$. Since $$ (x^-_{p, i-1} \otimes t^{2m})(x^-_{i,n} \otimes t)^{\ell + 1}w_r \in M, $$ and $$ (x^-_{p , \bar i} \otimes t^{2m+2})(x^-_{i,n} \otimes t)^{\ell - 1}w_r \in M ,$$ we have, $$(x^-_{p,n} \otimes t^{2m+1})(x^-_{i,n} \otimes t)^{\ell}w_r \in M.$$ In order to show $$(x^-_{p, n} \otimes t^{2m+1})(x^-_{i,n} \otimes t^{2r_1+1}) \cdots (x^-_{i,n} \otimes t^{2r_{\ell} + 1})  w_r \in M$$  we let $h \in \lie h$ with $[h, x^-_{p,n}] =0$ and $[h, x^-_{i,n}] \ne 0.$ Then  $$ (h \otimes t^{2s})(x^-_{p, n} \otimes t^{2m+1})(x^-_{i,n} \otimes t) \cdots (x^-_{i,n} \otimes t)  w_r \in M$$ for all $s \geq 0.$ An induction on $|\{ 1 \leq s \leq \ell \ : \ r_s \ne 0  \}|$ finishes the proof for $1 \leq i \leq n-2$.  The $i=0$ case is identical.

\end{pf}
\end{lem}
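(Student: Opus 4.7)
The plan is to prove both parts by induction on the length $k$ of the monomial $Y$, exploiting the bracket identity
$$x^-_{p,n} = \pm [x^-_{p,q-1},\, x^-_{q,n}],\qquad p < q \le n,$$
where $x^-_{p,q-1}$ is a long-root vector and hence lies in $\lie g_0$. For part (i) this will be used with $q=i$, and for part (ii) with $q=n$, so that in each case there is a ``reference'' short root $x^-_{i,n}$ (resp.~$x^-_n$) from which every other short vector $x^-_{p,n}$ is recoverable by the adjoint action of $\lie g_0$.

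For the base case $k=1$ of part (i), if $p_1 < i$ I observe that $x^-_{p_1,i-1}w_r=0$ (this follows from the defining relations $x^-_p w_r = 0$ for $p \ne i$ by a standard $\lie{sl}_{i-p_1+1}$--argument on the integrable $\lie g_0$--module generated by $w_r$), so the bracket identity yields
$$(x^-_{p_1,n}\otimes t^{2s_1+1})w_r = \pm\, x^-_{p_1,i-1}\cdot(x^-_{i,n}\otimes t^{2s_1+1})w_r\in M,$$
where $M$ denotes the $\lie g_0[t^2]$--submodule generated by all $Zw_r$ with $Z$ a monomial in $(x^-_{i,n}\otimes t^{2s+1})$, $s\in\bz_+$. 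For the inductive step I need to commute the leftmost factor $(x^-_{p_1,n}\otimes t^{2s_1+1})$ past the $\lie g_0[t^2]$--action that places the shorter monomial $Y'w_r$ into $M$. The crucial structural point is that brackets $[x^-_{p_1,n}\otimes t^{2s_1+1},\, u\otimes t^{2m}]$ with $u\in \lie g_0$ are again short-root vectors of odd degree, of the form $x^{\pm}_{p,n}\otimes t^{2(s_1+m)+1}$. This yields the inclusion
$$(x^-_{p_1,n}\otimes t^{2s_1+1})\cdot \bu(\lie g_0[t^2])\subset \bu(\lie g_0[t^2])+\sum_{m\ge 0}\sum_{p=1}^n \bu(\lie g_0[t^2])\,(x^{\pm}_{p,n}\otimes t^{2m+1}),$$
which reduces the problem to showing that $(x^{\pm}_{p,n}\otimes t^{2m+1})Zw_r\in M$ for every monomial $Z$ in $(x^-_{i,n}\otimes t^{2s+1})$.

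I would then split the reduced claim on $p$. The case $p=i$ is immediate. For $p>i$, Lemma~\ref{teil1} supplies $(x^-_{p,n}\otimes t^{2m+1})w_r=0$, and the $x^+$ case is handled by commuting past $Z$. For $p<i$, I would apply $(x^-_{p,i-1}\otimes t^{2m})\in \lie g_0[t^2]$ to elements $(x^-_{i,n}\otimes t)^{\ell+1}w_r\in M$; expanding via commutators extracts the desired $(x^-_{p,n}\otimes t^{2m+1})(x^-_{i,n}\otimes t)^{\ell}w_r$ together with an error term of the form $(x^-_{p,\bar i}\otimes t^{2m+2})(x^-_{i,n}\otimes t)^{\ell-1}w_r$ in which $x^-_{p,\bar i}$ is a long-root vector and the tensor exponent is even, so that the error lies in $\lie g_0[t^2]\cdot M\subset M$. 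Finally, to upgrade from the special monomial $(x^-_{i,n}\otimes t)^{\ell}$ to a general monomial $(x^-_{i,n}\otimes t^{2r_1+1})\cdots(x^-_{i,n}\otimes t^{2r_\ell+1})$, I would fix $h\in\lie h$ with $[h,x^-_{p,n}]=0$ and $[h,x^-_{i,n}]\ne 0$, act by $(h\otimes t^{2s})\in \lie g_0[t^2]$ to shift exponents, and carry out a secondary induction on $|\{\,s:r_s\ne 0\,\}|$.

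Part (ii), in which $\lambda = r\lambda_0$ and the target submodule is generated by monomials in $(x^-_n\otimes t^{2s+1})$, proceeds by the same template with $x^-_{i,n}$ replaced by $x^-_n$ and $x^-_{p,i-1}$ replaced by the long-root vector $x^-_{p,n-1}\in\lie g_0$. The main technical obstacle in both parts is the $p<i$ (resp.\ $p<n$) sub-case of the reduced claim: one must track the $B_n$--root-system data carefully enough to ensure the commutator expansion produces exactly one short-root term we want, plus long-root error terms of even $t$--degree that are absorbed into $\lie g_0[t^2]\cdot M$, rather than cascading into further odd-degree short-root terms that would break the induction.
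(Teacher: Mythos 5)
Your proposal follows the paper's own argument essentially step for step: the same induction on $k$, the same base case via $x^-_{p_1,n}=[x^-_{p_1,i-1},x^-_{i,n}]$, the same key inclusion of $(x^-_{p_1,n}\otimes t^{2s_1+1})\bu(\lie g_0[t^2])$ reducing to the three cases $p=i$, $p>i$, $p<i$, the same commutator expansion producing the long-root error term $(x^-_{p,\bar i}\otimes t^{2m+2})$ of even degree, and the same $(h\otimes t^{2s})$ trick with a secondary induction on the number of nonzero exponents. The argument is correct and matches the paper's proof; the only (welcome) addition is that you make explicit the vanishing $x^-_{p_1,i-1}w_r=0$ that the paper leaves implicit in the base case.
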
   
  
  \subsection{} Observe that the Lie subalgebra $\lie a[t^2]$ generated  by the elements $x^\pm_i\otimes t^{2s}$, $s\in\bz_+$ is isomorphic to the current algebra $\lie{sl}_2[t^2]$. Hence $\bu(\lie a[t^2])w_r\subset W_{\loc}(r
  \lambda_i)$ is a quotient of the local Weyl module for $\lie a[t^2]$ with highest weight $r$ and we can use the results of Proposition \ref{locweylcurrent}(i).
  We now prove, 
  \begin{lem}\begin{enumerit}

\item[(i)] For $1 \leq i \leq n-2,$ as a $\lie g_0[t^2]$--module $W_{\loc}(r\lambda_i)$ is spanned by $w_r$ and  elements $$Y(i, \bos )w_r:=(x^-_{i,n}\otimes t^{2s_1+1})\cdots (x^-_{i,n}\otimes t^{2s_k+1})w_r,\ \ k\ge 1, \ \ \bos\in\bz_+^k,\ \ 0\le s_1\le \cdots\le s_k\le r-k.$$
\item[(ii)] For $i =0,$ as a $\lie g_0[t^2]$--module $W_{\loc}(r\lambda_i)$ is spanned by $w_r$ and  elements $$Y(n, \bos )w_r:=(x^-_{n}\otimes t^{2s_1+1})\cdots (x^-_{n}\otimes t^{2s_k+1})w_r,\ \ k\ge 1, \ \ \bos\in\bz_+^k,\ \ 0\le s_1\le \cdots\le s_k\le r-k.$$ \end{enumerit} \end{lem}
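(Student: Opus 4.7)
The lemma strengthens Lemma \ref{teil2} in two ways: the exponents $s_1, \ldots, s_k$ may be arranged nondecreasingly and the largest exponent satisfies $s_k \leq r - k$. Refinement (a) is immediate, since $[x^-_{i,n}, x^-_{i,n}] = 0$ in part (i) (respectively $[x^-_n, x^-_n] = 0$ in part (ii)) makes the operators $(x^-_{i,n} \otimes t^{2a+1})$ (respectively $(x^-_n \otimes t^{2a+1})$) pairwise commute in $\bu(\lie g[t]^\tau)$, so any monomial from Lemma \ref{teil2} can be reordered accordingly.

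The substantive content is refinement (b), which proceeds by importing the Chari--Pressley bound from Proposition \ref{locweylcurrent}(i) through an $\lie{sl}_2[t^2]$-subalgebra of $\lie g_0[t^2]$ and a bridge commutator. For part (i), let $\lie a \subset \lie g_0$ be the $\lie{sl}_2$-triple at $\alpha_i$; then $\lie a \otimes \bc[t^2] \cong \lie{sl}_2[u]$ (with $u = t^2$) sits inside $\lie g_0[t^2] \subset \lie g[t]^\tau$, and $w_r$ is a highest weight vector of weight $r$. Proposition \ref{locweylcurrent}(i) then gives $(x^-_i \otimes t^{2m}) w_r = 0$ for $m \geq r$, and Garland's identity reduces longer products with maximal exponent $> r - k$ to admissible form. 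The bridge to $x^-_{i,n}$ is the identity
\[
[x^-_i \otimes t^{2m},\, x^-_{i+1,n} \otimes t^{2s+1}] = c\,(x^-_{i,n} \otimes t^{2(m+s)+1}),
\]
which, combined with $(x^-_{i+1,n} \otimes t^{2s+1}) w_r = 0$ from Lemma \ref{teil1}(i), translates reductions among $x^-_i$-operators into reductions among $x^-_{i,n}$-operators. For part (ii), we use the analogous $\lie{sl}_2$-triple $\lie a_0$ at $\alpha_0 = \alpha_{n-1} + 2\alpha_n \in \Delta_0$, which lies in $\lie g_0$; the vector $w_r$ has $\lie a_0$-weight $r\lambda_0(h_0) = r$, and the relevant bridge is
\[
[x^+_{n-1,n} \otimes t,\, x^-_0 \otimes t^{2m}] = c\,(x^-_n \otimes t^{2m+1}),
\]
together with the vanishing $(x^+_{n-1,n} \otimes t) w_r = 0$.

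Given a monomial $Y(i,\bos) w_r$ with $s_k > r - k$, one substitutes the bridge identity into an appropriate factor and transports the resulting $(x^-_i \otimes t^{2m})$ to act directly on $w_r$ using $[x^-_i, x^-_{i,n}] = 0$. The commutators that arise from moving $(x^-_{i+1,n} \otimes t^{2s+1})$ through the remaining $(x^-_{i,n} \otimes t^{2s_j+1})$ factors lie in $\lie g_0[t^2]$, since $[x^-_{i,n}, x^-_{i+1,n}]$ is a long-root vector in $R_0$, which keeps the output inside the required $\bu(\lie g_0[t^2])$-span of shorter admissible monomials. A descending induction on $s_k - (r-k)$ combined with an induction on $k$, invoking the Garland bound on $(x^-_i \otimes t^{2m}) w_r$ at the final step, then closes the argument. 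The principal difficulty is choosing the parameter $m$ and organizing the bookkeeping so that each reduction genuinely produces monomials with strictly fewer factors or strictly smaller maximal exponent; part (ii) proceeds similarly with $\lie a_0$ and $x^-_0$ playing the roles of $\lie a_i$ and $x^-_i$.
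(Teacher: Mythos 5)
Your overall strategy is the one the paper uses: reduce to monomials in the commuting operators $(x^-_{i,n}\otimes t^{2s+1})$ (via Lemmas \ref{teil1} and \ref{teil2}), use the commutator with $x^-_{i+1,n}\otimes t$ (resp.\ $x^+_{n-1,n}\otimes t$ when $i=0$) as a bridge to the copy of $\lie{sl}_2[t^2]$ generated by the $x^\pm_i\otimes t^{2s}$ (resp.\ $x^\pm_0\otimes t^{2s}$), and import the spanning set of Proposition \ref{locweylcurrent}(i). The reordering observation, the two bridge commutators, and the identification of the relevant $\lie{sl}_2$--weight as $r$ in both cases are all correct.

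However, the transport mechanism you describe does not deliver the bound $s_k\le r-k$. Substituting the bridge identity into a single factor and letting the resulting $(x^-_i\otimes t^{2m})$ fall on $w_r$ only exploits the length--one vanishing $(x^-_i\otimes t^{2m})w_r=0$ for $m\ge r$; this yields at best $s_k\le r-1$, which is just the $k=1$ case (and it also leaves unvanquished the leading term in which $(x^-_{i+1,n}\otimes t^{2s+1})$ ends up on the far left rather than acting on $w_r$). For $k\ge 2$ the constraint $s_k\le r-k$ comes from straightening the \emph{entire} product $(x^-_i\otimes t^{2s_1})\cdots(x^-_i\otimes t^{2s_k})w_r$ inside the $\lie{sl}_2[t^2]$ local Weyl module, and to use that you must first rewrite the whole length--$k$ monomial $Y(i,\bos)w_r$ in this form. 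This is exactly what the paper's identity \eqref{idenin} accomplishes: it shows $(x^-_{i+1,n}\otimes t)^{k}(x^-_i\otimes t^{2s_1})\cdots(x^-_i\otimes t^{2s_k})w_r = A\,Y(i,\bos)w_r + Xw_r$ with $A\ne 0$ and $X$ a $\bu(\lie g_0[t^2])$--combination of strictly shorter monomials $Y(i,\bop)$, the remaining terms being killed because they end in some $Y(i+1,\bos')w_r=0$. One then straightens the $\lie{sl}_2$--product via Proposition \ref{locweylcurrent}(i), applies $(x^-_{i+1,n}\otimes t)^{k}$ to the straightened expression, and invokes \eqref{idenin} again to land back on admissible $Y(i,\bos')w_r$ modulo shorter monomials handled by the induction on $k$. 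The ``bookkeeping difficulty'' you flag at the end is precisely this missing identity; without it (or an equivalent global conversion) the reduction does not close. The same remark applies verbatim to part (ii) with $(x^+_{n-1,n}\otimes t)^{k}$ and the operators $(x^-_0\otimes t^{2s})$.
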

  \begin{pf} First, we consider the case $1 \leq i \leq n-2$. By Lemma~\ref{teil1} and  Lemma~\ref{teil2} we can suppose that $Y$ is an arbitrary monomial in the elements $(x^-_{i,n}\otimes t^{2s+1})$, $s\in\bz_+$. We proceed by induction on the length $k$ of $Y$. If $k=1$, then we have $$(x^-_{i,n}\otimes t^{2s+1})w_r=(x^-_{i+1, n}\otimes t)(x^-_i\otimes t^{2s})w_r=0, \ \ s\ge r,$$  by Proposition \ref{locweylcurrent}(i). This shows that induction begins.  Suppose now that $k$ is arbitrary and $\bos\in\bz_+^k$. Then,  by induction on $k$ \begin{equation} \label{idenin}(x^-_{i+1,n}\otimes t)^{k}(x^-_i\otimes t^{2s_1})\cdots (x^-_i\otimes t^{2s_k}) = A (x^-_{i,n}\otimes t^{2s_1+1})\cdots (x^-_{i,n}\otimes t^{2s_k+1}) +X  + Z,  \end{equation} where $A$ is a non--zero complex number and $X\in\sum_{m<k}\sum_{\bop\in\bz_+^{m}}\bu(\lie g_0[t^2])Y(i,\bop)$,  and $Z \in \bu(\lie g[t]^{\tau}) Y(i+1,\bos')$ and so $Zw_r=0$ .

 To see \eqref{idenin} we proceed by induction on $k$.  For the base case, we have $$(x^-_{i+1, n} \otimes t)(x^-_{i} \otimes t^{2s_1}) = (x^-_{i,n} \otimes t^{2s_1+1}) + (x^-_i \otimes t^{2s_1})(x^-_{i+1,n} \otimes t), $$ so induction begins. For the inductive step, we have $$(x^-_{i+1, n} \otimes t)^k(x^-_i \otimes t^{2s_1}) \cdots (x^-_i \otimes t^{2s_k}) = (x^-_{i+1, n} \otimes t)^{k-1}(x^-_{i+1, n} \otimes t)(x^-_i \otimes t^{2s_1}) \cdots (x^-_i \otimes t^{2s_k}) $$ $$ = (x^-_{i+1, n} \otimes t)^{k-1}\sum_{m=1}^k (x^-_i \otimes t^{2s_1}) \cdots \widehat{(x^-_{i} \otimes t^{2s_m})} \cdots(x^-_i \otimes t^{2s_k})(x^-_{i,n} \otimes t^{2s_m+1}) . $$ Applying the inductive hypothesis finishes the proof of \eqref{idenin}.

 To finish the proof of the Lemma for $1 \leq i \leq n-2$, we use \eqref{idenin} to write $$(x^-_{i,n} \otimes t^{2s_1+1}) \cdots (x^-_{i,n} \otimes t^{2s_k+1})w_r = (x^-_{i+1, n} \otimes t)^k(x^-_i \otimes t^{2s_1}) \cdots (x^-_i \otimes t^{2s_k})w_r - Xw_r.$$ The inductive hypothesis applies to $Xw_r$. By Proposition \ref{locweylcurrent} we can write $$ (x^-_{i+1, n} \otimes t)^k(x^-_i \otimes t^{2s_1}) \cdots (x^-_i \otimes t^{2s_k})w_r $$  as a linear combination of elements where $s_p\le r-k$.  Applying \eqref{idenin} once again to each summand finishes the proof for $1 \leq i \leq n-2$. 

The case $i=0,$ is similar, using the identity $$(x^-_{n}\otimes t^{2s+1})w_r=(x^+_{n-1, n}\otimes t)(x^-_0\otimes t^{2s})w_r=0, \ \ s\ge r,$$ for the induction to begin, and 
  $$(x^+_{n-1,n}\otimes t)^{k}(x^-_0\otimes t^{2s_1})\cdots (x^-_0\otimes t^{2s_k})w_r = A (x^-_{n}\otimes t^{2s_1+1})\cdots (x^-_{n}\otimes t^{2s_k+1})w_r $$ for the inductive step.

\end{pf}

  \subsection{} We now prove \propref{fundmult}, first for $1 \leq i \leq n-2.$ Fix an ordering on the elements $Y(i,\bos)w_r$, $\bos\in\bz_+^k$ and $s_p\le r-k$ as follows: the first element is $w_r$ and an element $Y(i,\bos)$ precedes $Y(i,\bos')$ if $\bos\in\bz_+^k$ and $\bos'\in\bz_+^m$ if either $k<m$ or $k=m$ and $s_1+\cdots + s_k>s_1'+\cdots+s_k'$ and let $u_1,\dots, u_\ell$ be an ordered enumeration of this set.
  Denote by $U_p$ the $\lie g_0[t^2]$--submodule of $W_{\loc}(r\lambda_i)$ generated by the elements $u_m$, $m\le p$. It is straightforward to see that we have an increasing filtration of $\lie g_0[t^2]$--modules:$$0=U_0\subset U_1\subset\cdots\subset U_\ell= W_{\loc}(r\lambda_i).$$ Moreover $U_p/U_{p-1}$ is a quotient of the local Weyl module for $\lie g_0[t^2]$ with highest weight $(r-i_p)\omega_i+i_p\omega_{i-1}$ (we understand $\omega_0=0$), if $u_p= Y(i,\bos)$, $\bos\in \bz_+^{i_p}$. Using equation \eqref{untw} and Proposition \ref{locweylcurrent}(ii) we get
$$\dim U_p/U_{p-1}\le \left(\sum_{s=0}^i\binom{2n-1}{s}\right)^{r-i_p}\left(\sum_{s=0}^{i-1}\binom{2n-1}{s}\right)^{i_p}.$$  Summing we get
\begin{align*}\dim W_{\loc}(r\lambda_i)& \le \sum_{s=0}^r\binom{r}{s}\left(\sum_{s=0}^i\binom{2n-1}{s}\right)^{r-s}\left(\sum_{s=0}^{i-1}\binom{2n-1}{s}\right)^{s}&\\&
=\left(\binom{2n}{i}+\binom{2n}{i-1}+\cdots+\binom{2n}{1}\right)^{r}.\end{align*} 
For the $i=0$ case, $U_p/U_{p-1}$ is a submodule of the local Weyl module for $\lie g_0[t^2]$ with highest weight $(r-2i_p)\omega_n+i_p\omega_{n-1}=(r-i_p)\lambda_0+i_p\lambda_{n-1}$, if $u_p= Y(n,\bos)$, $\bos\in \bz_+^{i_p}$. Using equation \eqref{untw} and Proposition \ref{locweylcurrent}(iii) we get
  $$\dim U_p/U_{p-1}\le (2^{n-1})^{r-i_p}(2^{n-1})^{i_p}.$$  Summing we get
  $$\dim W_{\loc}(r\lambda_i)\le \sum_{s=0}^r\binom{r}{s}(2^{n-1})^{r-s}(2^{n-1})^{s}= (2^{n-1}+2^{n-1})^r = (2^n)^r .$$  Since we have already proved that the reverse equality holds the proof of \propref{fundmult} is complete

\subsection{Concluding Remarks}\label{notfreealambda}
We discuss briefly the structure of the local Weyl modules when $\lambda\in P_0^+$ is not a  multiple of a fundamental weight and  such that $\ba_\lambda$ is a proper quotient of a polynomial algebra.  The simplest example is the case of $(B_3, D_3)$ and $\lambda=\lambda_0+\lambda_2$, 
where we have  $$\ba_{\lambda} = \mathbb{C}[P_{2,1}, P_{3,1}]/(P_{2,1}P_{3,1}). $$ Given  $a \in \mathbb{C}^{\times}$ let $\bi_{(a,0)}$ denote the maximal ideal corresponding to $(P_{2,1} -a, P_{3,1})$ and for $b \in \mathbb{C}$ $\bi_{(0,b)}$ denote the maximal ideal corresponding to $(P_{2,1} , P_{3,1} - b)$. In the first case, the local Weyl module $W(\lambda, \bi_{(a,0)} )$ is a pullback of a local Weyl module for the current algebra $\lie g [t]$ and so $$ \text{dim }W(\lambda, \bi_{(a,0)}) = 22. $$ In the second case  the local Weyl module  $W(\lambda, \bi_{(0,b)} )$ is an extension of the  pullback of a local Weyl module for the current algebra by an irreducible $\lie g_0$--module, and it can be shown that $$ \text{dim }W(\lambda, \bi_{(0,b)}) = 32. $$ (see \cite{mattthesis} for details).  In particular the dimension of the local Weyl module depends on the choice of the ideal and hence the global Weyl module is not projective and hence not free as an $\ba_{\lambda}$--module. 
However, we observe the following: If we decompose the variety corresponding to $\ba_{\lambda}$ into irreducible components $X_1 \cup X_2$, where
$$X_1=\{(a,0) : a\in \bc\},\ \ X_2=\{(0,b) : b\in \bc\},$$
we see that the dimension of the local Weyl module is constant along $X_2$. So pulling back $W(\lambda)$ via the algebra map
$$\varphi:\ba_{\lambda}\rightarrow \ba_{\lambda},\ P_{2,1}\mapsto 0,\ P_{3,1}\mapsto P_{3,1}$$
we see that $\varphi^{*} W(\lambda)$ is a free $\mathbb{C}[P_{3,1}]$--module, where we view $\mathbb{C}[P_{3,1}]$ as the coordinate ring of $X_2$.
In general, preliminary calculations do show that in the case when $\ba_{\lambda}$ is a Stanley--Reisner ring there are only finitely many possible dimensions and that the dimension is constant along a suitable irreducible subvariety, i.e. the global Weyl module is free considered as a module for the coordinate ring $\mathcal{O}(X)$ of a suitable irreducible subvariety $X$.

\bibliographystyle{plain}
\bibliography{bibfile}

\end{document}